\documentclass[10pt]{article}
\usepackage{amsthm}
\usepackage{amsmath}
\usepackage{amssymb}
\usepackage{makeidx}

\linespread{1}
\setlength{\textwidth}{17.1cm}
\setlength{\textheight}{21.6cm}
\setlength{\oddsidemargin}{0cm}
\setlength{\evensidemargin}{0cm}

\theoremstyle{definition}
\newtheorem{definition}{Definition}[section]

\newtheorem{example}[definition]{Example}

\newtheorem{remark}[definition]{Remark}

\theoremstyle{plain}
\newtheorem{theorem}[definition]{Theorem}
\newtheorem{lemma}[definition]{Lemma}
\newtheorem{proposition}[definition]{Proposition}
\newtheorem{corollary}[definition]{Corollary}

\numberwithin{equation}{section}

\def\N{{\mathbb N}}

\begin{document}
\title{Taking Prime, Maximal and Two--class Congruences\\ Through Morphisms}
\author{Claudia MURE\c SAN\thanks{\small Dedicated to the memory of my dear grandmother, Floar\u a--Marioara Mure\c san}\\ \footnotesize University of Bucharest\\ \footnotesize Faculty of Mathematics and Computer Science\\ \footnotesize Academiei 14, RO 010014, Bucharest, Romania\\ \footnotesize E--mails: c.muresan@yahoo.com, cmuresan@fmi.unibuc.ro}
\date{\today }
\maketitle

\begin{abstract} In this paper we study prime, maximal and two--class congruences from the point of view of the relationships between them in various kinds of universal algebras, as well as their direct and inverse images through morphisms. This research has also produced a set of interesting results concerning the prime and the maximal congruences of several kinds of lattices.\\ {\em 2010 Mathematics Subject Classification:} Primary: 08A30; secondary: 08B10, 03G10, 06B10.\\ {\em Key words and phrases:} congruence--modularity, congruence--di\-stri\-bu\-ti\-vi\-ty, commutator, prime congruence, maximal congruence, (strictly) meet--irreducible element, subdirectly irreducible algebra.\end{abstract}

\section{Introduction}
\label{introduction}

In this paper, we study prime and maximal congruences in various kinds of algebras; we are interested in the cardinalities of the quotient algebras through these congruences and in the direct and inverse images of these congruences through morphisms. For the properties we obtain, we provide examples in lattices. We also prove a series of results concerning prime and maximal congruences in some classes of lattices.

The paper is structured in nine sections. In Section \ref{preliminaries}, we recall some previously known results from lattice theory, universal algebra and commutator theory; the results in the following sections are new, with the only exceptions of the results cited from other works and some of those in the final section, which we relate to the present context and derive from the other results we have obtained here; we also acknowledge that the characterizations for the primality of congruences which we have obtained in Section \ref{primired} are, up to a point, similar to the one from \cite{agl}.

Section \ref{primired} is concerned with some characterizations for prime congruences which serve us in the following sections for determining the prime congruences of the lattices in the different examples, and in the final section for some results on subdirectly irreducible algebras.

In Section \ref{admmorf}, we introduce two important types of morphisms that we study in the following sections: admissible and Max--admissible morphisms, defined by the property that the inverse images of prime, respectively maximal congruences through these morphisms are again prime, respectively maximal congruences. Then we provide some examples, which we also use in the sections which follow. The necessity for the study of admissible morphisms has appeared in the work for \cite{gulo}, and the related notion of Max--admissible morphisms naturally occurrs. In the following sections, we cite \cite{gulo} for several results concerning admissible morphisms.

In Section \ref{dirprodordsum}, we determine the prime, maximal and two--class congruences in direct products of algebras and finite ordinal sums of bounded lattices from the prime, maximal and two--class congruences of the terms of these direct products and ordinal sums, and prove that finite direct products and finite ordinal sums preserve the admissibility and Max--admissibility of morphisms.

In Section \ref{speclat}, we establish the relationships between the sets of the prime, maximal and two--class congruences in certain kinds of universal algebras and lattices, for the purpose of further studying admissible and Max--admissible morphisms based on these relationships.

In Section \ref{throughmorph}, we obtain several results on cardinalities of quotient algebras through congruences, for certain kinds of congruences, and in relation to the cardinalities of the quotient algebras through the direct and the inverse images of those congruences through morphisms. Then we use these results, as well as those from Section \ref{speclat}, to determine classes of algebras in which all morphisms are admissible and/or Max--admissible, as well as kinds of morphisms that are always admissible and/or Max--admissible, classified by the structures of their domain and their co--domain. 

In Section \ref{moreon(max)adm}, we prove other conditions which ensure the admissibility and/or Max--admissibility of morphisms, out of which we mention that surjectivity implies admissibility and Max--admissibility, but the converse does not hold. We also show that the study of admissibility and Max--admissibility reduces to embeddings, and prove that admissibility and Max--admissibility are preserved by quotients.

Section \ref{subdirirred} concludes the present paper, by some simple applications of the above to subdirect irreducibility of algebras; some of the results in this section are known; we just show how they can be derived from the previous results in this article.

\section{Preliminaries}
\label{preliminaries}

In this section, we recall some properties of equivalence relations, lattices, morphisms and congruences, and the commutator in congruence--modular varieties, which we need for making this paper self--contained. For a further study of the results on lattices that we point out here and those we shall recall in the following sections, we refer the reader to \cite{bal}, \cite{birkhoff}, \cite{blyth}, \cite{cwdw}, \cite{gratzer}, \cite{schmidt}; for the notions on universal algebras, we recommend \cite{bur}, \cite{gralgu}; for the results from commutator theory, see \cite{agl}, \cite{fremck}, \cite{koll}, \cite{owe}, \cite{urs}.

We shall denote by $\N $ the set of the natural numbers and by $\N ^*=\N \setminus \{0\}$. Let $M$ be a set. We shall denote by $|M|$ the cardinality of $M$, by ${\cal P}(M)$ the set of the subsets of $M$, by ${\rm Eq}(M)$ the set of the equivalences on $M$, by $\Delta _M=\{(x,x)\ |\ x\in M\}\in {\rm Eq}(M)$ and by $\nabla _M=M^2\in {\rm Eq}(M)$; for any $\theta \in {\rm Eq}(M)$, any $a\in M$ and any $S\subseteq M$, $a/\theta $ will denote the equivalence class of $a$ with respect to $\theta $, $S/\theta =\{x/\theta \ |\ x\in S\}$ and $p_{\theta }:M\rightarrow M/\theta $ shall be the canonical surjection. For any partition $\pi $ of $M$, we shall denote by $eq(\pi )$ the equivalence on $M$ which corresponds to $\pi $; thus we have $M/eq(\pi )=\pi $; if $\pi $ is finite, say $\pi =\{M_1,\ldots ,M_n\}$ for some $n\in \N ^*$, then we denote $eq(M_1,\ldots ,M_n)=eq(\pi )$. For any cardinal number $\kappa $, we shall denote by ${\rm Eq}_{\kappa }(M)=\{\theta \in {\rm Eq}(M)\ |\ |M/\theta |=\kappa \}$.

Let $I$ be a non--empty set, $(A_i)_{i\in I}$ and $(B_i)_{i\in I}$ be families of sets, $\displaystyle A=\prod _{i\in I}A_i$, $\displaystyle B=\prod _{i\in I}B_i$ and $f_i:A_i\rightarrow B_i$ for all $i\in I$. Then $\displaystyle f=\prod _{i\in I}f_i:A\rightarrow B$ shall have the usual componentwise definition. If $I=\overline{1,n}$ for some $n\in \N ^*$ and $f_1=\ldots =f_n=h$, then we denote $\displaystyle \prod _{i=1}^nf_i=h^n$. For any $S\subseteq A$, by $a=(a_i)_{i\in I}\in S$ we mean $a_i\in A_i$ for all $i\in I$, such that $a\in S$. If $R_i\subseteq A_i^2$ for all $i\in I$, then we denote by $\displaystyle \prod _{i\in I}R_i=\{((a_i)_{i\in I},(b_i)_{i\in I})\ |\ (\forall \, i\in I)\, ((a_i,b_i)\in R_i)\}$: direct product of binary relations. Clearly, if $\theta _i\in {\rm Eq}(A_i)$ for all $i\in I$, then $\displaystyle \prod _{i\in I}\theta _i\in {\rm Eq}(A)$.

Now let $M$ and $N$ sets, $h:M\rightarrow N$, $X\subseteq M^2$ and $Y\subseteq N^2$. We denote: $h(X)=h^2(X)=\{(h(a),h(b))\ |\ (a,b)\in X\}\subseteq N^2$ and $h^*(Y)=(h^2)^{-1}(Y)=\{(a,b)\in M^2\ |\ (h(a),h(b))\in Y\}\subseteq M^2$; with the direct images of these functions denoted in the usual way, it is straightforward that $h({\rm Eq}(M))\subseteq {\rm Eq}(h(M))$ and $h^*({\rm Eq}(N))\subseteq {\rm Eq}(M)$. We also denote by ${\rm Ker}(h)=\{(a,b)\in M^2\ |\ h(a)=h(b)\}=h^*(\Delta _N)\in {\rm Eq}(M)$: the {\em kernel} of $h$. It is immediate that $h(h^*(Y))=Y\cap h(M^2)=Y\cap h(\nabla _M)$, thus, if $h$ is surjective, then $h(h^*(Y))=Y$; therefore, if $h$ is surjective, then $h^*(Y)=X$ implies $h(X)=Y$, so $h^*$ is injective. If $h$ is injective, then $h^*(h(X))=X$, so $h^*$ is surjective. Clearly, if $M\subseteq N$ and $i:M\rightarrow N$ is the inclusion map, then ${\rm Ker}(i)=i^*(\Delta _N)=\Delta _M$. For any $\theta \in {\rm Eq}(M)$, we denote by $X/\theta =p_{\theta }(X)=\{(a/\theta ,b/\theta )\ |\ (a,b)\in X\}$.

With the notations above, let $U_i\subseteq A_i$ for all $i\in I$, $V_i\subseteq B_i$ for all $i\in I$, $\displaystyle U=\prod_{i\in I}U_i$, $\displaystyle V=\prod_{i\in I}V_i$, $R_i\subseteq A_i^2$ for all $i\in I$, $S_i\subseteq B_i^2$ for all $i\in I$, $\displaystyle R=\prod_{i\in I}R_i$ and $\displaystyle S=\prod_{i\in I}S_i$ as direct products of binary relations. Then, clearly, $\displaystyle f(U)=\prod_{i\in I}f_i(U_i)$ and $\displaystyle f^{-1}(V)=\prod_{i\in I}f_i^{-1}(V_i)$, hence $\displaystyle f(R)=\prod_{i\in I}f_i(R_i)$ and $\displaystyle f^*(S)=\prod_{i\in I}f_i^*(S_i)$ as direct products of binary relations, thus $\displaystyle {\rm Ker}(f)=f^*(\Delta _B)=f^*(\prod_{i\in I}\Delta _{B_i})=\prod_{i\in I}f_i^*(\Delta _{B_i})=\prod_{i\in I}{\rm Ker}(f_i)$.

Throughout this paper, whenever there is no danger of confusion, any algebra shall be designated by its support set. All algebras shall be considerred non--empty; by {\em trivial algebra} we mean one--element algebra, and by {\em non--trivial algebra} we mean algebra with at least two distinct elements. Any quotient algebra and any direct product of algebras shall be considerred with the operations defined canonically. Sometimes, for brevity, we shall denote by $A\cong B$ the fact that two algebras $A$ and $B$ of the same type are isomorphic.

Let $A$ be an algebra. We shall denote by ${\rm Con}(A)$ the set of the congruences of $A$ and, for any cardinality $\kappa $, by ${\rm Con}_{\kappa }(A)=\{\theta \in {\rm Con}(A)\ |\ |A/\theta |=\kappa \}={\rm Con}(A)\cap {\rm Eq}_{\kappa }(A)$. For each $X\subseteq A^2$, we shall denote by $Cg_A(X)$ the congruence of $A$ generated by $X$; for every $a,b\in A$, $Cg_A(\{(a,b)\})$ is also denoted by $Cg_A(a,b)$ and called the {\em principal congruence} of $A$ generated by $(a,b)$. Let $\phi \in {\rm Con}(A)$; $\phi $ is said to be {\em finitely generated} iff $\phi =Cg_A(X)$ for some finite subset $X$ of $A^2$; $\phi $ is called a {\em proper congruence} of $A$ iff $\phi \neq \nabla _A$. We recall that the {\em maximal congruences} of $A$ are the maximal elements of $({\rm Con}(A)\setminus \{\nabla _A\},\subseteq )$, and that the set of the maximal congruences of $A$ is denoted by ${\rm Max}(A)$. It is well known that $({\rm Con}(A),\vee ,\cup ,\Delta _A,\nabla _A)$ is a bounded lattice, orderred by set inclusion, where, for all $\phi ,\psi \in {\rm Con}(A)$, $\phi \vee \psi =Cg_A(\phi \cup \psi )$; moreover, ${\rm Con}(A)$ is a complete lattice, in which, for any family $(\phi 
_i)\subseteq {\rm Con}(A)$, $\displaystyle \bigvee _{i\in I}\phi _i=Cg_A(\bigcup _{i\in I}\phi _i)$. Obviously, $A$ is non--trivial iff $\Delta _A\neq \nabla _A$.

Throughout this paper, any (strict) order or lattice operation shall be denoted in the usual way, excepting particular cases such as lattices of congruences. Let $L$ be a lattice and $x\in L$. We recall that $x$ is called a {\em prime element} of $L$ iff, for all $a,b\in L$, $a\wedge b\leq x$ implies $a\leq x$ or $b\leq x$; $x$ is said to be {\em meet--irreducible} in $L$ iff, for all $a,b\in L$, $x=a\wedge b$ implies $x=a$ or $x=b$; $x$ is said to be {\em strictly meet--irreducible} iff there exists $\min \{y\in L\ |\ x<y\}$. Whenever $x$ has a unique successor in $L$, we shall denote that unique successor by $x^+$.

\begin{remark} Clearly:\begin{itemize}
\item $x$ is strictly meet--irreducible iff $x$ has a unique successor in $L$, namely $x^+=\min \{y\in L\ |\ x<y\}$;
\item if $x$ is strictly meet--irreducible, then $x$ is meet--irreducible, because, if $a,b\in L$ such that $x=a\wedge b$, so that $x\leq a$ and $x\leq b$, then $x=a$ or $x=b$, because otherwise we would have $x<a$ and $x<b$, thus $x^+\leq a$ and $x^+\leq b$, hence $x^+\leq a\wedge b=x<x^+$, a contradiction;
\item if $L$ has a $1$, then $\{y\in L\ |\ 1<y\}=\emptyset $, which has no minimum, thus $1$ is not strictly meet--irreducible; obviously, $1$ is meet--irreducible.\end{itemize}\label{strictmeetirred}\end{remark}

We shall denote by ${\rm Filt}(L)$, ${\rm Id}(L)$, ${\rm Max}_{\rm Filt}(L)$, ${\rm Max}_{\rm Id}(L)$, ${\rm Spec}_{\rm Filt}(L)$ and ${\rm Spec}_{\rm Id}(L)$ the sets of the filters, ideals, maximal filters, maximal ideals, prime filters and prime ideals of $L$, respectively. For any $X\subseteq L$, $[X)$, respectively $(X]$, shall be the filter, respectively the ideal of $L$ generated by $X$; for any $x\in L$, we shall denote by $[x)=[\{x\})$ and by $(x]=(\{x\}]$. The join in each of the lattices ${\rm Filt}(L)$ and ${\rm Id}(L)$ shall be denoted by $\vee $. If $L$ has a $1$, then $({\rm Filt}(L),\vee ,\cap ,\{1\},L)$ is a complete lattice, while, if $L$ has a $0$, then $({\rm Id}(L),\vee ,\cap ,\{0\},L)$ is a complete lattice. If $L$ is distributive, then $\varphi _L:{\rm Filt}(L)\rightarrow {\rm Con}(L)$ and $\chi _L:{\rm Id}(L)\rightarrow {\rm Con}(L)$ shall be the canonical lattice embeddings: for all $F\in {\rm Filt}(L)$ and all $I\in {\rm Id}(L)$, $\varphi _L(F)=\{(x,y)\in L^2\ |\ (\exists \, a\in F)\, (x\wedge a=y\wedge a)\}$ and $\chi _L(I)=\{(x,y)\in L^2\ |\ (\exists \, a\in I)\, (x\vee a=y\vee a)\}$; it is easy to prove that $F=1/\varphi _L(F)$ and $\varphi _L(F)$ is the smallest congruence of $L$ which has $F$ as a class; the dual goes for $\chi _L$. We recall that, if $L$ is a Boolean algebra, then its congruences coincide to those of its underlying lattice, and $\varphi _L$ and $\chi _L$ are bounded lattice isomorphisms. Note, also, that bounded lattice morphisms between Boolean algebras are Boolean morphisms. It is an immediate consequence of The Prime Filter Theorem that, in any distributive lattice, any proper filter equals the intersection of the prime filters that include it. The dual holds for ideals. We shall abbreviate by {\em ACC} the ascending chain condition for lattices. We shall denote by ${\cal D}$ the diamond, by ${\cal P}$ the pentagon and by ${\cal L}_n$ the $n$--element chain, for any $n\in \N ^*$.

\begin{remark} By \cite[Lemma $6$, p. $19$, and Lemma $7$, p. $20$]{gratzer}, any class of a congruence of a lattice $L$ is a convex sublattice of $L$, thus it has a unique writing as an intersection between a filter and an ideal of $L$. Clearly, if $S$ is a sublattice of the lattice $L$ and $\theta \in {\rm Con}(L)$, then $\theta \cap S^2\in {\rm Con}(S)$.\label{convex}\end{remark}

\begin{remark}{\rm \cite{bal}, \cite{blyth}, \cite{gratzer}, \cite{eudacs15}} Given any lattice $L$:\begin{itemize}
\item for any $\theta \in {\rm Con}(L)$, if $L$ has a $0$, then $0/\theta \in {\rm Id}(L)$, and, if $L$ has a $1$, then $1/\theta \in {\rm Filt}(L)$;
\item the mapping $P\mapsto L\setminus P$ is a bijection between ${\rm Spec}_{\rm Filt}(L)$ and ${\rm Spec}_{\rm Id}(L)$;

\item ${\rm Con}_2(L)=\{eq(P,L\setminus P)\ |\ P\in {\rm Spec}_{\rm Filt}(L)\}$.\end{itemize}\label{con2lat}\end{remark}

\begin{lemma}{\rm \cite{bal}, \cite{blyth}, \cite{gratzer}, \cite{eudacs15}} If $L$ is a chain, then:\begin{itemize}
\item the congruences of $L$ are exactly the equivalences on $L$ whose classes are convex;
\item any convex subset of $L$ is the class of a congruence of $L$;
\item ${\rm Spec}_{\rm Filt}(L)={\rm Filt}(L)\setminus \{\nabla _L\}$ and ${\rm Spec}_{\rm Id}(L)={\rm Id}(L)\setminus \{\nabla _L\}$;
\item for any $\theta \in {\rm Con}(L)$ and any $C,D\in L/\theta $, we have the following equivalences: $(\exists \, x\in C)\, (\exists \, y\in D)\, (x<y)$ iff $(\forall \, x\in C)\, (\forall \, y\in D)\, (x<y)$ iff $C<D$ in the chain $L/\theta $.\end{itemize}\label{chainconvex}\end{lemma}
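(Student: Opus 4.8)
The plan is to handle the four items in turn, using throughout that in a chain $\wedge $ and $\vee $ coincide with $\min $ and $\max $, and that by Remark \ref{convex} every class of a lattice congruence is a convex sublattice. For the \emph{first item}, the forward implication is immediate from Remark \ref{convex}. For the converse, let $\theta \in {\rm Eq}(L)$ have all classes convex; I must verify the substitution property, and by the symmetry of the two operations it suffices to fix $(a,b)\in \theta $ with $a\leq b$ (possible since $L$ is a chain) together with an arbitrary $c\in L$, and to show $(a\wedge c,b\wedge c)\in \theta $ and $(a\vee c,b\vee c)\in \theta $. I would run the short case analysis on the position of $c$ relative to $a\leq b$: if $c\leq a$ then $a\wedge c=b\wedge c=c$; if $c\geq b$ then $a\wedge c=a$ and $b\wedge c=b$, and $(a,b)\in \theta $; and if $a\leq c\leq b$ then $a\wedge c=a$, $b\wedge c=c$, where $c$ lies in the common class of $a$ and $b$ by convexity, giving $(a,c)\in \theta $. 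The joins are handled symmetrically. This is the step that genuinely uses the chain order, and I regard it as the only part needing care.

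For the \emph{second item}, given a convex subset $C\subseteq L$, I would partition $L$ into the three blocks $\{x\in L\ |\ x<c\text{ for all }c\in C\}$, $C$, and $\{x\in L\ |\ x>c\text{ for all }c\in C\}$, discarding any that are empty. The key observation, which follows from $L$ being a chain and $C$ being convex, is that every $x\notin C$ lies entirely below or entirely above $C$; otherwise it would be sandwiched between two elements of $C$ and hence belong to $C$. Each of the three blocks is then convex, so by the first item the associated equivalence is a congruence, and $C$ is one of its classes.

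For the \emph{third item}, prime filters are proper by definition, so the inclusion $\subseteq $ and the exclusion of $\nabla _L$ are clear; it remains to show that every proper filter $F\neq \nabla _L$ of a chain is prime. Given $a\wedge b\in F$, since $a\wedge b=\min (a,b)\in \{a,b\}$, I get $a\in F$ or $b\in F$, so $F$ is prime. The statement for ideals is dual, using $a\vee b=\max (a,b)\in \{a,b\}$.

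For the \emph{fourth item}, I would first note that $L/\theta $ is a chain: $p_{\theta }$ is a surjective lattice morphism, hence order--preserving, and the image of a chain under such a map is totally ordered. The content of the claim concerns distinct classes, so I would assume $C\neq D$ (for $C=D$ the three conditions degenerate, as a non--singleton class makes the first true while the other two fail), and close the cycle as follows. From $x\in C$, $y\in D$ with $x<y$, order--preservation gives $C=x/\theta \leq y/\theta =D$, and $C\neq D$ forces $C<D$. Conversely, if $C<D$ then for arbitrary $x\in C$, $y\in D$ we cannot have $y<x$ (that would yield $D\leq C$) nor $x=y$ (as $C\neq D$), whence $x<y$, which is the universally quantified statement. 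Finally that statement implies the existential one since equivalence classes are non--empty. The main obstacle, as flagged above, is the converse of the first item; the only conceptual subtlety elsewhere is recognizing that the fourth item is intended for distinct classes.
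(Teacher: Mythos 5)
The paper itself gives no proof of Lemma \ref{chainconvex}: it is recalled from \cite{bal}, \cite{blyth}, \cite{gratzer}, \cite{eudacs15} without argument, so your proposal can only be measured against the standard folklore proof, and in structure it matches that proof: Remark \ref{convex} for the easy direction of the first item and a case analysis on the position of $c$ relative to $a\leq b$ for the converse (your implicit reduction of the two--pair substitution property to the one--pair--plus--constant version via transitivity is standard and fine); the three--block partition for the second item (where $C$ is tacitly non--empty, since congruence classes are); and order--preservation of $p_{\theta }$ for the fourth. Your reading of the fourth item as concerning distinct classes is the intended one --- for $C=D$ a non--singleton class the existential condition holds while the other two fail --- and your cycle (existential $\Rightarrow $ $C<D$ $\Rightarrow $ universal $\Rightarrow $ existential) is valid.

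There is, however, one genuine error, in the third item: you have the primality conditions for filters and for ideals interchanged. A filter $F$ is prime when $a\vee b\in F$ implies $a\in F$ or $b\in F$; the condition you verify, namely $a\wedge b\in F\Rightarrow a\in F$ or $b\in F$, is trivially true for \emph{every} filter in \emph{every} lattice, proper or not, since $a\wedge b\leq a$ and $F$ is upward closed, so as written your argument establishes nothing about primality. Symmetrically, your ``dual'' argument for ideals verifies $a\vee b\in I\Rightarrow a\in I$ or $b\in I$, which is automatic from downward closure. The repair is immediate and uses exactly your observation with the two operations swapped: in a chain, $a\vee b=\max (a,b)\in \{a,b\}$, so $a\vee b\in F$ gives $a\in F$ or $b\in F$, hence every proper filter is prime; dually, $a\wedge b=\min (a,b)\in \{a,b\}$ handles prime ideals. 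With that swap, the third item --- and with it the whole proposal --- is correct.
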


An algebra $A$ is said to be {\em congruence--modular}, respectively {\em congruence--distributive}, iff the lattice ${\rm Con}(A)$ is modular, respectively distributive. An equational class ${\cal C}$ is said to be {\em congruence--modular}, respectively {\em congruence--distributive}, iff all algebras from ${\cal C}$ are congruence--modular, respectively congruence--distributive. The class of lattices is congruence--distributive; for instance, that of commutative rings is congruence--modular and it is not congruence--distributive.

Throughout the rest of this paper, ${\cal C}$ shall be an equational class of algebras of the same type, $A$ and $B$ shall be algebras from ${\cal C}$ and $f:A\rightarrow B$ shall be a morphism in ${\cal C}$.

Let us note that, if $I$ is a non--empty set, $(A_i)_{i\in I}$ and $(B_i)_{i\in I}$ are families of algebras in ${\cal C}$ and, for all $i\in I$, $f_i:A_i\rightarrow B_i$, then, clearly: $\displaystyle \prod _{i\in I}f_i$ is a morphism in ${\cal C}$ iff, for all $i\in I$, $f_i$ is a morphism in ${\cal C}$.

It is straightforward that, for any $\psi \in {\rm Con}(B)$, $f^*(\psi )\in {\rm Con}(A)$; thus ${\rm Ker}(f)\in {\rm Con}(A)$; and, for any $\phi \in {\rm Con}(A)$, $f(\phi )\in {\rm Con}(f(A))$; thus, if $f$ is surjective, then $f(\phi )\in {\rm Con}(B)$. It is well known that, for any $\theta \in {\rm Con}(A)$, $p_{\theta }$ is a surjective morphism and the mapping $\gamma \mapsto p_{\theta }(\gamma )=\gamma /\theta $ sets a bounded lattice isomorphism from $[\theta )$ to ${\rm Con}(A/\theta )$, so ${\rm Con}(A/\theta )=\{\gamma /\theta \ | \gamma \in [\theta )\}$ and, for all $\gamma \in [\theta )$, $p_{\theta }^*(p_{\theta }(\gamma ))=p_{\theta }^*(\gamma /\theta )=\gamma $, thus ${\rm Ker}(p_{\theta })=p_{\theta }^*(\Delta _{A/\theta })=p_{\theta }^*(\theta /\theta )=\theta $. Thus, for any $\gamma \in [\theta )$ and any $a,b\in A$, the following hold: $(a/\theta ,b/\theta )\in \gamma /\theta $ iff $(p_{\theta }(a),p_{\theta }(b))\in p_{\theta }(\gamma )$ iff $(a,b)\in p_{\theta }^*(p_{\theta }(\gamma ))$ iff $(a,b)\in \gamma $. Hence, for any $\alpha ,\beta \in [\theta )$: $\alpha /\theta =\beta /\theta $ iff $\alpha =\beta $, and: $\alpha /\theta \subseteq \beta /\theta $ iff $\alpha \subseteq \beta $.

\begin{remark} By the above, for any $\theta \in {\rm Con}(A)$, ${\rm Con}(A/\theta )=\{\psi /\theta \ |\ \psi \in [\theta )\}$, hence: $\theta \in {\rm Max}(A)$ iff $\theta \neq \nabla _A$ and $[\theta )=\{\theta ,\nabla _A\}$ iff $[\theta )\cong {\cal L}_2$ iff ${\rm Con}(A/\theta )=\{\Delta _{A/\theta },\nabla _{A/\theta }\}$ and $\Delta _{A/\theta }\neq \nabla _{A/\theta }$ iff ${\rm Con}(A/\theta )\cong {\cal L}_2$.\label{congrcat}\end{remark}

\begin{theorem}{\rm \cite{fremck}} If ${\cal C}$ is congruence--modular, then, for each member $M$ of ${\cal C}$, there exists a unique binary operation $[\cdot ,\cdot ]_M$ on ${\rm Con}(M)$, called the {\em commutator of $M$}, such that, for all $\alpha ,\beta \in {\rm Con}(M)$, $[\alpha ,\beta ]_M=\min \{\mu \in {\rm Con}(M)\ |\ \mu \subseteq \alpha \cap \beta $ and, for any algebra $N$ from ${\cal C}$ and any surjective morphism $h:M\rightarrow N$, $\mu \vee {\rm Ker}(h)=h^*([h(\alpha \vee {\rm Ker}(h)),h(\beta \vee {\rm Ker}(h))]_N)$.\label{wow}\end{theorem}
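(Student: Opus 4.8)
The plan is to reduce the statement to the standard existence theorem for the modular commutator and then verify that this concrete operation is precisely the one singled out by the stated minimum. First I would invoke the construction of the commutator in congruence--modular varieties via centrality and the term condition: for each member $M$ of ${\cal C}$ and all $\alpha ,\beta \in {\rm Con}(M)$, one defines $[\alpha ,\beta ]_M$ to be the least $\mu \in {\rm Con}(M)$ such that $\beta $ centralizes $\alpha $ modulo $\mu $, the existence of this least element being guaranteed by the closure under intersections of the centralizing relation in a congruence--modular setting. I would take as known the basic properties of this operation: $[\alpha ,\beta ]_M\subseteq \alpha \cap \beta $, additivity in each argument (so in particular monotonicity), and the quotient homomorphism property, namely $[h(\gamma ),h(\delta )]_N=h([\gamma ,\delta ]_M)$ for every surjective morphism $h:M\rightarrow N$ and all $\gamma ,\delta \in {\rm Con}(M)$.

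The core step is to show that this operation lies in the set over which the minimum is taken, i.e. that it satisfies the displayed equation for every surjective $h:M\rightarrow N$ in ${\cal C}$. Writing $\theta ={\rm Ker}(h)$, I would first record the absorption identities $[\alpha ,\theta ]_M,[\theta ,\beta ]_M,[\theta ,\theta ]_M\subseteq \theta $, immediate from $[\cdot ,\cdot ]_M\subseteq \cdot \cap \cdot $. Expanding $[\alpha \vee \theta ,\beta \vee \theta ]_M$ by additivity in both arguments and joining with $\theta $ then gives the identity $[\alpha \vee \theta ,\beta \vee \theta ]_M\vee \theta =[\alpha ,\beta ]_M\vee \theta $. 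Applying the quotient homomorphism property to $\gamma =\alpha \vee \theta $ and $\delta =\beta \vee \theta $ yields $[h(\alpha \vee \theta ),h(\beta \vee \theta )]_N=h([\alpha \vee \theta ,\beta \vee \theta ]_M)$, and pulling this back along $h^*$, using that $h^*(h(\zeta ))=\zeta \vee \theta $ for every $\zeta \in {\rm Con}(M)$ (the congruence correspondence recalled in the Preliminaries, applied via the factorisation of $h$ through $p_{\theta }$), produces $h^*([h(\alpha \vee \theta ),h(\beta \vee \theta )]_N)=[\alpha \vee \theta ,\beta \vee \theta ]_M\vee \theta $. By the identity above this equals $[\alpha ,\beta ]_M\vee \theta $, which is exactly the required equation.

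It remains to see that $[\alpha ,\beta ]_M$ is the \emph{minimum} of this set, and here the decisive observation is to specialise the defining condition to the identity morphism. Taking $N=M$ and $h={\rm id}_M$, so that ${\rm Ker}(h)=\Delta _M$ and $h^*={\rm id}$, the condition imposed on a candidate $\mu $ collapses to $\mu =[\alpha ,\beta ]_M$. Hence, granting that $[\alpha ,\beta ]_M$ itself satisfies the condition for every other $h$, which is the content of the previous paragraph, the set in question is exactly the singleton $\{[\alpha ,\beta ]_M\}$; its minimum therefore exists and equals $[\alpha ,\beta ]_M$, establishing the displayed equation for the commutator.

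The main obstacle I expect is the quotient homomorphism property $[h(\gamma ),h(\delta )]_N=h([\gamma ,\delta ]_M)$ for surjective $h$: this is the one genuinely nontrivial input, resting on the fact that the centrality (term) condition defining the commutator is both preserved and reflected by surjective morphisms in a congruence--modular variety, and it is precisely this compatibility that is supplied by the construction of the commutator. A secondary delicate point is uniqueness, which is subtle because the characterisation is self--referential --- the operation on the quotients of $M$ appears inside the defining set for $M$ itself. I would settle it by inheriting the uniqueness already furnished by commutator theory, where the commutator is the unique operation on the congruence lattices of the members of ${\cal C}$ determined by centrality; the displayed minimum--equation is then a reformulation of that operation, and the identity--morphism specialisation shows that any family satisfying the equation is forced to take, at each $M$, the single admissible value, so no competing family can arise.
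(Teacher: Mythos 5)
The paper itself contains no proof of Theorem \ref{wow}: it is imported verbatim from Freese and McKenzie \cite{fremck}, so there is no in-paper argument to compare yours against, and your proposal must be judged on its own merits. Your existence half is essentially sound, with one citation-level inaccuracy. The computation $[\alpha \vee \theta ,\beta \vee \theta ]_M\vee \theta =[\alpha ,\beta ]_M\vee \theta $ via additivity and absorption is correct, and combined with the homomorphism theorem for the modular commutator it places $[\alpha ,\beta ]_M$ in the displayed set; your $h={\rm id}_M$ observation then shows the set is the singleton $\{[\alpha ,\beta ]_M\}$, so the minimum exists and the equation holds. But the ``quotient homomorphism property'' you invoke, $[h(\gamma ),h(\delta )]_N=h([\gamma ,\delta ]_M)$ for all $\gamma ,\delta $ and all surjective $h$, is false as stated: $[\gamma ,\delta ]_M$ need not contain ${\rm Ker}(h)$, the relation $h([\gamma ,\delta ]_M)$ need not be transitive, and the auxiliary identity $h^*(h(\zeta ))=\zeta \vee {\rm Ker}(h)$ you use fails for arbitrary $\zeta $ (in general $h^*(h(\zeta ))={\rm Ker}(h)\circ \zeta \circ {\rm Ker}(h)$, and equality with the join requires $\zeta \supseteq {\rm Ker}(h)$ or permutability). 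The correct form, and the only one you actually need, is $h^*([h(\gamma ),h(\delta )]_N)=[\gamma ,\delta ]_M\vee {\rm Ker}(h)$ for $\gamma ,\delta \supseteq {\rm Ker}(h)$; with it, your chain of equalities goes through unchanged.

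The uniqueness half has a genuine gap: it is circular. The identity-morphism collapse shows only that any family of operations satisfying the displayed equation is self-consistent --- the ``single admissible value'' it pins down is that family's own value at $(\alpha ,\beta )$, not the commutator's. Concretely, the family defined by $C_N(\gamma ,\delta )=\Delta _N$ for every member $N$ of ${\cal C}$ satisfies the displayed equation verbatim: with this family inside, the condition on $\mu $ reads $\mu \vee {\rm Ker}(h)=h^*(\Delta _N)={\rm Ker}(h)$ for every surjective $h$, which at $h={\rm id}_M$ forces $\mu =\Delta _M$, and $\Delta _M$ does belong to the set, so the minimum is $\Delta _M=C_M(\alpha ,\beta )$. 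In any congruence--modular variety whose commutator is not identically $\Delta $ (groups, for instance) this family differs from the commutator, so family-level uniqueness cannot be derived by your route; your appeal to ``inheriting uniqueness from commutator theory'' presupposes exactly what is at issue, namely that the equation pins the operation down, and the $\Delta $-family shows it does not. What can be proved non-circularly is the variant in which the inner commutators are the already-constructed term-condition commutators of \cite{fremck}, e.g. $[\alpha ,\beta ]_M=\min \{\mu \in {\rm Con}(M)\ |\ [(\alpha \vee \mu )/\mu ,(\beta \vee \mu )/\mu ]_{M/\mu }=\Delta _{M/\mu }\}$, whose minimum-set contains your smaller one; uniqueness there comes from the term-condition construction itself, not from the minimum-equation. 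You should flag this rather than lean on the identity-morphism specialisation, which, as you half-noticed, makes the characterisation collapse to a tautology for every candidate family.
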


\begin{proposition}{\rm \cite{fremck}} If ${\cal C}$ is congruence--modular, then the commutator in ${\cal C}$ is:\begin{itemize}
\item included in the intersection: $[\alpha ,\beta ]_A\subseteq \alpha \cap \beta $ for all $\alpha ,\beta \in {\rm Con}(A)$;
\item commutative, that is $[\alpha ,\beta ]_A=[\beta ,\alpha ]_A$ for all $\alpha ,\beta \in {\rm Con}(A)$;
\item increasing in both arguments, that is, for all $\alpha ,\beta ,\phi ,\psi \in {\rm Con}(A)$, if $\alpha \subseteq \beta $ and $\phi \subseteq \psi $, then $[\alpha ,\phi ]_A\subseteq [\beta ,\psi ]_A$;
\item distributive in both arguments with respect to arbitrary joins, that is, for any non--empty families $(\alpha _i)_{i\in I}$ and $(\beta _j)_{j\in J}$ of congruences of $A$, we have $\displaystyle [\bigvee _{i\in I}\alpha _i,\bigvee _{j\in J}\beta _j]_A=\bigvee _{i\in I}\bigvee _{j\in J}[\alpha _i,\beta _j]_A$.\end{itemize}\label{1.3}\end{proposition}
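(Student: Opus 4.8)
The inclusion property is immediate from the characterization in Theorem \ref{wow}: the congruence $[\alpha ,\beta ]_A$ is defined as the minimum of a set each of whose members $\mu $ satisfies $\mu \subseteq \alpha \cap \beta $; being itself a member of that set, it satisfies $[\alpha ,\beta ]_A\subseteq \alpha \cap \beta $. This is the one property that follows formally from the minimality characterization alone, with no further input.

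For the remaining three properties I would work from the underlying term--condition (centralizer) definition of the modular commutator from \cite{fremck}, of which Theorem \ref{wow} is an equivalent reformulation: $[\alpha ,\beta ]_A$ is the least $\delta \in {\rm Con}(A)$ such that $\alpha $ centralizes $\beta $ modulo $\delta $, written $C(\alpha ,\beta ;\delta )$. Monotonicity then follows from the fact that the centralizing condition is antitone in its first two arguments: if $\alpha \subseteq \beta $, $\phi \subseteq \psi $ and $C(\beta ,\psi ;\delta )$ holds, then $C(\alpha ,\phi ;\delta )$ holds, since the defining implications for the smaller relations are instances of those for the larger ones. Taking $\delta =[\beta ,\psi ]_A$ yields $[\alpha ,\phi ]_A\subseteq [\beta ,\psi ]_A$.

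Given monotonicity, one inclusion of the join--distributivity is free: from $\alpha _i\subseteq \bigvee _{k\in I}\alpha _k$ and $\beta _j\subseteq \bigvee _{l\in J}\beta _l$ we obtain $[\alpha _i,\beta _j]_A\subseteq [\bigvee _{k\in I}\alpha _k,\bigvee _{l\in J}\beta _l]_A$ for all $i\in I$ and $j\in J$, and hence $\bigvee _{i\in I}\bigvee _{j\in J}[\alpha _i,\beta _j]_A\subseteq [\bigvee _{i\in I}\alpha _i,\bigvee _{j\in J}\beta _j]_A$. The reverse inclusion is the additivity of the commutator, and it is the genuinely hard half.

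The main obstacle is exactly the pair of deep facts that do not follow from minimality or from monotonicity, namely commutativity and the reverse (additivity) inclusion of the join--distributivity. Both are where congruence--modularity is essential: for an arbitrary variety the term--condition commutator need be neither symmetric nor additive, and the standard route is to invoke the Day terms witnessing the modularity of ${\cal C}$ and to run the centralizer calculus with them. I would establish commutativity via this difference--term/Day--term machinery and then derive additivity along the same lines, after which the four items assemble into Proposition \ref{1.3}.
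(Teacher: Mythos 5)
Your attempt is incomplete at exactly the two points you yourself flag as hard, and a sketch of where a proof would go is not a proof. For the record, the paper offers no proof of Proposition \ref{1.3} at all: it is a quoted preliminary, cited from \cite{fremck}, so the paper's ``proof'' is the citation. Measured against that, the parts you do prove are correct and correctly identified as the formal ones: since Theorem \ref{wow} asserts the minimum is \emph{attained}, $[\alpha ,\beta ]_A$ itself belongs to the defining set and hence $[\alpha ,\beta ]_A\subseteq \alpha \cap \beta $; the centralizer relation $C(\alpha ,\beta ;\delta )$ is indeed antitone in its first two arguments (the defining implications for smaller congruences are instances of those for larger ones), which gives monotonicity once one takes $\delta =[\beta ,\psi ]_A$; and one inclusion of the distributivity law does follow from monotonicity exactly as you say. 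Note, however, that even the monotonicity step already leaves the paper: the term--condition definition is nowhere stated in this article, and it is not clear how to extract monotonicity from the implicit characterization of Theorem \ref{wow} alone, so your argument tacitly rests on the cited source's development rather than on anything available here.

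The genuine gap is commutativity and the additivity inclusion $[\bigvee _{i\in I}\alpha _i,\bigvee _{j\in J}\beta _j]_A\subseteq \bigvee _{i\in I}\bigvee _{j\in J}[\alpha _i,\beta _j]_A$. Writing ``I would establish commutativity via this difference--term/Day--term machinery and then derive additivity along the same lines'' names the machinery without deploying it: you never exhibit the Day terms, invoke the shifting lemma, construct the difference term, or run the computation showing that the term condition is symmetric modulo the commutator. These two facts are the entire substance of the proposition --- for general varieties the term--condition commutator is neither symmetric nor additive, as you note --- and in \cite{fremck} their proofs occupy a substantial development built on the difference term, not a few lines. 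So as a blind, self--contained proof your proposal fails precisely where congruence--modularity does real work; as a plan it is sound, but completing it would amount to reproducing that portion of \cite{fremck}, which is why the paper (reasonably) cites rather than proves. Had you done the same --- proved the inclusion and the easy half, and cited \cite{fremck} for commutativity and additivity --- the result would have matched the paper's treatment.
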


\begin{theorem}{\rm \cite{fremck}} If ${\cal C}$ is congruence--distributive, then, in each member of ${\cal C}$, the commutator coincides to the intersection of congruences.\label{distrib}\end{theorem}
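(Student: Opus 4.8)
The plan is to prove, for an arbitrary member $M$ of ${\cal C}$ and arbitrary $\alpha ,\beta \in {\rm Con}(M)$, the equality $[\alpha ,\beta ]_M=\alpha \cap \beta $. First I would note that every distributive lattice is modular, so ${\cal C}$ being congruence--distributive forces ${\cal C}$ to be congruence--modular; hence Theorem \ref{wow} guarantees that the commutator $[\cdot ,\cdot ]_M$ exists on ${\rm Con}(M)$ and Proposition \ref{1.3} applies to it. The inclusion $[\alpha ,\beta ]_M\subseteq \alpha \cap \beta $ is then immediate from the first bullet of Proposition \ref{1.3}, and it already holds in every congruence--modular variety. The entire content of the theorem therefore lies in the reverse inclusion $\alpha \cap \beta \subseteq [\alpha ,\beta ]_M$.

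The key reduction I would carry out is to the idempotence (neutrality) of the commutator, that is, to the statement $[\gamma ,\gamma ]_M=\gamma $ for every $\gamma \in {\rm Con}(M)$. Granting this for $\gamma =\alpha \cap \beta $, one computes $\alpha \cap \beta =[\alpha \cap \beta ,\alpha \cap \beta ]_M\subseteq [\alpha ,\beta ]_M$, where the equality is neutrality and the inclusion is the monotonicity of the commutator (third bullet of Proposition \ref{1.3}) applied to $\alpha \cap \beta \subseteq \alpha $ and $\alpha \cap \beta \subseteq \beta $. Combined with the easy inclusion of the previous paragraph, this yields $[\alpha ,\beta ]_M=\alpha \cap \beta $, as desired. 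Thus the proof reduces entirely to establishing neutrality under the hypothesis of congruence--distributivity.

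The main obstacle is precisely this idempotence $\gamma \subseteq [\gamma ,\gamma ]_M$: it is here, and only here, that congruence--distributivity must be used in an essential way. Indeed, the four formal properties collected in Proposition \ref{1.3} cannot possibly suffice on their own, since all of them hold in the congruence--modular variety of commutative rings, where the commutator is the product of ideals and is, in general, strictly smaller than their intersection. To overcome this I would invoke the Maltsev characterization of congruence--distributive varieties by J\'onsson terms $d_0,\ldots ,d_n$, and feed these terms into the term--condition (centralizer) description of the commutator that underlies the defining property in Theorem \ref{wow}: the J\'onsson identities allow one to connect any pair $(a,b)\in \gamma $ by a finite chain of elements whose consecutive links lie in $[\gamma ,\gamma ]_M$, forcing $\gamma \subseteq [\gamma ,\gamma ]_M$. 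This step is the genuine heart of the argument, whereas the two reductions preceding it are purely formal manipulations with Proposition \ref{1.3}.
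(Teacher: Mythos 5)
Theorem \ref{distrib} is stated in the paper by citation to Freese and McKenzie \cite{fremck}; the paper itself contains no proof, so the comparison must be with the argument in that source. Your proposal is correct in outline, and your two reductions are exactly right: the inclusion $[\alpha ,\beta ]_M\subseteq \alpha \cap \beta $ is the first bullet of Proposition \ref{1.3}, and monotonicity gives $[\alpha \cap \beta ,\alpha \cap \beta ]_M\subseteq [\alpha ,\beta ]_M$, so the theorem reduces to neutrality $[\gamma ,\gamma ]_M=\gamma $; your sanity check that Proposition \ref{1.3} alone cannot yield this is also sound (in commutative rings the commutator is the ideal product, and e.g.\ $[2\Z ,2\Z ]=4\Z \subsetneq 2\Z $ in $\Z $, so some use of distributivity is indispensable). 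Your core step is only sketched, but it does complete along the lines you indicate: with $\delta =[\gamma ,\gamma ]_M$ and the term--condition description of the commutator ($\delta $ is the least congruence with $C(\gamma ,\gamma ;\delta )$), the instance of the term condition for the J\'onsson term $d_i$ with the pair $(a,b)\in \gamma $ in the last variable and parameter tuples $(a,a)$ and $(a,b)$ has premise $d_i(a,a,a)\mathrel{\delta }d_i(a,b,a)$, which holds with equality by $d_i(x,y,x)=x$, and conclusion $d_i(a,a,b)\mathrel{\delta }d_i(a,b,b)$; the alternating identities $d_i(x,x,y)=d_{i+1}(x,x,y)$ ($i$ even) and $d_i(x,y,y)=d_{i+1}(x,y,y)$ ($i$ odd), together with $d_0(x,y,z)=x$, $d_n(x,y,z)=z$ and the transitivity and symmetry of $\delta $, then chain $a$ to $b$, giving $\gamma \subseteq \delta $. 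This is a genuine, known proof, but it is a different route from the one in \cite{fremck}, which is lattice--theoretic rather than syntactic: there, if $[\gamma ,\gamma ]_M\subsetneq \gamma $, one passes to the quotient to obtain a nontrivial abelian congruence, and for an abelian congruence $\gamma $ the kernels of the two projections together with the diagonal congruence of the subalgebra $M(\gamma )\leq M^2$ generate a sublattice of its congruence lattice isomorphic to the diamond ${\cal D}$, contradicting congruence--distributivity (inherited by subalgebras of finite powers). Your route buys an explicit, effective chain witnessing $\gamma \subseteq [\gamma ,\gamma ]_M$ directly from J\'onsson terms, at the cost of invoking the centralizer definition of the commutator, which the paper never states (Theorem \ref{wow} records only the stability characterization); the Freese--McKenzie route stays within commutator--and--congruence--lattice reasoning and feeds into their wider theory of abelian congruences.
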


Following \cite{fremck}, if ${\cal C}$ is congruence--modular and $\phi $ is a proper congruence of $A$, then we call $\phi $ a {\em prime congruence} iff, for all $\alpha ,\beta \in {\rm Con}(A)$, $[\alpha ,\beta ]_A\subseteq \phi $ implies $\alpha \subseteq \phi $ or $\beta \subseteq \phi $. The set of the prime congruences of $A$ shall be denoted by ${\rm Spec}(A)$. Note that not every algebra in a congruence--modular equational class has prime congruences.

\begin{remark} Theorem \ref{distrib} shows that, if ${\cal C}$ is congruence--distributive, then the prime congruences of $A$ are exactly the prime elements of the lattice ${\rm Con}(A)$. Note that the same holds if ${\cal C}$ is congruence--modular and the commutator in $A$ equals the intersection of congruences.\label{primdistrib}\end{remark}

We recall that ${\cal C}$ is said to be {\em semi--degenerate} iff no non--trivial algebra in ${\cal C}$ has trivial subalgebras. For instance, the class of bounded lattices is semi--degenerate, and so is any class of bounded orderred structures.

\begin{proposition}{\rm \cite{koll}} The following are equivalent:\begin{itemize}
\item ${\cal C}$ is semi--degenerate;
\item for all members $M$ of ${\cal C}$, $\nabla _M$ is finitely generated.\end{itemize}\label{2.6}\end{proposition}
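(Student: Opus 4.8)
The plan is to prove the two implications separately, the substantive one being ``semi--degenerate $\Rightarrow \nabla_M$ finitely generated for all $M\in {\cal C}$''. Throughout I use only that ${\cal C}$, being an equational class, is closed under subalgebras and direct products, that $Cg_M$ is an algebraic closure operator, and that $f^*$ carries congruences to congruences.

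For the implication ``$\nabla_M$ finitely generated for all $M$ $\Rightarrow$ semi--degenerate'' I would argue by contraposition, exhibiting a single member with non--compact top congruence. Assume ${\cal C}$ is not semi--degenerate, so there is a non--trivial $A\in {\cal C}$ with a one--element subalgebra $\{c\}$; fix $d\in A$ with $d\neq c$. Then the ``weak power''
\[ M=\{f\in A^{\N}\ |\ f(i)=c\text{ for all but finitely many }i\in \N \} \]
is a subalgebra of $A^{\N}$, hence lies in ${\cal C}$: it is closed under each fundamental operation precisely because $\{c\}$ is a subalgebra, so coordinates lying off a finite set stay equal to $c$. It is non--trivial, containing both the constant map $\bar{c}$ and the map $e_k$ taking value $d$ at $k$ and $c$ elsewhere. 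Suppose, for contradiction, that $\nabla_M=Cg_M(Y)$ for a finite $Y\subseteq M^2$. The finitely many components occurring in $Y$ have finite support, so there is $N\in \N $ with $f(N)=c$ for every such $f$. The projection $\rho :M\rightarrow A$ onto the $N$--th coordinate is a morphism in ${\cal C}$ with $Y\subseteq {\rm Ker}(\rho )$, whence $\nabla_M=Cg_M(Y)\subseteq {\rm Ker}(\rho )$; but $\rho (\bar{c})=c\neq d=\rho (e_N)$ contradicts $(\bar{c},e_N)\in \nabla_M\subseteq {\rm Ker}(\rho )$. Thus $\nabla_M$ is not finitely generated.

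For the main implication the key idea is to manufacture, inside a free algebra, a quotient possessing a one--element subalgebra, and then transport a \emph{uniform} finite generating set to every member. Let $T=F_{\cal C}(\{x,y,e\})$ be the free ${\cal C}$--algebra on three generators and put $\sigma =Cg_T(G)$, where $G$ consists of the pairs $(\omega ^T(e,\ldots ,e),e)$ for each fundamental operation $\omega $ of arity $\geq 1$ together with the pairs $(c^T,e)$ for each constant symbol $c$. By construction the class $e/\sigma $ is a one--element subalgebra of $T/\sigma \in {\cal C}$, so semi--degeneracy forces $T/\sigma $ to be trivial, i.e. $\sigma =\nabla_T$; in particular $(x,y)\in \sigma $. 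Since $Cg_T$ is algebraic, membership of the single pair $(x,y)$ is witnessed by finitely many elements of $G$, so $(x,y)\in Cg_T(G_0)$ for some finite $G_0\subseteq G$ involving only finitely many operation and constant symbols.

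It remains to transport this to an arbitrary $M\in {\cal C}$. Fix any $m\in M$ and, for arbitrary $a,b\in M$, let $h:T\rightarrow M$ be the morphism with $h(x)=a$, $h(y)=b$, $h(e)=m$. The inverse image $h^*(Cg_M(h(G_0)))$ is a congruence of $T$ containing $G_0$, hence containing $(x,y)$, so $(a,b)=(h(x),h(y))\in Cg_M(h(G_0))$. Here $h(G_0)$ is the finite set of pairs $(\omega ^M(m,\ldots ,m),m)$ and $(c^M,m)$ coming from the finitely many symbols in $G_0$, and it does \emph{not} depend on $a,b$. As $a,b$ range over $M$, this gives $Cg_M(h(G_0))=\nabla_M$, exhibiting $\nabla_M$ as finitely generated. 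The step I expect to be the crux is the passage $\sigma =\nabla_T\Rightarrow (x,y)\in Cg_T(G_0)$ with $G_0$ finite: it is precisely here that semi--degeneracy (via the forced trivial quotient $T/\sigma $) and the algebraicity of congruence generation combine to yield a \emph{single} finite set of generators that works uniformly across ${\cal C}$, independently of the chosen $m$ and of the pair $(a,b)$.
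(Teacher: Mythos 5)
The paper offers no proof of Proposition \ref{2.6} to compare against: it is quoted verbatim from Koll\'ar's note \cite{koll}, so your proposal has to be judged on its own, and it checks out --- indeed it is, in substance, the classical argument. Your contrapositive direction is sound: the weak power $M$ is closed under every fundamental operation exactly because $\{c\}$ is a subalgebra (note this also forces $c'^A=c$ for every constant symbol $c'$, so the induced constants of $A^{\N }$ lie in $M$), and the projection onto a coordinate $N$ avoiding the finitely many supports occurring in $Y$ gives ${\rm Ker}(\rho )\supseteq Cg_M(Y)$ while ${\rm Ker}(\rho )\neq \nabla _M$, as you say. Your main direction is also correct at every checkpoint I would probe: $\{e/\sigma \}$ is a one--element subalgebra of $T/\sigma \in {\cal C}$ (equational classes are closed under quotients), so semi--degeneracy forces $\sigma =\nabla _T\ni (x,y)$; algebraicity of $Cg_T$ extracts the finite $G_0$; and --- the point on which the whole transport hinges --- the pairs in $G_0$ are terms in $e$ alone, so $h(G_0)$ depends only on $m$ and not on $(a,b)$, whence letting $a,b$ range over $M$ yields $\nabla _M=Cg_M(h(G_0))$ from the single fixed finite set $h(G_0)$. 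The degenerate edge cases (trivial variety, signature with no operation symbols, in which $G=\emptyset $ and semi--degeneracy forces the variety to be trivial) all pass through your argument unchanged, and the paper's standing convention that algebras are non--empty licenses the choice of $m\in M$.
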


\begin{lemma}{\rm \cite[Theorem $5.3$]{agl}} If ${\cal C}$ is congruence--modular and $\nabla _A$ is finitely generated, then:\begin{itemize}
\item any proper congruence of $A$ is included in a maximal congruence of $A$;
\item any maximal congruence of $A$ is prime.\end{itemize}\label{folclor}\end{lemma}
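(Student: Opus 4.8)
The plan is to prove the two assertions separately: the first by a Zorn's Lemma argument in which the finite generation of $\nabla_A$ guarantees that chains of proper congruences stay proper, and the second by reducing primality, via the distributivity of the commutator over joins, to the single equality $[\nabla_A,\nabla_A]_A=\nabla_A$.

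For the first assertion I would fix a proper congruence $\phi $ of $A$ and consider the poset ${\cal F}=\{\theta \in {\rm Con}(A)\mid \phi \subseteq \theta \neq \nabla _A\}$ ordered by inclusion, which is non-empty since $\phi \in {\cal F}$. To apply Zorn's Lemma the point is that every chain $(\theta _i)_{i\in I}$ in ${\cal F}$ has an upper bound in ${\cal F}$. Its union $\theta =\bigcup _{i\in I}\theta _i$ is again a congruence, because the operations of $A$ are finitary and the chain is directed, and it clearly contains $\phi $. The only thing to check is $\theta \neq \nabla _A$, and this is exactly where the hypothesis enters: if we had $\theta =\nabla _A=Cg_A(X)$ with $X\subseteq A^2$ finite, then each of the finitely many pairs of $X$ would lie in some member of the chain, hence, the chain being totally ordered, all of them would lie in a single $\theta _{i_0}$, forcing $\nabla _A=Cg_A(X)\subseteq \theta _{i_0}$ and contradicting $\theta _{i_0}\in {\cal F}$. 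Thus $\theta \in {\cal F}$ is an upper bound, and a maximal element $\mu $ of ${\cal F}$ provided by Zorn's Lemma is a proper congruence containing $\phi $; it is in fact a maximal congruence of $A$, since any proper $\psi \supsetneq \mu $ would also contain $\phi $ and hence lie in ${\cal F}$, contradicting the maximality of $\mu $ in ${\cal F}$.

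For the second assertion I would argue by contradiction. Let $\mu \in {\rm Max}(A)$ and suppose $\alpha ,\beta \in {\rm Con}(A)$ satisfy $[\alpha ,\beta ]_A\subseteq \mu $ while $\alpha \not\subseteq \mu $ and $\beta \not\subseteq \mu $. By Remark \ref{congrcat}, maximality of $\mu $ gives $[\mu )=\{\mu ,\nabla _A\}$, so from $\alpha \not\subseteq \mu $ and $\beta \not\subseteq \mu $ we obtain $\mu \vee \alpha =\mu \vee \beta =\nabla _A$. Expanding the commutator of these joins by its distributivity over joins and its commutativity (Proposition \ref{1.3}) yields
\[
[\nabla _A,\nabla _A]_A=[\mu \vee \alpha ,\mu \vee \beta ]_A=[\mu ,\mu ]_A\vee [\mu ,\beta ]_A\vee [\alpha ,\mu ]_A\vee [\alpha ,\beta ]_A.
\]
Since the commutator is included in the intersection of its arguments, the first three joinands are contained in $\mu $, and the fourth is contained in $\mu $ by hypothesis; hence $[\nabla _A,\nabla _A]_A\subseteq \mu \subsetneq \nabla _A$.

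The main obstacle, and the only genuinely non-routine point of the whole argument, is then to establish the reverse equality $[\nabla _A,\nabla _A]_A=\nabla _A$ for a non-trivial $A$, which would immediately give $\nabla _A\subseteq \mu $, contradicting $\mu \neq \nabla _A$ and thereby forcing $\alpha \subseteq \mu $ or $\beta \subseteq \mu $. When ${\cal C}$ is congruence-distributive this equality is automatic from Theorem \ref{distrib}, since there $[\nabla _A,\nabla _A]_A=\nabla _A\cap \nabla _A=\nabla _A$. In the general congruence-modular setting the equality $[\nabla _A,\nabla _A]_A=\nabla _A$ is precisely the incarnation of the finite generation of $\nabla _A$: it reflects the semi-degeneracy isolated in Proposition \ref{2.6}, and I would expect the crux of the proof to be its verification from the finitely many generators of $\nabla _A$, after which both parts close as above.
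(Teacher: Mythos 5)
Your first bullet is proved correctly and completely: the Zorn's Lemma argument, with the finitely many generators of $\nabla _A$ being absorbed into a single member of the chain, is the standard compactness argument and needs nothing beyond the local hypothesis. Note that the paper itself offers no proof to compare against here --- Lemma \ref{folclor} is quoted from Agliano's Theorem 5.3 --- so your proposal must stand on its own. For the second bullet, your reduction via Proposition \ref{1.3} (expanding $[\mu \vee \alpha ,\mu \vee \beta ]_A$ by distributivity over joins and bounding each joinand by $\mu $) correctly shows that everything hinges on the single equality $[\nabla _A,\nabla _A]_A=\nabla _A$, which you explicitly leave open.

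That open step is a genuine gap, and the route you sketch for closing it would fail: the equality $[\nabla _A,\nabla _A]_A=\nabla _A$ cannot be ``verified from the finitely many generators of $\nabla _A$'', because finite generation of $\nabla _A$ for the single algebra $A$ does not imply it. Take ${\cal C}$ to be the congruence--permutable (hence congruence--modular) variety of abelian groups and $A=\Z _p$: then $\nabla _A=Cg_A(0,1)$ is even principal, yet $[\nabla _A,\nabla _A]_A=\Delta _A$, since the commutator of congruences of groups is computed by the group--theoretic commutator of the corresponding normal subgroups; so $\Delta _A$ is a maximal congruence with $[\nabla _A,\nabla _A]_A\subseteq \Delta _A$ and $\nabla _A\nsubseteq \Delta _A$, i.e.\ a maximal congruence that is not prime. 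Your own computation shows, conversely, that whenever $\nabla _A$ is compact and $[\nabla _A,\nabla _A]_A\neq \nabla _A$, the first bullet produces a maximal congruence above $[\nabla _A,\nabla _A]_A$ which is not prime --- so the second bullet is literally equivalent to the equality you need, and no argument from the local hypothesis alone can exist. What closes the argument is the variety--level hypothesis under which Agliano's theorem operates and under which the paper actually applies the lemma (Remark \ref{rfolclor}): if $\nabla _M$ is finitely generated for \emph{all} members $M$ of ${\cal C}$ --- equivalently, by Koll\'ar's Proposition \ref{2.6}, if ${\cal C}$ is semi--degenerate --- then Lemma \ref{semidprod} combined with Proposition \ref{prodcongr} yields $[\nabla _M,\nabla _M]_M=\nabla _M$ for every member $M$, and your expansion then finishes the proof verbatim. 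So the missing idea is precisely the jump from the local hypothesis on $A$ to semi--degeneracy of ${\cal C}$; your appeal to Proposition \ref{2.6} gestures in the right direction, but the hypothesis as you use it (on $A$ alone) is strictly weaker than what that proposition characterizes.
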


\begin{remark}\begin{itemize}\item By Lemma \ref{folclor}, if ${\cal C}$ is congruence--modular, $\nabla _A$ is finitely generated and $A$ is non--trivial, so that $\Delta _A$ is a proper congruence of $A$, then $\emptyset \neq {\rm Max}(A)\subseteq {\rm Spec}(A)$.\item Proposition \ref{2.6} shows that, if ${\cal C}$ is congruence--modular and semi--degenerate, then every member of ${\cal C}$ fulfills the properties stated in Lemma \ref{folclor}.\end{itemize}\label{rfolclor}\end{remark}

\begin{proposition}{\rm \cite[Theorem 8.5, p. 85]{fremck}} If ${\cal C}$ is congruence--modular, then the following are equivalent:\begin{itemize}
\item for any algebra $M$ from ${\cal C}$, $[\nabla _M,\nabla _M]_M=\nabla _M$;
\item for any algebra $M$ from ${\cal C}$ and any $\theta \in {\rm Con}(M)$, $[\theta ,\nabla _M]_M=\theta $;
\item for any $n\in \N ^*$ and any algebras $M_1,\ldots ,M_n$ from ${\cal C}$, $\displaystyle {\rm Con}(\prod _{i=1}^nM_i)=\{\prod _{i=1}^n\theta _i\ |\ (\forall \, i\in \overline{1,n})\, (\theta _i\in {\rm Con}(M_i))$.\end{itemize}\label{prodcongr}\end{proposition}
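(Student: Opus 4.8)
The plan is to denote the three statements by (1), (2) and (3), respectively, and to prove the cycle $(2)\Rightarrow (1)\Rightarrow (3)\Rightarrow (2)$. The implication $(2)\Rightarrow (1)$ is immediate: taking $\theta =\nabla _M$ in (2) gives $[\nabla _M,\nabla _M]_M=\nabla _M$ for every member $M$ of ${\cal C}$. (As a sanity anchor, note that when ${\cal C}$ is congruence--distributive all three conditions hold automatically: by Theorem \ref{distrib} the commutator is the intersection, so $[\theta ,\nabla _M]_M=\theta \cap \nabla _M=\theta $.) The two remaining implications are where the work lies, and the first tool I would build is the computation of the commutator on product congruences: for $M_1,\ldots ,M_n\in {\cal C}$ and $\alpha _i,\beta _i\in {\rm Con}(M_i)$, $\displaystyle [\prod _{i=1}^n\alpha _i,\prod _{i=1}^n\beta _i]_{\prod _{i=1}^nM_i}=\prod _{i=1}^n[\alpha _i,\beta _i]_{M_i}$. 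This follows from Theorem \ref{wow} applied to the canonical projections $\pi _j:\prod _{i=1}^nM_i\rightarrow M_j$, which are surjective morphisms in ${\cal C}$ whose kernels are the product congruences equal to $\Delta _{M_j}$ on the $j$--th factor and to $\nabla _{M_i}$ on the others: the defining clause of the commutator for these $\pi _j$ forces the $j$--th projection of the left--hand side to equal $[\alpha _j,\beta _j]_{M_j}$, and Proposition \ref{1.3} gives the inclusion $[\prod _i\alpha _i,\prod _i\beta _i]_{\prod _iM_i}\subseteq \prod _i(\alpha _i\cap \beta _i)$, which together pin down the product.

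For $(1)\Rightarrow (3)$ I would use the standard description of product congruences. Writing $\eta _j={\rm Ker}(\pi _j)$, every congruence of $(M_1\times M_2)/\eta _j$ lifts to a congruence above $\eta _j$, so for $\theta \in {\rm Con}(M_1\times M_2)$ both $\theta \vee \eta _1$ and $\theta \vee \eta _2$ are product congruences and $\theta ':=(\theta \vee \eta _1)\cap (\theta \vee \eta _2)$ is the least product congruence above $\theta $; by induction on $n$ this reduces (3) to the case $n=2$ and to proving $\theta =\theta '$, i.e. the absence of \emph{skew} congruences. Modularity of ${\rm Con}(M_1\times M_2)$ confines the discrepancy to the interval $[\theta ,\theta ']$, which carries an abelian section, and perfection of the relevant quotient (hypothesis (1)) is exactly what forces that section, hence $\theta '/\theta $, to collapse. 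For $(3)\Rightarrow (2)$ the easy inclusion $[\theta ,\nabla _M]_M\subseteq \theta $ is again Proposition \ref{1.3}; the reverse inclusion is obtained by contraposition: a strict inclusion $[\theta ,\nabla _M]_M\subsetneq \theta $ would manifest a nontrivial abelian section at the top of an interval in ${\rm Con}(M)$, which in turn yields a skew congruence on $M^2$, contradicting the absence of skew congruences supplied by (3).

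The heart of the argument, and the step I expect to be the main obstacle, is precisely this bridge between skew congruences and the commutator condition. The reverse inclusion $\theta \subseteq [\theta ,\nabla _M]_M$ cannot be extracted from the four formal properties of Proposition \ref{1.3} alone: it genuinely uses the structure theory of abelian congruences in congruence--modular varieties --- concretely, that an abelian algebra is affine, so that a nontrivial abelian section produces, through its underlying module structure, the characteristic ``difference'' relation, which is a non--product congruence on a square. It is exactly here --- in identifying the defect $\theta '/\theta $ (respectively $\theta /[\theta ,\nabla _M]_M$) as an abelian, hence affine, section and reading off a skew congruence --- that I would invoke the machinery of \cite{fremck}. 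Everything else (the product formula, the reduction to $n=2$, and the modularity manipulations in the intervals $[\theta ,\theta ']$) is routine bookkeeping given Theorem \ref{wow} and Proposition \ref{1.3}.
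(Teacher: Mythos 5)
First, a calibration: the paper does not prove Proposition \ref{prodcongr} at all --- it is quoted with the citation \cite[Theorem 8.5, p. 85]{fremck} --- so there is no in--paper argument to compare yours against; the benchmark is the Freese--McKenzie proof you are reconstructing, and in that sense your reliance on their machinery is exactly what the paper itself does by citation. Measured against that benchmark, your skeleton is right: $(2)\Rightarrow (1)$ is trivial, $\theta '=(\theta \vee \eta _1)\cap (\theta \vee \eta _2)$ is indeed the least product congruence above $\theta $, the reduction to $n=2$ is routine, and you correctly locate the deep input in the abelian--implies--affine theory. Two remarks on the supporting steps, though. The commutator--on--products formula you propose to ``build'' is already available in the paper as Proposition \ref{comutprod} (cited from Ouwehand), and your derivation of it from Theorem \ref{wow} is incomplete as stated: the clauses for the projections $\pi _j$ give $\mu \vee \eta _j=\pi _j^*([\alpha _j,\beta _j]_{M_j})$, whence only the upper bound $\mu \subseteq \bigcap _j(\mu \vee \eta _j)=\prod _j[\alpha _j,\beta _j]_{M_j}$; the reverse inclusion does not follow from those clauses together with $\mu \subseteq \prod _i(\alpha _i\cap \beta _i)$, and needs, e.g., the join decomposition of product congruences combined with distributivity of the commutator over joins (Proposition \ref{1.3}) --- or simply a citation of Proposition \ref{comutprod}.

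The one substantive soft spot is your $(1)\Rightarrow (3)$. The sentence ``perfection of the relevant quotient (hypothesis (1)) is exactly what forces that section $\ldots $ to collapse'' is not literally correct: the section $\theta '/\theta $ is central in $(M_1\times M_2)/\theta $, and perfection of a single algebra is perfectly compatible with a nontrivial center (perfect groups with nontrivial center exist), so $[\nabla ,\nabla ]=\nabla $ in that one quotient kills nothing. What works, and what Freese--McKenzie actually do, is to extract from a nontrivial abelian section a nontrivial abelian algebra of ${\cal C}$ (the module--like algebra built from the section via the difference term) and apply hypothesis (1), quantified over all of ${\cal C}$, to that derived algebra; in other words, your $(1)\Rightarrow (3)$ secretly contains the full strength of $(1)\Rightarrow (2)$, i.e.\ the same machinery you invoke for $(3)\Rightarrow (2)$, so the heavy input cannot be avoided there by ``modularity manipulations in $[\theta ,\theta ']$'' alone. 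It is also worth knowing that once (2) is in hand, $(2)\Rightarrow (3)$ is a two--line computation requiring no abelian theory at all: by Proposition \ref{1.3}, $[\theta ',\eta _2]\subseteq [\theta \vee \eta _1,\eta _2]=[\theta ,\eta _2]\vee [\eta _1,\eta _2]\subseteq \theta \vee (\eta _1\cap \eta _2)=\theta $, symmetrically $[\theta ',\eta _1]\subseteq \theta $, hence $[\theta ',\nabla ]=[\theta ',\eta _1\vee \eta _2]=[\theta ',\eta _1]\vee [\theta ',\eta _2]\subseteq \theta $, and (2) yields $\theta '=[\theta ',\nabla ]\subseteq \theta $, so $\theta =\theta '$. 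A cleaner organization is therefore: $(2)\Rightarrow (1)$ and $(2)\Rightarrow (3)$ elementary as above, with the Freese--McKenzie abelian/affine machinery confined to the upgrades into (2). As a blind plan, yours is a faithful roadmap of the standard proof; as written, it is not yet a proof, since both hard implications are delegated to \cite{fremck} and the one place where you attempt to shortcut that delegation is precisely where the argument, as phrased, would fail.
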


\begin{remark} Clearly, by Theorem \ref{distrib}, if ${\cal C}$ is congruence--distributive, then ${\cal C}$ fulfills the equivalent conditions from Proposition \ref{prodcongr}.\label{distribprod}\end{remark}

\begin{lemma}{\rm \cite[Lemma 5.2]{agl}} If ${\cal C}$ is congruence--modular and semi--degenerate, then ${\cal C}$ fulfills the equivalent conditions from Proposition \ref{prodcongr}.\label{semidprod}\end{lemma}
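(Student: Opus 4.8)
The plan is to verify the first of the three equivalent conditions in Proposition~\ref{prodcongr}, namely that $[\nabla _M,\nabla _M]_M=\nabla _M$ for every member $M$ of ${\cal C}$; since ${\cal C}$ is congruence--modular, this yields all three conditions at once. First I would argue by contradiction: suppose some member $M$ satisfies $\gamma :=[\nabla _M,\nabla _M]_M\neq \nabla _M$, so that $N:=M/\gamma $ is non--trivial, and let $h=p_{\gamma }:M\rightarrow N$ be the canonical surjection, for which ${\rm Ker}(h)=\gamma $. Applying the defining property of the commutator from Theorem~\ref{wow} to $h$ and to $\alpha =\beta =\nabla _M$, and using that $h(\nabla _M)=\nabla _N$ together with $\gamma \vee \gamma =\gamma $, I obtain
\[ \gamma =\gamma \vee {\rm Ker}(h)=h^*([h(\nabla _M),h(\nabla _M)]_N)=h^*([\nabla _N,\nabla _N]_N). \]
Since $h$ is surjective, $h(h^*(Y))=Y$ for every $Y\subseteq N^2$, so applying $h$ gives $[\nabla _N,\nabla _N]_N=h(\gamma )=h(h^*(\Delta _N))=\Delta _N$. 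Thus the non--trivial algebra $N$ is \emph{abelian}, in the sense that its largest congruence commutes with itself down to the diagonal.

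The next step is to exploit congruence--modularity through Gumm's difference term $d$ for ${\cal C}$, which satisfies ${\cal C}\models d(x,y,y)=x$ and, for every algebra and every congruence $\theta $, $(d(a,a,b),b)\in [\theta ,\theta ]$ whenever $(a,b)\in \theta $. Taking $\theta =\nabla _N$ (so that every pair lies in $\theta $) and invoking $[\nabla _N,\nabla _N]_N=\Delta _N$, I obtain $d(a,a,b)=b$ and, from the identity, $d(a,b,b)=a$, for all $a,b\in N$; that is, $d$ is a Mal'cev operation on $N$, so $N$ is affine and congruence--permutable. By Proposition~\ref{2.6}, semi--degeneracy also gives that $\nabla _N$ is finitely generated as a congruence of $N$.

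The hard part is the final step: deriving a contradiction from the fact that the non--trivial member $N$ is abelian, using that ${\cal C}$ is semi--degenerate. Here I would pass from the finite--generation form of semi--degeneracy in Proposition~\ref{2.6} to Koll\'ar's underlying term witnesses --- terms and equations holding throughout ${\cal C}$ which encode that a single collapse forces a ``unit''--like behaviour, exactly as the presence of $1$ forces $R^2=R$, hence $[\nabla ,\nabla ]=\nabla $, in the variety of unital rings. Feeding these witnesses into the affine structure of $N$ should produce an element fixed by all basic operations, i.e. a trivial subalgebra of the non--trivial algebra $N$, contradicting semi--degeneracy; equivalently, the witnesses force $N$ to be trivial. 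This interface between Koll\'ar's term characterisation of semi--degeneracy and the commutator and difference--term machinery underlying Proposition~\ref{1.3} and Theorem~\ref{wow} is where the real content lies, and is the step I expect to require the most care.
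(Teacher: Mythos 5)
Your first two steps are sound and match the standard opening of the argument (the paper itself gives no proof of this lemma, citing \cite[Lemma 5.2]{agl}, so the comparison is with Agliano's argument): applying Theorem \ref{wow} to the canonical surjection $p_{\gamma }$ correctly yields $[\nabla _N,\nabla _N]_N=\Delta _N$, reducing the lemma to showing that a semi--degenerate congruence--modular ${\cal C}$ has no non--trivial abelian member, and the difference--term step is also correct. But the final step --- the one you yourself flag as carrying the real content --- is a genuine gap, and moreover it is aimed at the wrong algebra. ``Feeding Koll\'ar's witnesses into the affine structure of $N$ should produce an element fixed by all basic operations'' is a hope, not an argument, and in general no such element exists \emph{in $N$ itself}: for instance, the algebra $(\Z _2;\, x+y+z,\, x+1)$ is a non--trivial abelian algebra in a congruence--permutable (hence congruence--modular) variety with no one--element subalgebra, since the unary operation $x\mapsto x+1$ fixes no point. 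So no amount of term manipulation can be guaranteed to produce a trivial subalgebra of a general non--trivial abelian $N$, and the plan as stated would fail.

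The trivial subalgebra that contradicts semi--degeneracy lives instead in a quotient of $N\times N$. By Freese--McKenzie \cite{fremck}, in a congruence--modular variety an algebra $N$ is abelian if and only if the diagonal $D=\{(a,a)\ |\ a\in N\}$ is a single class of some congruence $\eta $ of $N\times N$ (one may take $\eta =\Delta _{1,1}$, the congruence generated by $D\times D$). Since $N$ is non--trivial, pick $a\neq b$ in $N$; then $(a,b)\notin D$, and because $D$ is an entire $\eta $--class, the class of $(a,b)$ differs from $D$, so $\eta \neq \nabla _{N\times N}$ and $N^2/\eta $ is non--trivial. On the other hand, $D$ is always a subalgebra of $N^2$, so its image $D/\eta $, being a single element, is a one--element subalgebra of the non--trivial algebra $N^2/\eta $ --- contradicting semi--degeneracy directly from its definition, with no need for Koll\'ar's term witnesses or Proposition \ref{2.6} at all (and once this route is taken, your affinity/Mal'cev step also becomes unnecessary, though it is harmless). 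Replacing your final paragraph with this diagonal argument completes the proof.
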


\begin{proposition}{\rm \cite[Theorem 5.17, p. 48]{owe}} Assume that ${\cal C}$ is congruence--modular, and let $n\in \N ^*$, $M_1,\ldots ,M_n$ be algebras from ${\cal C}$, $\displaystyle M=\prod _{i=1}^nM_i$ and, for all $i\in \overline{1,n}$, $\alpha _i,\beta _i\in {\rm Con}(M_i)$. Then: $\displaystyle [\prod _{i=1}^n\alpha _i,\prod _{i=1}^n\beta _i]_M=\prod _{i=1}^n[\alpha _i,\beta _i]_{M_i}$.\label{comutprod}\end{proposition}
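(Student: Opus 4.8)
The plan is to reduce the computation to one factor at a time, by combining the join–distributivity of the commutator (Proposition \ref{1.3}) with its transformation law under surjective morphisms (Theorem \ref{wow}), and to close the argument with a single application of the modular law in the lattice ${\rm Con}(M)$. Throughout I write $\displaystyle \alpha =\prod _{i=1}^n\alpha _i$ and $\displaystyle \beta =\prod _{i=1}^n\beta _i$, let $\pi _i:M\rightarrow M_i$ be the $i$--th projection (a surjective morphism in ${\cal C}$), and note that ${\rm Ker}(\pi _i)=\kappa _i$, the direct product of binary relations having $\Delta _{M_i}$ on component $i$ and $\nabla _{M_j}$ on each component $j\neq i$. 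I would first record the elementary fact that the join in ${\rm Con}(M)$ of two direct products of congruences is the direct product of the componentwise joins (a routine transitive--closure computation, which extends to finite joins). For each $i\in \overline{1,n}$ let $\lambda _i$ be the direct product of binary relations whose $i$--th component is $\alpha _i$ and whose other components are the respective diagonals, and define $\rho _i$ analogously from $\beta _i$. By the recorded fact, $\displaystyle \alpha =\bigvee _{i=1}^n\lambda _i$ and $\displaystyle \beta =\bigvee _{i=1}^n\rho _i$, so join--distributivity gives $\displaystyle [\alpha ,\beta ]_M=\bigvee _{i=1}^n\bigvee _{j=1}^n[\lambda _i,\rho _j]_M$.

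Next I would dispose of the off--diagonal terms. For $i\neq j$ a componentwise check shows $\lambda _i\cap \rho _j=\Delta _M$ (on component $i$ one meets $\alpha _i$ with $\Delta _{M_i}$, on component $j$ one meets $\Delta _{M_j}$ with $\beta _j$, and elsewhere one meets diagonals), whence $[\lambda _i,\rho _j]_M\subseteq \lambda _i\cap \rho _j=\Delta _M$ by the inclusion of the commutator in the meet. Therefore $\displaystyle [\alpha ,\beta ]_M=\bigvee _{i=1}^n[\lambda _i,\rho _i]_M$, and the whole statement reduces to the diagonal claim that $[\lambda _i,\rho _i]_M$ equals the direct product of binary relations with $[\alpha _i,\beta _i]_{M_i}$ on component $i$ and diagonals elsewhere; granting this, another appeal to the recorded fact yields $\displaystyle \bigvee _{i=1}^n[\lambda _i,\rho _i]_M=\prod _{i=1}^n[\alpha _i,\beta _i]_{M_i}$, as desired.

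To settle this diagonal claim I would regroup $M\cong M_i\times M'$ with $\displaystyle M'=\prod _{k\neq i}M_k$, so that $\lambda _i=\alpha _i\times \Delta _{M'}$ and $\rho _i=\beta _i\times \Delta _{M'}$. Put $E=[\lambda _i,\rho _i]_M$, $\kappa ={\rm Ker}(\pi _i)=\Delta _{M_i}\times \nabla _{M'}$ and $\tau =\nabla _{M_i}\times \Delta _{M'}$ (the kernel of the projection of $M$ onto $M'$); note $\kappa \cap \tau =\Delta _M$. Since the commutator is below the meet, $E\subseteq \lambda _i\cap \rho _i\subseteq \tau$. Applying the transformation law of Theorem \ref{wow} to the surjection $\pi _i$, and using $\lambda _i\vee \kappa =\alpha _i\times \nabla _{M'}$, $\rho _i\vee \kappa =\beta _i\times \nabla _{M'}$ together with $\pi _i(\alpha _i\times \nabla _{M'})=\alpha _i$ and $\pi _i(\beta _i\times \nabla _{M'})=\beta _i$, I obtain $E\vee \kappa =\pi _i^*([\alpha _i,\beta _i]_{M_i})=[\alpha _i,\beta _i]_{M_i}\times \nabla _{M'}$. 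Finally, because $E\subseteq \tau $, the modular law gives $E=(E\vee \kappa )\cap \tau =([\alpha _i,\beta _i]_{M_i}\times \nabla _{M'})\cap (\nabla _{M_i}\times \Delta _{M'})=[\alpha _i,\beta _i]_{M_i}\times \Delta _{M'}$, which is exactly the diagonal claim.

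The main obstacle is precisely this diagonal claim, that is, the inclusion $\prod _{i=1}^n[\alpha _i,\beta _i]_{M_i}\subseteq [\alpha ,\beta ]_M$; the reverse inclusion is comparatively soft, following from $[\alpha ,\beta ]_M\subseteq \alpha \cap \beta $ together with the projection computation $[\alpha ,\beta ]_M\vee \kappa _i=\pi _i^*([\alpha _i,\beta _i]_{M_i})$. It is exactly here that congruence--modularity is indispensable: the transformation law pins down $[\lambda _i,\rho _i]_M$ only modulo $\kappa $, and one needs the modular law — via the containment $E\subseteq \tau $ and the identity $\kappa \cap \tau =\Delta _M$ expressing that $\kappa $ and $\tau $ are complementary factor congruences — to recover $E$ itself rather than merely $E\vee \kappa $.
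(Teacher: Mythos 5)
Your proof is correct; note, however, that the paper itself offers no proof of this proposition to compare against --- it is imported verbatim from the cited source (\cite[Theorem 5.17, p. 48]{owe}) as part of the preliminaries. What your argument buys is a self-contained derivation from material the paper does state, namely Theorem \ref{wow}, Proposition \ref{1.3} and the modularity of ${\rm Con}(M)$, and it is essentially the standard proof of this fact: decompose $\alpha $ and $\beta $ as joins of the one-coordinate congruences $\lambda _i$, $\rho _i$, distribute the commutator, kill the cross terms via $[\lambda _i,\rho _j]_M\subseteq \lambda _i\cap \rho _j=\Delta _M$ for $i\neq j$, and pin down each diagonal term using the transformation law along $\pi _i$ plus one application of the modular law with the complementary factor congruences $\kappa \cap \tau =\Delta _M$. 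All the computations check out: $\lambda _i\vee \kappa =\alpha _i\times \nabla _{M'}$, $\pi _i^*(\theta )=\theta \times \nabla _{M'}$, $E\subseteq \tau $, and $(E\vee \kappa )\cap \tau =E\vee (\kappa \cap \tau )=E$. Two steps deserve an explicit word. First, when you invoke Theorem \ref{wow} you are using that $E=[\lambda _i,\rho _i]_M$, being the \emph{minimum} of the set described there, itself belongs to that set and hence satisfies the transformation law for the particular surjection $\pi _i$; this is valid but should be said, since the theorem is phrased as a minimality characterization rather than as a list of properties of the commutator. Second, your ``recorded fact'' $\displaystyle \prod _{i=1}^n\theta _i\vee \prod _{i=1}^n\psi _i=\prod _{i=1}^n(\theta _i\vee \psi _i)$ genuinely requires the finiteness of the index set (one pads the alternating chains in each coordinate to a common length using reflexivity), which is available here since $n\in \N ^*$; it would fail for infinite products, which is consistent with the proposition being stated only for finite ones. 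Your closing diagnosis is also accurate: the soft inclusion $[\alpha ,\beta ]_M\subseteq \prod _{i=1}^n[\alpha _i,\beta _i]_{M_i}$ already follows from $\displaystyle [\alpha ,\beta ]_M\subseteq \bigcap _{i=1}^n([\alpha ,\beta ]_M\vee \kappa _i)$ and the projection computation, while modularity is indispensable exactly where you place it, in recovering $E$ from $E\vee \kappa $ below $\tau $.
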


\section{Primality Versus Meet--irreducibility of Congruences}
\label{primired}

In this section, we present some characterizations for prime congruences that will be useful in the examples we shall provide in the following sections. Throughout this section, $L$ shall be a lattice and $x\in L$.

\begin{lemma} Then the following are equivalent:\begin{enumerate}
\item\label{genmeetirred1} $x$ is strictly meet--irreducible in $L$;
\item\label{genmeetirred2} $x$ is meet--irreducible in $L$ and $x$ has successors in $L$.\end{enumerate}\label{genmeetirred}\end{lemma}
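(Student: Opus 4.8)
The plan is to prove the two implications separately, treating (\ref{genmeetirred2})$\Rightarrow$(\ref{genmeetirred1}) as the substantive direction. Throughout I read ``$x$ has successors in $L$'' as ``$x$ has at least one immediate successor'', i.e.\ there is some $w\in L$ covering $x$; by contrast, strict meet--irreducibility asks for the existence of $\min \{y\in L\ |\ x<y\}$, which is a global condition on the whole upper section of $x$. (Note that this reading is forced: if ``has successors'' meant merely ``is not maximal'', the statement would fail, since in $\R$ the element $0$ is meet--irreducible and has elements above it, yet $\min \{y\in \R\ |\ 0<y\}$ does not exist.)

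For (\ref{genmeetirred1})$\Rightarrow$(\ref{genmeetirred2}) there is essentially nothing to do beyond invoking Remark \ref{strictmeetirred}: its first bullet states that a strictly meet--irreducible $x$ has the (unique) successor $x^+=\min \{y\in L\ |\ x<y\}$, so in particular $x$ has a successor, while its second bullet states that strict meet--irreducibility implies meet--irreducibility. Hence both conjuncts of (\ref{genmeetirred2}) hold.

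The real content is in (\ref{genmeetirred2})$\Rightarrow$(\ref{genmeetirred1}). Assume $x$ is meet--irreducible and fix some $w$ covering $x$. I claim $w=\min \{y\in L\ |\ x<y\}$, which yields strict meet--irreducibility directly from the definition. So let $y\in L$ with $x<y$ be arbitrary; it suffices to show $w\leq y$, equivalently $w\wedge y=w$. Since $x\leq w$ and $x\leq y$, we have $x\leq w\wedge y$. If $w\wedge y=x$, then $x=w\wedge y$ with $x\neq w$ (as $x<w$) and $x\neq y$ (as $x<y$), contradicting meet--irreducibility; hence $x<w\wedge y$. But now $x<w\wedge y\leq w$, and $w$ covers $x$, so no element lies strictly between $x$ and $w$; this forces $w\wedge y=w$, i.e.\ $w\leq y$. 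As $y$ was an arbitrary element with $x<y$ and $w$ itself satisfies $x<w$, this shows $w=\min \{y\in L\ |\ x<y\}$, so $x$ is strictly meet--irreducible.

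The only genuine obstacle is the step where meet--irreducibility is used to exclude the case $w\wedge y=x$: without it one cannot promote an arbitrary cover to the global minimum above $x$. Indeed, if $x$ had two distinct covers, their meet would drop back to $x$, which is precisely the meet--reducibility that the hypothesis rules out; one could even observe along the way that meet--irreducibility already forces the cover $w$ to be unique, matching the ``unique successor'' language of Remark \ref{strictmeetirred}. Everything else is a routine manipulation of the covering relation, so no delicate completeness or finiteness assumption on $L$ is needed.
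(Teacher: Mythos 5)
Your proof is correct, and its substantive direction takes a genuinely different (and in fact more careful) route than the paper's. The paper proves (\ref{genmeetirred2})$\Rightarrow$(\ref{genmeetirred1}) in two steps: it applies meet--irreducibility to a hypothetical pair of distinct successors $a,b$ of $x$ (since then $x=a\wedge b$ with $x<a$ and $x<b$), concluding that the successor of $x$ is unique, and then invokes the first bullet of Remark \ref{strictmeetirred} to pass from ``unique successor'' to strict meet--irreducibility. You instead fix a single cover $w$ and show directly that $w=\min\{y\in L\mid x<y\}$: the same wedge computation, but applied to $w$ and an arbitrary $y>x$ (meet--irreducibility excludes $w\wedge y=x$, and the covering property then forces $w\wedge y=w$). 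Your version proves strictly more --- minimality of the cover rather than mere uniqueness --- and this is not an idle refinement: the implication ``unique successor implies strictly meet--irreducible,'' read with successor meaning cover, is false for arbitrary lattices without the meet--irreducibility hypothesis. For instance, adjoin to the chain $\{0\}\cup\{1/n\mid n\in\N^*\}$ an extra element $a$ with $0<a<1$, incomparable to $1/n$ for $n\geq 2$; then $0$ has the unique cover $a$, yet $\min\{y\mid 0<y\}$ does not exist (of course $0=a\wedge (1/2)$ is not meet--irreducible there, which is why the lemma itself is safe). So the paper's proof buys brevity by leaning on the remark, whose converse direction is only legitimate in the presence of meet--irreducibility, whereas your direct minimality argument is self--contained and makes that dependence explicit. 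Your preliminary observation that ``has successors'' must be read as ``has a cover'' (with $\R$ as the counterexample to the weaker reading) is also correct and consistent with how the paper uses the term in the proof of Proposition \ref{meetirred}.
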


\begin{proof} (\ref{genmeetirred1})$\Rightarrow $(\ref{genmeetirred2}): By Remark \ref{strictmeetirred}.

\noindent (\ref{genmeetirred2})$\Rightarrow $(\ref{genmeetirred1}): If $a$ and $b$ would be two distinct successors of $x$ in $L$, then we would have $x=a\wedge b$, $x<a$ and $x<b$, which would contradict the fact that $x$ is meet--irreducible. Thus $x$ has a unique successor in $L$, which means that $x$ is strictly meet--irreducible by Remark \ref{strictmeetirred}.\end{proof}

\begin{proposition} If $[x)$ is finite, then the following are equivalent:\begin{enumerate}
\item\label{meetirred1} $x$ is strictly meet--irreducible in $L$;
\item\label{meetirred2} $x$ is meet--irreducible in $L$ and $x\neq 1$.\end{enumerate}\label{meetirred}\end{proposition}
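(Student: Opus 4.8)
The plan is to reduce everything to Lemma \ref{genmeetirred}, which already characterises strict meet--irreducibility as the conjunction of meet--irreducibility with the existence of a successor of $x$ in $L$. Thus, to establish the equivalence (\ref{meetirred1})$\Leftrightarrow $(\ref{meetirred2}), it suffices to prove that, under the hypothesis that $[x)$ is finite, $x$ has a successor in $L$ if and only if $x\neq 1$; the common conjunct ``$x$ is meet--irreducible'' then carries over verbatim. I expect the implication (\ref{meetirred1})$\Rightarrow $(\ref{meetirred2}) to be immediate and to require no finiteness: by Remark \ref{strictmeetirred} strict meet--irreducibility already yields meet--irreducibility, and the successor $x^+$ witnesses $x<x^+$, so $x$ cannot be the greatest element of $L$, that is $x\neq 1$.

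For the substantial implication (\ref{meetirred2})$\Rightarrow $(\ref{meetirred1}) I would argue as follows. Assume $x$ is meet--irreducible and $x\neq 1$, i.e. $x$ is not the greatest element of $L$. First I would record the purely lattice--theoretic fact that in any lattice a maximal element is automatically the greatest one: for any $z\in L$ one has $x\vee z\geq x$, so maximality of $x$ would force $x\vee z=x$ and hence $z\leq x$. Consequently $x\neq 1$ means precisely that $x$ is not maximal, so $\{y\in L\ |\ x<y\}=[x)\setminus \{x\}$ is non--empty. Since $[x)$ is finite, this set is a non--empty finite poset and therefore has a minimal element $c$; by minimality nothing lies strictly between $x$ and $c$, so $c$ is a successor of $x$. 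Thus $x$ is meet--irreducible and has a successor, and Lemma \ref{genmeetirred} gives that $x$ is strictly meet--irreducible.

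The only real obstacle is this last passage from ``there exists something strictly above $x$'' to ``there exists an immediate successor of $x$'', and this is exactly the point at which the finiteness of $[x)$ is indispensable: in a general lattice the filter $[x)$ may contain elements above $x$ while admitting no cover of $x$ (an infinite strictly descending chain down towards $x$), in which case $\{y\in L\ |\ x<y\}$ has no minimal element and $x$ fails to be strictly meet--irreducible even though $x\neq 1$. Finiteness removes this pathology by guaranteeing a minimal element. I would also take a moment to read ``$x\neq 1$'' as ``$x$ is not the greatest element of $L$'', so that both the statement and the proof remain correct whether or not $L$ possesses a top element.
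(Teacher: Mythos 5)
Your proof is correct and takes essentially the same route as the paper: both reduce the statement to Lemma \ref{genmeetirred} and use the finiteness of $[x)$ to show that $x\neq 1$ guarantees a successor of $x$ (the paper phrases this as the observation that, $[x)$ being finite, $L$ has a $1$ and $1$ is the only element of $[x)$ without successors in $L$). Your minimal--element argument merely spells out in detail the fact the paper cites without proof.
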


\begin{proof} Clearly, if $L$ has finite filters, then $L$ has a $1$. Now apply Lemma \ref{genmeetirred} and the fact that, if $[x)$ is finite, then $1$ is the only element of $[x)$ without successors in $L$.\end{proof}

\begin{corollary} If the lattice $L$ is finite, then the following are equivalent:\begin{enumerate}
\item\label{cormeetirred1} $x$ is strictly meet--irreducible in $L$;
\item\label{cormeetirred2} $x$ is meet--irreducible in $L$ and $x\neq 1$.\end{enumerate}\label{cormeetirred}\end{corollary}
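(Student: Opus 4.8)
The plan is to recognize this as an immediate specialization of Proposition \ref{meetirred}. First I would observe that, since $L$ is finite, the principal filter $[x)=\{y\in L\ |\ x\leq y\}$ is a subset of the finite set $L$, and hence is itself finite. This is the only hypothesis required in order to invoke Proposition \ref{meetirred}, so essentially all the work reduces to this single set--theoretic observation.

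Next, I would apply Proposition \ref{meetirred} directly to this $x$: because $[x)$ is finite, that proposition yields that $x$ is strictly meet--irreducible in $L$ if and only if $x$ is meet--irreducible in $L$ and $x\neq 1$, which is precisely the desired equivalence between (\ref{cormeetirred1}) and (\ref{cormeetirred2}). For condition (\ref{cormeetirred2}) to be meaningful we need $L$ to possess a greatest element $1$, but this is automatic: any non--trivial finite lattice has a $1$, and in fact the proof of Proposition \ref{meetirred} already records that a lattice with a finite filter $[x)$ has a $1$.

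I expect no genuine obstacle here; the entire content lies in the remark that finiteness of $L$ forces finiteness of $[x)$. The corollary is thus obtained merely by strengthening the hypothesis of Proposition \ref{meetirred} from the local condition ``the filter $[x)$ is finite'' to the global, more readily verifiable condition ``the whole lattice $L$ is finite'', and then reading off the conclusion unchanged.
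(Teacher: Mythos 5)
Your proof is correct and matches the paper's intended argument exactly: the paper states Corollary \ref{cormeetirred} without a separate proof precisely because, as you observe, finiteness of $L$ forces finiteness of $[x)$, so Proposition \ref{meetirred} applies verbatim. Your additional remark that the existence of $1$ is automatic (already noted in the proof of Proposition \ref{meetirred}) is accurate and completes the check that condition (\ref{cormeetirred2}) is well posed.
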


The following characterization for the primality of congruences is, up to a point, similar to the one from \cite[Proposition 1.2]{agl}, so we may say that this is simply a tinting of this result of P. Agliano:

\begin{proposition} Assume that ${\cal C}$ is congruence--modular, and let $\phi \in {\rm Con}(A)$ such that $[\phi ,\phi ]=\phi $. Then:\begin{enumerate}
\item\label{erhardgen1} if $\phi $ is strictly meet--irreducible and $[\phi ^+,\phi ^+]_A\neq \phi $, then $\phi \in {\rm Spec}(A)$;
\item\label{erhardgen2} if $\phi \in {\rm Spec}(A)$, then $\phi $ is proper and meet--irreducible and $[\alpha ,\beta ]_A\neq \phi $ for any $\alpha ,\beta \in {\rm Con}(A)$ such that $\phi \subsetneq \alpha $ and $\phi \subsetneq \beta $;
\item\label{erhardgen3} if $[\phi )$ is finite, then: $\phi \in {\rm Spec}(A)$ iff $\phi $ is strictly meet--irreducible and $[\phi ^+,\phi ^+]_A\neq \phi $ iff $\phi $ is proper and meet--irreducible and $[\alpha ,\beta ]_A\neq \phi $ for any $\alpha ,\beta \in {\rm Con}(A)$ such that $\phi \subsetneq \alpha $ and $\phi \subsetneq \beta $;
\item\label{erhardgen4} if ${\rm Con}(A)$ is finite, then: $\phi \in {\rm Spec}(A)$ iff $\phi $ is strictly meet--irreducible and $[\phi ^+,\phi ^+]_A\neq \phi $ iff $\phi $ is proper and meet--irreducible and $[\alpha ,\beta ]_A\neq \phi $ for any $\alpha ,\beta \in {\rm Con}(A)$ such that $\phi \subsetneq \alpha $ and $\phi \subsetneq \beta $.\end{enumerate}\label{erhardgen}\end{proposition}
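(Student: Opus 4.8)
The plan is to establish (\ref{erhardgen1}) and (\ref{erhardgen2}) directly from the definition of prime congruence together with the commutator properties collected in Proposition \ref{1.3}, and then to obtain the equivalences (\ref{erhardgen3}) and (\ref{erhardgen4}) as a cycle of implications closed by the finiteness hypotheses via Proposition \ref{meetirred} and Corollary \ref{cormeetirred}. For (\ref{erhardgen1}), I would assume $\phi$ strictly meet--irreducible with $[\phi ^+,\phi ^+]_A\neq \phi $ and argue by contradiction. Since $\phi $ has a successor it satisfies $\phi \neq \nabla _A$, so $\phi $ is proper. Suppose $[\alpha ,\beta ]_A\subseteq \phi $ but $\alpha \not\subseteq \phi $ and $\beta \not\subseteq \phi $; then $\phi \subsetneq \phi \vee \alpha $ and $\phi \subsetneq \phi \vee \beta $, so by $\phi ^+=\min \{y\ |\ \phi <y\}$ (Remark \ref{strictmeetirred}) we get $\phi ^+\subseteq \phi \vee \alpha $ and $\phi ^+\subseteq \phi \vee \beta $. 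Monotonicity of the commutator gives $[\phi ^+,\phi ^+]_A\subseteq [\phi \vee \alpha ,\phi \vee \beta ]_A$, and expanding the right--hand side by distributivity over joins yields $[\phi ,\phi ]_A\vee [\phi ,\beta ]_A\vee [\alpha ,\phi ]_A\vee [\alpha ,\beta ]_A$. Each join--summand lies in $\phi $: the first equals $\phi $ by hypothesis, the two middle ones because the commutator is contained in the intersection, and the last by assumption. Hence $[\phi ^+,\phi ^+]_A\subseteq \phi $; combined with $\phi =[\phi ,\phi ]_A\subseteq [\phi ^+,\phi ^+]_A$ (monotonicity again), this forces $[\phi ^+,\phi ^+]_A=\phi $, a contradiction. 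So $\phi \in {\rm Spec}(A)$.

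For (\ref{erhardgen2}) I would read everything off from primality. Properness is part of the definition. For meet--irreducibility, if $\phi =\alpha \cap \beta $ then $[\alpha ,\beta ]_A\subseteq \alpha \cap \beta =\phi $, so primality gives $\alpha \subseteq \phi $ or $\beta \subseteq \phi $; since $\phi \subseteq \alpha $ and $\phi \subseteq \beta $, this yields $\phi =\alpha $ or $\phi =\beta $. For the final clause, if $\phi \subsetneq \alpha $, $\phi \subsetneq \beta $ and $[\alpha ,\beta ]_A=\phi $ held simultaneously, then $[\alpha ,\beta ]_A\subseteq \phi $ would force $\alpha \subseteq \phi $ or $\beta \subseteq \phi $, contradicting $\phi \subsetneq \alpha $ and $\phi \subsetneq \beta $; hence $[\alpha ,\beta ]_A\neq \phi $.

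For (\ref{erhardgen3}), the three conditions close into a cycle: (\ref{erhardgen1}) gives ``strictly meet--irreducible with $[\phi ^+,\phi ^+]_A\neq \phi $'' $\Rightarrow $ ``prime'', and (\ref{erhardgen2}) gives ``prime'' $\Rightarrow $ ``proper, meet--irreducible, with the commutator clause''. It remains to close the loop, and this is where the finiteness of $[\phi )$ enters: $\phi $ is proper, so $\phi \neq \nabla _A$, and applying Proposition \ref{meetirred} in the lattice ${\rm Con}(A)$ (whose top is $\nabla _A$) to the meet--irreducible proper element $\phi $ shows $\phi $ is strictly meet--irreducible, so $\phi ^+$ exists; taking $\alpha =\beta =\phi ^+$ in the commutator clause, which is legitimate as $\phi \subsetneq \phi ^+$, gives exactly $[\phi ^+,\phi ^+]_A\neq \phi $. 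Part (\ref{erhardgen4}) is then immediate, since ${\rm Con}(A)$ finite makes $[\phi )$ finite and (\ref{erhardgen3}) applies; alternatively one may invoke Corollary \ref{cormeetirred} in place of Proposition \ref{meetirred}.

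The main obstacle is the commutator computation in (\ref{erhardgen1}): one must use only the tools actually at hand from Proposition \ref{1.3} --- monotonicity, containment in the intersection, and distributivity over joins --- and, crucially, exploit the standing hypothesis $[\phi ,\phi ]_A=\phi $ to make the fourfold join of commutators collapse back into $\phi $ rather than merely into $\phi ^+$. Once that containment is secured, the sandwich $\phi \subseteq [\phi ^+,\phi ^+]_A\subseteq \phi $ delivers the contradiction; the remaining steps are routine bookkeeping of inclusions.
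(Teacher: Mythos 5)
Your proof is correct and takes essentially the same route as the paper's: the same fourfold expansion $[\alpha\vee\phi,\beta\vee\phi]_A=[\alpha,\beta]_A\vee[\alpha,\phi]_A\vee[\phi,\beta]_A\vee[\phi,\phi]_A$ collapsing into $\phi$ via the standing hypothesis $[\phi,\phi]_A=\phi$, sandwiched against $\phi=[\phi,\phi]_A\subseteq[\phi^+,\phi^+]_A$ in part (i), the same direct reading of primality in part (ii), and the same closing of the cycle in parts (iii)--(iv) via Proposition \ref{meetirred} and the choice $\alpha=\beta=\phi^+$.
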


\begin{proof} (\ref{erhardgen1}) Assume that $\phi $ is strictly meet--irreducible and $[\phi ^+,\phi ^+]_A\neq \phi $, and let $\alpha ,\beta \in {\rm Con}(A)$ such that $[\alpha ,\beta ]_A\subseteq \phi $. Then, by Proposition \ref{1.3}, $[\alpha \vee \phi ,\beta \vee \phi ]_A=[\alpha ,\beta ]_A\vee [\alpha ,\phi ]_A\vee [\phi ,\beta ]_A\vee [\phi ,\phi ]_A\subseteq \phi \vee (\alpha \cap \phi )\vee (\phi \vee \beta )\vee \phi \subseteq \phi \vee \phi \vee \phi \vee \phi =\phi $, and we have $\phi \subseteq \alpha \vee \phi $ and $\phi \subseteq \beta \vee \phi $. Assume by absurdum that $\phi \subsetneq \alpha \vee \phi $ and $\phi \subsetneq \beta \vee \phi $, so that $\phi ^+ \subseteq \alpha \vee \phi $ and $\phi ^+\subseteq \beta \vee \phi $. Then, by Proposition \ref{1.3}, $\phi =[\phi ,\phi ]_A\subseteq [\phi ^+,\phi ^+]_A\subseteq [\alpha \vee \phi ,\beta \vee \phi ]_A\subseteq \phi $, hence $[\phi ^+,\phi ^+]_A=\phi $; we have a contradiction. Thus $\alpha \vee \phi =\phi $ or $\beta \vee \phi =\phi $, that is $\alpha \subseteq \phi $ or $\beta \subseteq \phi $. Hence $\phi \in {\rm Spec}(A)$.

\noindent (\ref{erhardgen2}) Assume that $\phi \in {\rm Spec}(A)$, so $\phi \neq \nabla _A$. Let $\alpha ,\beta \in {\rm Con}(A)$ such that $\alpha \cap \beta =\phi $. Then $\phi \subseteq \alpha $, $\phi \subseteq \beta $ and $[\alpha ,\beta ]_A\subseteq \alpha \cap \beta =\phi $. Since $\phi \in {\rm Spec}(A)$, it follows that $\alpha \subseteq \phi $ or $\beta \subseteq \phi $, thus $\alpha =\phi $ or $\beta =\phi $. Hence $\phi $ is meet--irreducible. Now let $\alpha ,\beta \in {\rm Con}(A)$ such that $\phi \subsetneq \alpha $ and $\phi \subsetneq \beta $, and assume by absurdum that $[\alpha ,\beta ]_A=\phi \subseteq \phi $. Since $\phi \in {\rm Spec}(A)$, we have $\alpha \subseteq \phi $ or $\beta \subseteq \phi $, thus $\alpha =\phi $ or $\beta =\phi $, and $\alpha \neq \phi $ and $\beta \neq \phi $, a contradiction. Thus $[\alpha ,\beta ]_A\neq \phi $.

\noindent (\ref{erhardgen3}) By (\ref{erhardgen1}), (\ref{erhardgen2}), Propositions \ref{meetirred} and \ref{1.3} and the fact that, when $\phi ^+$ exists, we have $\phi \subsetneq \phi ^+$ and, for any $\alpha \in {\rm Con}(A)$, $\phi \subsetneq \alpha $ iff $\phi ^+\subseteq \alpha $.

\noindent (\ref{erhardgen4}) By (\ref{erhardgen3}).\end{proof}

\begin{corollary} Assume that ${\cal C}$ is congruence--distributive, or that it is is congruence--modular and the commutator in $A$ equals the intersection, and let $\phi \in {\rm Con}(A)$. Then:\begin{enumerate}
\item\label{mypartic1} if $\phi $ is strictly meet--irreducible, then $\phi \in {\rm Spec}(A)$;
\item\label{mypartic2} if $\phi \in {\rm Spec}(A)$, then $\phi $ is proper and meet--irreducible; 
\item\label{mypartic3} if $[\phi )$ is finite, then: $\phi \in {\rm Spec}(A)$ iff $\phi $ is strictly meet--irreducible iff $\phi $ is proper and meet--irreducible;
\item\label{mypartic4} if ${\rm Con}(A)$ is finite, then: $\phi \in {\rm Spec}(A)$ iff $\phi $ is strictly meet--irreducible iff $\phi $ is proper and meet--irreducible.\end{enumerate}\label{mypartic}\end{corollary}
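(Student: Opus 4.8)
The plan is to reduce the entire statement to Proposition \ref{erhardgen}, whose hypotheses are automatically satisfied here. The first and central observation, valid in \emph{both} cases of the assumption, is that the commutator in $A$ coincides with the intersection of congruences: in the congruence--distributive case this is Theorem \ref{distrib}, and in the congruence--modular case it is assumed outright. Consequently $[\psi ,\chi ]_A=\psi \cap \chi $ for all $\psi ,\chi \in {\rm Con}(A)$; in particular $[\phi ,\phi ]_A=\phi \cap \phi =\phi $, so the standing hypothesis $[\phi ,\phi ]_A=\phi $ of Proposition \ref{erhardgen} is met, and each of its items may be invoked freely.

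For item (\ref{mypartic1}) I would apply Proposition \ref{erhardgen}(\ref{erhardgen1}), so that it remains only to verify its supplementary condition $[\phi ^+,\phi ^+]_A\neq \phi $. Since $\phi $ is strictly meet--irreducible, $\phi ^+$ exists and $\phi \subsetneq \phi ^+$ by Remark \ref{strictmeetirred}; hence $[\phi ^+,\phi ^+]_A=\phi ^+\cap \phi ^+=\phi ^+\neq \phi $, and primality follows at once. Item (\ref{mypartic2}) is immediate, being exactly the ``proper and meet--irreducible'' part of the conclusion of Proposition \ref{erhardgen}(\ref{erhardgen2}).

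For item (\ref{mypartic3}) I would start from the two chains of equivalences furnished by Proposition \ref{erhardgen}(\ref{erhardgen3}) and show that, once the commutator is the intersection, the two extra commutator conditions appearing there become vacuous. Indeed, when $\phi $ is strictly meet--irreducible the condition $[\phi ^+,\phi ^+]_A\neq \phi $ holds automatically, exactly as in the previous paragraph; and when $\phi $ is meet--irreducible, for any $\alpha ,\beta \in {\rm Con}(A)$ with $\phi \subsetneq \alpha $ and $\phi \subsetneq \beta $ one has $[\alpha ,\beta ]_A=\alpha \cap \beta $, which cannot equal $\phi $, since $\alpha \cap \beta =\phi $ together with meet--irreducibility would force $\alpha =\phi $ or $\beta =\phi $. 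Thus both augmented conditions collapse to the bare meet--irreducibility hypotheses, and Proposition \ref{erhardgen}(\ref{erhardgen3}) delivers the desired three--way equivalence between primality, strict meet--irreducibility, and being proper and meet--irreducible. Item (\ref{mypartic4}) then follows from (\ref{mypartic3}), because if ${\rm Con}(A)$ is finite then so is the filter $[\phi )$.

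The only point requiring any care --- and hence the ``main obstacle'', though a mild one --- is the bookkeeping in (\ref{mypartic3}): one must check that substituting the intersection for the commutator genuinely renders both supplementary conditions of Proposition \ref{erhardgen}(\ref{erhardgen3}) automatic, the first from $\phi \subsetneq \phi ^+$ and the second from meet--irreducibility, so that the two original equivalences simplify to the stated clean ones. Everything else is a direct citation of the already established Proposition \ref{erhardgen}.
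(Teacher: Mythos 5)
Your proposal is correct and follows essentially the same route as the paper: both reduce the corollary to Proposition \ref{erhardgen} via Theorem \ref{distrib} (respectively the hypothesis that the commutator equals the intersection), noting that $[\phi ,\phi ]_A=\phi $ holds automatically, that $[\phi ^+,\phi ^+]_A=\phi ^+\neq \phi $ whenever $\phi ^+$ exists, and that $[\alpha ,\beta ]_A=\phi $ amounts to $\alpha \cap \beta =\phi $, which meet--irreducibility rules out. Your write--up merely makes explicit the bookkeeping that the paper's one--line proof leaves implicit.
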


\begin{proof} By Proposition \ref{erhardgen} and Theorem \ref{distrib}, which ensures us that $[\phi ,\phi ]_A=\phi \cap \phi =\phi $, that, when $\phi ^+$ exists, $[\phi ^+,\phi ^+]_A=\phi ^+\cap \phi ^+=\phi ^+\neq \phi $, and, for any $\alpha ,\beta \in {\rm Con}(A)$, $[\alpha ,\beta ]_A=\phi $ means that $\alpha \cap \beta =\phi $.

Note that (\ref{mypartic2}) is also a direct consequence of \cite[Proposition 1.2]{agl}.\end{proof}

\begin{proposition} Assume that ${\cal C}$ is congruence--distributive, or that it is is congruence--modular and the commutator in $A$ equals the intersection. If ${\rm Con}(A)$ is a Boolean algebra, then ${\rm Spec}(A)={\rm Max}(A)$.\label{conbool}\end{proposition}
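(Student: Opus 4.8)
The plan is to carry out the whole argument inside the lattice ${\rm Con}(A)$, regarded via the hypothesis as a Boolean algebra with bottom $\Delta _A$, top $\nabla _A$, meet $\cap $ and join $\vee $. First I would record the two translations that turn the statement into a purely lattice--theoretic one: by definition, ${\rm Max}(A)$ consists exactly of the coatoms of ${\rm Con}(A)$, i.e.\ of those $\theta \neq \nabla _A$ with $\{\psi \in {\rm Con}(A)\ |\ \theta \subsetneq \psi \}=\{\nabla _A\}$; and, since the hypothesis here is precisely the one of Corollary \ref{mypartic}, primality of a congruence is controlled by (strict) meet--irreducibility through parts (\ref{mypartic1}) and (\ref{mypartic2}) of that corollary. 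I would then prove the set equality by the two inclusions, the second of which carries the only real content.

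For ${\rm Max}(A)\subseteq {\rm Spec}(A)$: if $\theta $ is a coatom, then the only element strictly above it is $\nabla _A$, so $\min \{\psi \in {\rm Con}(A)\ |\ \theta \subsetneq \psi \}=\nabla _A$ exists and $\theta $ is strictly meet--irreducible with $\theta ^+=\nabla _A$; Corollary \ref{mypartic}(\ref{mypartic1}) then yields $\theta \in {\rm Spec}(A)$. (This inclusion is of course already contained in Lemma \ref{folclor}, but it falls out directly here.)

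For ${\rm Spec}(A)\subseteq {\rm Max}(A)$: let $\phi \in {\rm Spec}(A)$; by Corollary \ref{mypartic}(\ref{mypartic2}), $\phi $ is proper and meet--irreducible, so it suffices to show that a proper meet--irreducible element of a Boolean algebra is a coatom. This is the step I expect to be the main (if modest) obstacle, and I would settle it by the standard complementation trick: assuming $\phi $ is not a coatom, choose $\psi $ with $\phi \subsetneq \psi \subsetneq \nabla _A$, let $\overline{\psi }$ be the Boolean complement of $\psi $ in ${\rm Con}(A)$, and set $\alpha =\psi $ and $\beta =\phi \vee \overline{\psi }$. Distributivity and $\phi \subseteq \psi $ give $\alpha \cap \beta =(\psi \cap \phi )\vee (\psi \cap \overline{\psi })=\phi \vee \Delta _A=\phi $, while $\alpha =\psi \neq \phi $, and $\beta =\phi $ would force $\overline{\psi }\subseteq \phi \subseteq \psi $, hence $\overline{\psi }=\overline{\psi }\cap \psi =\Delta _A$ and $\psi =\nabla _A$, contradicting $\psi \subsetneq \nabla _A$. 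Thus $\phi =\alpha \cap \beta $ with $\phi \neq \alpha $ and $\phi \neq \beta $, contradicting meet--irreducibility; so $\phi $ is a coatom, that is, $\phi \in {\rm Max}(A)$.

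Combining the two inclusions gives ${\rm Spec}(A)={\rm Max}(A)$. I would finally note that this argument needs no nondegeneracy or chain assumption on the Boolean algebra ${\rm Con}(A)$: if it happens to have no coatoms at all (for instance if it is atomless), then both sides are empty, the second inclusion showing in that case that there are simply no proper meet--irreducible congruences, hence no prime ones.
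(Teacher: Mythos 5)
Your proof is correct, but it takes a genuinely different route from the paper's. The paper proves Proposition \ref{conbool} in a single chain of equivalences: using Remark \ref{primdistrib} it identifies $\phi \in {\rm Spec}(A)$ with the condition that the principal ideal $(\phi ]$ is a prime ideal of the Boolean algebra ${\rm Con}(A)$, then invokes the well--known fact that prime and maximal ideals of a Boolean algebra coincide, and finally translates ``$(\phi ]$ is a maximal ideal'' back into ``$\phi $ is a co--atom'', i.e.\ $\phi \in {\rm Max}(A)$ --- so both inclusions fall out simultaneously, at the cost of citing the prime--ideal fact as a black box. You instead route everything through meet--irreducibility: Corollary \ref{mypartic} gives you the two translations (strictly meet--irreducible $\Rightarrow $ prime, and prime $\Rightarrow $ proper and meet--irreducible), and the heart of your argument is a self--contained, element--level proof via the complementation trick that a proper meet--irreducible element of a Boolean algebra is a coatom --- your computation $\psi \cap (\phi \vee \overline{\psi })=(\psi \cap \phi )\vee (\psi \cap \overline{\psi })=\phi $ and the check that $\phi \vee \overline{\psi }\neq \phi $ are both correct, and this is precisely the element--wise content of the ideal--theoretic fact the paper imports. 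What your approach buys is elementarity and self--containment (nothing ``well known'' is assumed, and your closing observation that the statement holds vacuously when ${\rm Con}(A)$ has no coatoms is a nice sanity check); what the paper's buys is brevity. Your ${\rm Max}(A)\subseteq {\rm Spec}(A)$ inclusion, as you note, also follows from Lemma \ref{folclor} or, without any finiteness on $\nabla _A$, from Lemma \ref{specdistrib}, but deriving it from strict meet--irreducibility of coatoms via Corollary \ref{mypartic}(\ref{mypartic1}) is equally valid under the stated hypotheses.
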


\begin{proof} It is well known that the prime ideals of a Boolean algebra coincide to its maximal ideals. Let $\phi \in {\rm Con}(A)$. Then, by Remark \ref{primdistrib}, the definition of a prime element and that of a prime ideal, we have the following: $\phi \in {\rm Spec}(A)$ iff, for all $\alpha ,\beta \in {\rm Con}(A)$, $\alpha \cap \beta \subseteq \phi $ iff $\alpha \subseteq \phi $ or $\beta \subseteq \phi $, iff, for all $\alpha ,\beta \in {\rm Con}(A)$, $\alpha \cap \beta \in (\phi ]$ iff $\alpha \in (\phi ]$ or $\beta \in (\phi ]$, iff $(\phi ]$ is a prime ideal of ${\rm Con}(A)$ iff $(\phi ]$ is a maximal ideal of ${\rm Con}(A)$ iff $\phi $ is a co--atom of the Boolean algebra ${\rm Con}(A)$ iff $\phi \in {\rm Max}(A)$. Therefore ${\rm Spec}(A)={\rm Max}(A)$.\end{proof}

In the examples that follow, we shall use Remark \ref{convex} to determine the congruences of the lattices, and Corollary \ref{mypartic}, (\ref{mypartic4}), and Remark \ref{strictmeetirred}, to determine their prime congruences, which, since their lattices of congruences are finite, are exactly the elements of these lattices which have unique successors in these lattices. The configurations of their lattices of congruences will give us their maximal congruences.

\section{Admissible and Max--admissible Morphisms}
\label{admmorf}

In \cite{gulo}, we study properties Going Up and Lying Over in Congruence--modular Algebras. The study of these properties in this general context necessitates a preliminary study of a certain kind of morphisms we have called {\em admissible morphisms}. Here we just recall their definition, and we also define another kind of admissibility for morphisms, then we give some examples. We shall continue the study of these kinds of morphisms in the following sections.

Following \cite{gulo}, if ${\cal C}$ is congruence--modular, then we call $f$ an {\em admissible morphism} iff $f^*(\psi )\in {\rm Spec}(A)$ for all $\psi \in {\rm Spec}(B)$. By analogy, we call $f$ a {\em Max--admissible morphism} iff $f^*(\psi )\in {\rm Max}(A)$ for all $\psi \in {\rm Max}(B)$. These two notions are non--trivial and independent of each other, as shown by the following example:

\begin{example} Let ${\cal L}_2^2$, ${\cal P}$ and ${\cal D}$ have the elements denoted as below, $i:{\cal L}_2^2\rightarrow {\cal P}$ and $j:{\cal L}_2^2\rightarrow {\cal D}$ be the canonical bounded lattice embeddings and $g: {\cal P}\rightarrow {\cal L}_2^2$ and $h:{\cal P}\rightarrow {\cal D}$ be the bounded lattice morphisms given by the following tables:\vspace*{-20pt}

\begin{center}\begin{tabular}{cccccccc}
\begin{picture}(40,80)(0,0)
\put(30,25){\line(1,1){20}}
\put(30,25){\line(-1,1){20}}
\put(30,65){\line(1,-1){20}}
\put(30,65){\line(-1,-1){20}}
\put(30,25){\circle*{3}}
\put(10,45){\circle*{3}}
\put(3,42){$x$}
\put(50,45){\circle*{3}}
\put(53,42){$y$}
\put(30,65){\circle*{3}}
\put(28,15){$0$}
\put(28,68){$1$}
\put(25,0){${\cal L}_2^2$}

\end{picture}
&\hspace*{-17pt}
\begin{picture}(100,80)(0,0)
\put(101,45){\vector(-1,0){77}}
\put(63,48){$g$}
\put(0,10){\begin{tabular}{c|ccccc}
$u$ & $0$ & $x$ & $y$ & $z$ & $1$\\ \hline 
$g(u)$ & $0$ & $x$ & $y$ & $y$ & $1$\end{tabular}}
\end{picture}
&\hspace*{-10pt}
\begin{picture}(40,80)(0,0)
\put(30,25){\line(1,1){10}}
\put(30,25){\line(-1,1){20}}
\put(30,65){\line(1,-1){10}}
\put(30,65){\line(-1,-1){20}}
\put(40,35){\line(0,1){20}}
\put(30,25){\circle*{3}}
\put(10,45){\circle*{3}}
\put(3,43){$x$}
\put(40,35){\circle*{3}}
\put(43,33){$y$}
\put(30,65){\circle*{3}}
\put(40,55){\circle*{3}}
\put(43,53){$z$}
\put(26,0){${\cal P}$}
\put(28,15){$0$}
\put(28,68){$1$}
\end{picture}
&\hspace*{-20pt}
\begin{picture}(100,80)(0,0)
\put(15,45){\vector(1,0){83}}

\put(51,47){$h$}
\put(0,10){\begin{tabular}{c|ccccc}
$u$ & $0$ & $x$ & $y$ & $z$ & $1$\\ \hline 
$h(u)$ & $0$ & $x$ & $y$ & $z$ & $1$\end{tabular}}
\end{picture}
&\hspace*{-15pt}
\begin{picture}(40,80)(0,0)
\put(30,25){\line(0,1){40}}
\put(30,25){\line(1,1){20}}
\put(30,25){\line(-1,1){20}}
\put(30,65){\line(1,-1){20}}
\put(30,65){\line(-1,-1){20}}
\put(30,25){\circle*{3}}
\put(10,45){\circle*{3}}
\put(3,42){$x$}
\put(30,45){\circle*{3}}
\put(33,42){$y$}
\put(50,45){\circle*{3}}
\put(53,42){$z$}
\put(30,65){\circle*{3}}
\put(28,15){$0$}
\put(28,68){$1$}
\put(26,0){${\cal D}$}
\end{picture}
&\hspace*{5pt}
\begin{picture}(40,80)(0,0)
\put(30,25){\line(1,1){20}}
\put(30,25){\line(-1,1){20}}
\put(30,65){\line(1,-1){20}}
\put(30,65){\line(-1,-1){20}}
\put(30,25){\circle*{3}}
\put(10,45){\circle*{3}}
\put(3,43){$\rho $}
\put(50,45){\circle*{3}}
\put(53,43){$\sigma $}
\put(30,65){\circle*{3}}
\put(26,15){$\Delta _{{\cal L}_2^2}$}
\put(26,68){$\nabla _{{\cal L}_2^2}$}
\put(13,0){${\rm Con}({\cal L}_2^2)$}
\end{picture}
&\hspace*{5pt}
\begin{picture}(40,80)(0,0)
\put(20,45){\line(1,1){10}}
\put(20,45){\line(-1,1){10}}
\put(20,65){\line(1,-1){10}}
\put(20,65){\line(-1,-1){10}}
\put(20,45){\line(0,-1){15}}
\put(16,20){$\Delta _{\cal P}$}
\put(16,68){$\nabla _{\cal P}$}
\put(20,30){\circle*{3}}
\put(23,40){$\gamma $}
\put(2,53){$\alpha $}
\put(32,52){$\beta $}
\put(20,45){\circle*{3}}
\put(10,55){\circle*{3}}
\put(30,55){\circle*{3}}
\put(20,65){\circle*{3}}
\put(5,0){${\rm Con}({\cal P})$}
\end{picture}
&\hspace*{-20pt}
\begin{picture}(40,80)(0,0)
\put(20,30){\line(0,1){20}}
\put(20,30){\circle*{3}}
\put(20,50){\circle*{3}}
\put(16,20){$\Delta _{\cal D}$}
\put(16,53){$\nabla _{\cal D}$}
\put(6,0){${\rm Con}({\cal D})$}\end{picture}\end{tabular}\end{center}

${\rm Con}({\cal L}_2^2)=\{\Delta _{{\cal L}_2^2},\rho ,\sigma ,\nabla _{{\cal L}_2^2}\}\cong {\cal L}_2^2$, where $\rho =eq(\{0,x\},\{y,1\})$ and $\sigma =eq(\{0,y\},\{x,1\})$, so ${\rm Spec}({\cal L}_2^2)={\rm Max}({\cal L}_2^2)=\{\rho ,\sigma \}$. ${\rm Con}({\cal D})=\{\Delta _{\cal D},\nabla _{\cal D}\}\cong {\cal L}_2$, so ${\rm Spec}({\cal D})={\rm Max}({\cal D})=\{\Delta _{\cal D}\}$. See above the lattice of congruences of ${\cal P}$, where $\alpha =eq(\{0,y,z\},\{x,1\})$, $\beta =eq(\{0,x\},\{y,z,1\})$ and $\gamma =eq(\{0\},\{x\},\{y,z\},\{1\})$, and notice that ${\rm Spec}({\cal P})=\{\Delta _{\cal P},\alpha ,\beta \}$ and ${\rm Max}({\cal P})=\{\alpha ,\beta \}$.

$g^*(\rho )=\beta \in {\rm Max}({\cal P})\subset {\rm Spec}({\cal P})$ and $g^*(\sigma )=\alpha \in {\rm Max}({\cal P})\subset {\rm Spec}({\cal P})$, so $g$ is both admissible and Max--admissible.

$j^*(\Delta _{\cal D})=\Delta _{{\cal L}_2^2}\notin {\rm Spec}({\cal L}_2^2)={\rm Max}({\cal L}_2^2)$, thus $j$ is neither admissible, nor Max--admissible.

$i^*(\Delta _{\cal P})=\Delta _{{\cal L}_2^2}\notin {\rm Spec}({\cal L}_2^2)$, $i^*(\alpha )=\sigma \in {\rm Max}({\cal L}_2^2)$ and $i^*(\beta )=\rho \in {\rm Max}({\cal L}_2^2)$, so $i$ is Max--admissible and it is not admissible. 

$h^*(\Delta _{\cal D})=\Delta _{\cal P}\in {\rm Spec}({\cal P})\setminus {\rm Max}({\cal P})$, thus $h$ is admissible and it is not Max--admissible.

Because it will prove important later on, here is an example of a morphism which is both admissible and Max--admissible, but it is not surjective and does not have the co--domain given by a bounded distributive lattice or a lattice which can be obtained through the constructions in Proposition \ref{spec2cls} below: let $k:{\cal D}\rightarrow E$ be the canonical bounded lattice embedding of ${\cal D}$ into the lattice $E$ given by the following Hasse diagram, embedding which is clearly not surjective:\vspace*{-40pt}

\begin{center}\begin{tabular}{cccc}
\begin{picture}(40,80)(0,0)
\put(30,25){\line(0,1){40}}
\put(30,25){\line(1,1){20}}
\put(30,25){\line(-1,1){20}}
\put(30,65){\line(1,-1){20}}
\put(30,65){\line(-1,-1){20}}
\put(30,25){\circle*{3}}
\put(10,45){\circle*{3}}
\put(3,42){$x$}
\put(30,45){\circle*{3}}
\put(33,42){$y$}
\put(50,45){\circle*{3}}
\put(53,42){$z$}
\put(30,65){\circle*{3}}
\put(28,15){$0$}
\put(28,68){$1$}
\put(25,0){${\cal D}$}
\end{picture}
&\hspace*{10pt}
\begin{picture}(47,80)(0,0)
\put(13,45){\vector(1,0){42}}
\put(32,47){$k$}
\end{picture}
&\hspace*{10pt}
\begin{picture}(40,120)(0,0)
\put(40,20){\line(0,1){60}}
\put(40,20){\line(1,1){30}}
\put(40,20){\line(-1,1){30}}
\put(40,80){\line(1,-1){30}}
\put(40,80){\line(-1,-1){30}}
\put(40,20){\circle*{3}}
\put(40,40){\circle*{3}}
\put(40,60){\circle*{3}}
\put(40,80){\circle*{3}}
\put(10,50){\circle*{3}}
\put(70,50){\circle*{3}}
\put(3,48){$x$}
\put(33,38){$y$}
\put(43,57){$t$}
\put(73,48){$z$}
\put(38,10){$0$}
\put(38,84){$1$}
\put(37,-3){$E$}
\end{picture}
&\hspace*{55pt}
\begin{picture}(40,100)(0,0)
\put(20,25){\line(0,1){40}}
\put(20,25){\circle*{3}}
\put(20,45){\circle*{3}}
\put(20,65){\circle*{3}}
\put(16,15){$\Delta _E$}
\put(23,43){$\varepsilon $}
\put(16,68){$\nabla _E$}
\put(6,-2){${\rm Con}(E)$}
\end{picture}\end{tabular}\end{center}

${\rm Con}(E)=\{\Delta _E,\varepsilon ,\nabla _E\}\cong {\cal L}_3$, where $\varepsilon =eq(\{0\},\{x\},\{y,t\},\{z\},\{1\})$, so ${\rm Spec}(E)=\{\Delta _E,\varepsilon \}$ and ${\rm Max}(E)=\{\varepsilon \}$. $k^*(\Delta _E)=k^*(\varepsilon )=\Delta _{\cal D}\in {\rm Spec}({\cal D})={\rm Max}({\cal D})$, so $k$ is admissible and Max--admissible.\label{fiectip}\end{example}

\begin{remark} Because we shall use this later, let us also note that ${\rm Con}({\cal L}_2)=\{\Delta _{{\cal L}_2},\nabla _{{\cal L}_2}\}\cong {\cal L}_2$. Just as ${\cal L}_2^2$, this is a finite Boolean lattice, thus it is isomorphic to its lattice of congruences.\label{lant2}\end{remark}

\section{Congruences in Direct Products of Algebras and Ordinal Sums of Bounded Lattices}
\label{dirprodordsum}

The first results in this section refer to direct products of algebras with the property that all their congruences are products of congruences of the terms of those direct products; for such direct products, we determine the form of the prime, maximal and two--class congruences; in the following sections, it will become clear why these kinds of congruences are important and related to each other. Then we do the same for finite ordinal sums of bounded lattices, and we prove that admissibility and Max--admissibility are preserved by finite direct products and, in the case of lattices, also by finite ordinal sums. 

\begin{remark}\begin{itemize}
\item If $(A_i)_{i\in I}$ is a non--empty family of sets and $\alpha _i\in {\rm Eq}(A_i)$ for all $i\in I$, then, as stated in Section \ref{preliminaries}, $\displaystyle \prod _{i\in I}\alpha _i\in {\rm Eq}(\prod _{i\in I}A_i)$, and it is immediate that $\displaystyle |(\prod _{i\in I}A_i)/(\prod _{i\in I}\alpha _i)|=|\prod _{i\in I}A_i/\alpha _i|=\prod _{i\in I}|A_i/\alpha _i|$, because the map $\displaystyle (a_i/\alpha _i)_{i\in I}\mapsto (a_i)_{i\in I}/(\prod _{i\in I}\alpha _i)$ sets a bijection from $\displaystyle \prod _{i\in I}A_i/\alpha _i$ to $\displaystyle (\prod _{i\in I}A_i)/(\prod _{i\in I}\alpha _i)$.
\item Furthermore, if $(A_i)_{i\in I}$ is a family of algebras from ${\cal C}$ and $\alpha _i\in {\rm Con}(A_i)$ for all $i\in I$, then, clearly, $\displaystyle \prod _{i\in I}\alpha _i\in {\rm Con}(\prod _{i\in I}A_i)$ and the map defined above is an isomorphism between the algebras $\displaystyle \prod _{i\in I}A_i/\alpha _i$ and $\displaystyle (\prod _{i\in I}A_i)/(\prod _{i\in I}\alpha _i)$.\end{itemize}\label{prods}\end{remark}

\begin{lemma} If $(A_i)_{i\in I}$ is a non--empty family of algebras from ${\cal C}$, $\displaystyle A=\prod _{i\in I}A_i$ and $\displaystyle {\rm Con}(A)=\{\prod _{i\in I}\theta _i\ |\ (\forall \, i\in I)\, (\theta _i\in {\rm Con}(A_i))$, then:\begin{enumerate}
\item\label{specprod1} $\displaystyle {\rm Con}_2(A)=\bigcup _{i\in I}\{\theta _i\times \prod _{j\in I\setminus \{i\}}\nabla _{A_j}\ |\ \theta _i\in {\rm Con}_2(A_i)\}$;
\item\label{specprod2} $\displaystyle {\rm Max}(A)=\bigcup _{i\in I}\{\theta _i\times \prod _{j\in I\setminus \{i\}}\nabla _{A_j}\ |\ \theta _i\in {\rm Max}(A_i)\}$; ${\rm Max}(A)={\rm Con}_2(A)$ iff ${\rm Max}(A_i)={\rm Con}_2(A_i)$ for all $i\in I$;
\item\label{specprod3} if ${\cal C}$ is congruence--modular and, for any families $(\alpha _i)_{i\in I}$ and $(\beta _i)_{i\in I}$ such that $\alpha _i,\beta _i\in {\rm Con}(A_i)$ for all $i\in I$, we have $\displaystyle [\prod _{i\in I}\alpha _i,\prod _{i\in I}\beta _i]_A=\prod _{i\in I}[\alpha _i,\beta _i]_{A_i}$, then: $\displaystyle {\rm Spec}(A)=\bigcup _{i\in I}\{\theta _i\times \prod _{j\in I\setminus \{i\}}\nabla _{A_j}\ |\ \theta _i\in {\rm Spec}(A_i)\}$; ${\rm Spec}(A)={\rm Con}_2(A)$ iff ${\rm Spec}(A_i)={\rm Con}_2(A_i)$ for all $i\in I$; ${\rm Spec}(A)={\rm Max}(A)$ iff ${\rm Spec}(A_i)={\rm Max}(A_i)$ for all $i\in I$.\end{enumerate}\label{specprod}\end{lemma}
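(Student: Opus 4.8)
The plan is to push everything down to the coordinatewise description of ${\rm Con}(A)$ supplied by the hypothesis. First I would record two facts used throughout. Since a direct product of binary relations is contained in another precisely when the containment holds in each coordinate --- the nonobvious implication being obtained by transporting a single pair $(a_k,b_k)\in\theta_k$ into $A$ using constant (hence $\theta_j$--related) coordinates for $j\neq k$, which is legitimate because the $A_i$ are non--empty --- we get, for $\theta_i,\theta_i'\in{\rm Con}(A_i)$, that $\prod_{i\in I}\theta_i\subseteq\prod_{i\in I}\theta_i'$ iff $\theta_i\subseteq\theta_i'$ for every $i\in I$; in particular the decomposition of a congruence of $A$ as a product is unique. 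Secondly, by Remark \ref{prods}, $|A/\prod_{i\in I}\theta_i|=\prod_{i\in I}|A_i/\theta_i|$.

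For Part (\ref{specprod1}), since each factor $|A_i/\theta_i|$ is at least $1$, a product of such cardinals equals $2$ exactly when one factor is $2$ and all the others are $1$; as $|A_j/\theta_j|=1$ iff $\theta_j=\nabla_{A_j}$ and $|A_i/\theta_i|=2$ iff $\theta_i\in{\rm Con}_2(A_i)$, this yields the asserted form of ${\rm Con}_2(A)$. The representation of ${\rm Max}(A)$ in Part (\ref{specprod2}) is purely order--theoretic and needs neither modularity nor the commutator: by the coordinatewise containment the filter $[\theta)$ in ${\rm Con}(A)$ is the product of the filters $[\theta_i)$. Thus if $\theta=\prod_{i\in I}\theta_i$ is a maximal proper congruence and $\theta_k\neq\nabla_{A_k}$, then replacing the $k$--th coordinate by $\nabla_{A_k}$ produces a strictly larger congruence, which must be $\nabla_A$; this forces $\theta_j=\nabla_{A_j}$ for all $j\neq k$, after which $[\theta)\cong[\theta_k)$ shows $\theta_k\in{\rm Max}(A_k)$. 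The converse is immediate, since for $\theta_k\in{\rm Max}(A_k)$ the filter of $\theta_k\times\prod_{j\neq k}\nabla_{A_j}$ is $\{\theta,\nabla_A\}$.

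For the representation of ${\rm Spec}(A)$ in Part (\ref{specprod3}), the commutator hypothesis $[\prod_i\alpha_i,\prod_i\beta_i]_A=\prod_i[\alpha_i,\beta_i]_{A_i}$ enters. For the easy inclusion, take $\theta_k\in{\rm Spec}(A_k)$ and $\theta=\theta_k\times\prod_{j\neq k}\nabla_{A_j}$ (which is proper since $\theta_k\neq\nabla_{A_k}$); given $\alpha=\prod_i\alpha_i$, $\beta=\prod_i\beta_i$ with $[\alpha,\beta]_A\subseteq\theta$, the factoring gives $[\alpha_k,\beta_k]_{A_k}\subseteq\theta_k$, hence $\alpha_k\subseteq\theta_k$ or $\beta_k\subseteq\theta_k$, while every other coordinate is trivially contained in $\theta_j=\nabla_{A_j}$, so $\alpha\subseteq\theta$ or $\beta\subseteq\theta$. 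For the reverse inclusion I would start from a prime $\theta=\prod_i\theta_i$ and first exclude two distinct proper coordinates $\theta_k,\theta_l\neq\nabla$: taking $\alpha$ equal to $\theta$ except with $\nabla_{A_k}$ in coordinate $k$, and $\beta$ equal to $\theta$ except with $\nabla_{A_l}$ in coordinate $l$, the factoring together with the fact that the commutator is included in the intersection (Proposition \ref{1.3}) makes $[\alpha,\beta]_A\subseteq\theta$ hold in every coordinate, while $\alpha\not\subseteq\theta$ and $\beta\not\subseteq\theta$, contradicting primality. Hence exactly one coordinate $\theta_k$ is proper, and a one--coordinate version of the same factoring argument shows $\theta_k\in{\rm Spec}(A_k)$.

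Finally, all the ``iff'' equalities in Parts (\ref{specprod2}) and (\ref{specprod3}) follow uniformly from the three representations together with uniqueness of decomposition: for each $i$ and each proper $\theta_i$, the congruence $\theta_i\times\prod_{j\neq i}\nabla_{A_j}$ belongs to ${\rm Con}_2(A)$, ${\rm Max}(A)$, respectively ${\rm Spec}(A)$, precisely when $\theta_i$ belongs to ${\rm Con}_2(A_i)$, ${\rm Max}(A_i)$, respectively ${\rm Spec}(A_i)$. Reading each global equality (such as ${\rm Max}(A)={\rm Con}_2(A)$ or ${\rm Spec}(A)={\rm Max}(A)$) coordinate by coordinate converts it into the conjunction over $i\in I$ of the corresponding local equality, and conversely. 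I expect the main obstacle to be the reverse inclusion in Part (\ref{specprod3}): the delicate point is to choose the two test congruences $\alpha,\beta$ so that their commutator remains inside $\theta$ in every coordinate --- which is exactly where both the factoring hypothesis and the inclusion of the commutator in the intersection (Proposition \ref{1.3}) are used --- while arranging that neither $\alpha$ nor $\beta$ is contained in $\theta$; everything else is bookkeeping with the coordinatewise order.
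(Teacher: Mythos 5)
Your proposal is correct and takes essentially the same route as the paper: the coordinatewise decomposition and cardinality formula for ${\rm Con}_2(A)$, a direct order argument for ${\rm Max}(A)$, and lifted test congruences combined with the factoring hypothesis for ${\rm Spec}(A)$, with the ``iff'' statements read off from the representations. The only deviations are cosmetic --- you exclude a second proper coordinate before testing primality of the remaining one, whereas the paper does this in the opposite order with slightly different test congruences, and you invoke only the inclusion $[\alpha ,\beta ]_{A}\subseteq \alpha \cap \beta $ of Proposition \ref{1.3} where the paper writes equalities of the form $[\theta _j,\nabla _{A_j}]_{A_j}=\theta _j$; your inclusion is all that the argument needs (and is in fact the safer step under the lemma's stated hypotheses, which do not by themselves guarantee those equalities).
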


\begin{proof} (\ref{specprod1}) By the form of ${\rm Con}(A)$ stated in the enunciation and the fact that, by Remark \ref{prods}, for any $\displaystyle (\theta _j)_{j\in I}\in \{\prod _{j\in I}{\rm Con}(A_j)$, we have: $\displaystyle \prod _{j\in I}\theta _j\in {\rm Con}_2(A)$ iff, for some $i\in I$, $\theta _i\in {\rm Con}_2(A_i)$ and, for all $j\in I\setminus \{i\}$, $\theta _j\in {\rm Con}_1(A_j)=\{\nabla _{A_j}\}$.

\noindent (\ref{specprod2}) Let $i\in I$, $\theta _i\in {\rm Max}(A_i)\subseteq {\rm Con}(A_i)\setminus \{\nabla _{A_i}\}$, and $\displaystyle \theta =\theta _i\times \prod _{j\in I\setminus \{i\}}\nabla _{A_j}\in {\rm Con}(A)\setminus \{\nabla _A\}$. Let $\mu \in {\rm Con}(A)$ such that $\theta \subsetneq \mu $. Then, by the hypothesis on the form of ${\rm Con}(A)$, $\displaystyle \mu =\prod _{j\in I}\mu _j$ for some $\displaystyle (\mu _j)_{j\in I}\in \{\prod _{j\in I}{\rm Con}(A_j)$. Since $\theta \subsetneq \mu $, it follows that $\mu _j=\nabla _{A_j}$ for all $j\in I\setminus \{i\}$, and $\mu _i\supsetneq \theta _i\in {\rm Max}(A_i)$, hence $\mu _i=\nabla _{A_i}$. Thus $\displaystyle \mu =\prod _{j\in I}\nabla _{A_j}=\nabla _A$. Therefore $\theta \in {\rm Max}(A)$.

Now let $\theta \in {\rm Max}(A)$, so that $\displaystyle \theta =\prod _{j\in I}\theta _j$ for some $\displaystyle (\theta _j)_{j\in I}\in \{\prod _{j\in I}{\rm Con}(A_j)$. Then $\theta \neq \nabla _A$, so there exists an $i\in I$ such that $\theta _i\neq \nabla _{A_i}$. Assume by absurdum that $\theta _i\notin {\rm Max}(A_i)$, so that there exists a $\mu _i\in {\rm Con}(A_i)$ with $\theta _i\subsetneq \mu _i\subsetneq \nabla _{A_i}$. Then $\displaystyle \theta =\prod _{j\in I}\theta _j\subsetneq \mu _i\times \prod _{j\in I\setminus \{i\}}\theta _j\subseteq \mu _i\times \prod _{j\in I\setminus \{i\}}\nabla _{A_j}\subsetneq \prod _{j\in I}\nabla _{A_j}=\nabla _A$, which contradicts the fact that $\theta \in {\rm Max}(A)$. So $\theta _i\in {\rm Max}(A_i)$. Now assume by absurdum that there exists a $k\in I\setminus \{i\}$ such that $\theta _k\neq \nabla _{A_k}$. Then $\displaystyle \theta =\prod _{j\in I}\theta _j\subsetneq \nabla _{A_i}\times \prod _{j\in I\setminus \{i\}}\theta _j\subsetneq \prod _{j\in I}\nabla _{A_j}=\nabla _A$, which contradicts the fact that $\theta \in {\rm Max}(A)$. Hence $\displaystyle \theta =\theta _i\times \prod _{j\in I\setminus \{i\}}\nabla _{A_j}$.

By (\ref{specprod1}), we get the second statement.

\noindent (\ref{specprod3}) Let $i\in I$, $\theta _i\in {\rm Spec}(A_i)\subseteq {\rm Con}(A_i)\setminus \{\nabla _{A_i}\}$, and $\displaystyle \theta =\theta _i\times \prod _{j\in I\setminus \{i\}}\nabla _{A_j}\in {\rm Con}(A)\setminus \{\nabla _A\}$. Let $\alpha ,\beta \in {\rm Con}(A)$ such that $[\alpha ,\beta ]_A\subseteq \theta $. Then $\displaystyle \alpha =\prod _{j\in I}\alpha _j$ and $\displaystyle \beta =\prod _{j\in I}\beta _j$ for some $\displaystyle (\alpha _j)_{j\in I},(\beta _j)_{j\in I}\in \prod _{j\in I}{\rm Con}(A_j)$. Then $\displaystyle \theta =\theta _i\times \prod _{j\in I\setminus \{i\}}\nabla _{A_j}\supseteq [\alpha ,\beta ]_A=[\prod _{j\in I}\alpha _j,\prod _{j\in I}\beta _j]_{A_j}=\prod _{j\in I}[\alpha _j,\beta _j]_{A_j}$, thus $[\alpha _i,\beta _i]_{A_i}\subseteq \theta _i\in {\rm Spec}(A_i)$, hence $\alpha _i\subseteq \theta _i$ or $\beta _i\subseteq \theta _i$. Hence $\displaystyle \alpha =\prod _{j\in I}\alpha _j\subseteq =\theta _i\times \prod _{j\in I\setminus \{i\}}\nabla _{A_j}=\theta $ and $\displaystyle \beta =\prod _{j\in I}\beta _j\subseteq =\theta _i\times \prod _{j\in I\setminus \{i\}}\nabla _{A_j}=\theta $, therefore $\theta \in {\rm Spec}(A)$.

Now let $\theta \in {\rm Spec}(A)\subseteq {\rm Con}(A)\setminus \{\nabla _A\}$, so that $\displaystyle \theta =\prod _{j\in I}\theta _j$ for some $\displaystyle (\theta _j)_{j\in I}\in \{\prod _{j\in I}{\rm Con}(A_j)$ and there exists an $i\in I$ such that $\theta _i\neq \nabla _{A_i}$. Assume by absurdum that $\theta _i\notin {\rm Spec}(A_i)$, so that there exist $\alpha _i,\beta _i\in {\rm Con}(A_i)$ such that $[\alpha _i,\beta _i]_{A_i}\subseteq \theta _i$, but $\alpha _i\nsubseteq \theta _i$ and $\beta _i\nsubseteq \theta _i$. Let $\displaystyle \alpha =\alpha _i\times \prod _{j\in I\setminus \{i\}}\theta _j\in {\rm Con}(A)$ and $\displaystyle \beta =\beta _i\times \prod _{j\in I\setminus \{i\}}\nabla _{A_j}\in {\rm Con}(A)$. Then $\alpha \nsubseteq \theta $ and $\beta \nsubseteq \theta $, but $\displaystyle [\alpha ,\beta ]_A=[\alpha _i,\beta _i]_{A_i}\times \prod _{j\in I\setminus \{i\}}[\theta _j,\nabla _{A_j}]_{A_j}=[\alpha _i,\beta _i]_{A_i}\times \prod _{j\in I\setminus \{i\}}\theta _j\subseteq \prod _{j\in I}\theta _j=\theta $, which contradicts the fact that $\theta \in {\rm Spec}(A)$. Hence $\theta _i\in {\rm Spec}(A_i)$. Now assume by absurdum that there exists a $k\in I\setminus \{i\}$ such that $\theta _k\neq \nabla _{A_k}$. Let $\displaystyle \alpha =\nabla _{A_i}\times \prod _{j\in I\setminus \{i\}}\theta _j\in {\rm Con}(A)$ and $\displaystyle \beta =\theta _i\times \prod _{j\in I\setminus \{i\}}\nabla _{A_j}\in {\rm Con}(A)$. Since $\nabla _{A_i}\nsubseteq \theta _i$, we have $\alpha \nsubseteq \theta $; since $\nabla _{A_k}\nsubseteq \theta _k$, we have $\beta \nsubseteq \theta $. But $\displaystyle [\alpha ,\beta ]_A=[\nabla _{A_i},\theta _i]_{A_i}\times \prod _{j\in I\setminus \{i\}}[\theta _j,\nabla _{A_j}]_{A_j}=\prod _{j\in I}\theta _j=\theta \subseteq \theta $, which contradicts the fact that $\theta \in {\rm Spec}(A)$. Hence $\displaystyle \theta =\theta _i\times \prod _{j\in I\setminus \{i\}}\nabla _{A_j}$.

By (\ref{specprod1}), we get the second statement. By (\ref{specprod2}), we get the third statement.\end{proof}

\begin{proposition} If ${\cal C}$ is congruence--modular and fulfills the equivalent conditions from Proposition \ref{prodcongr}, then, for any $n\in \N ^*$ and any algebras $A_1,\ldots ,A_n$ from ${\cal C}$, if $\displaystyle A=\prod _{i=1}^nA_i$, then: \begin{enumerate}
\item\label{specprodfin1} $\displaystyle {\rm Con}_2(A)=\bigcup _{i=1}^n\{\theta _i\times \prod _{j\in \overline{1,n}\setminus \{i\}}\nabla _{A_j}\ |\ \theta _i\in {\rm Con}_2(A_i)\}$;
\item\label{specprodfin2} $\displaystyle {\rm Max}(A)=\bigcup _{i=1}^n\{\theta _i\times \prod _{j\in \overline{1,n}\setminus \{i\}}\nabla _{A_j}\ |\ \theta _i\in {\rm Max}(A_i)\}$; ${\rm Max}(A)={\rm Con}_2(A)$ iff ${\rm Max}(A_i)={\rm Con}_2(A_i)$ for all $i\in \overline{1,n}$;
\item\label{specprodfin3} $\displaystyle {\rm Spec}(A)=\bigcup _{i=1}^n\{\theta _i\times \prod _{j\in \overline{1,n}\setminus \{i\}}\nabla _{A_j}\ |\ \theta _i\in {\rm Spec}(A_i)\}$; ${\rm Spec}(A)={\rm Con}_2(A)$ iff ${\rm Spec}(A_i)={\rm Con}_2(A_i)$ for all $i\in \overline{1,n}$; ${\rm Spec}(A)={\rm Max}(A)$ iff ${\rm Spec}(A_i)={\rm Max}(A_i)$ for all $i\in \overline{1,n}$.\end{enumerate}\label{specprodfin}\end{proposition}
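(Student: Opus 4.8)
The plan is to obtain this proposition as an immediate specialization of Lemma \ref{specprod} to the finite index set $I=\overline{1,n}$, so that the only genuine work is to verify that the two hypotheses of that lemma are met under the present assumptions. Once this is done, each of the three parts transcribes directly from the corresponding part of Lemma \ref{specprod}.

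First I would check the structural hypothesis of Lemma \ref{specprod}, namely that ${\rm Con}(A)=\{\prod_{i=1}^n\theta _i\ |\ (\forall \, i\in \overline{1,n})\, (\theta _i\in {\rm Con}(A_i))\}$. This is immediate: by assumption ${\cal C}$ fulfills the equivalent conditions of Proposition \ref{prodcongr}, the third of which is precisely this factorization of the congruence lattice of a finite direct product. With this established, parts (\ref{specprodfin1}) and (\ref{specprodfin2}) follow word for word from parts (\ref{specprod1}) and (\ref{specprod2}) of Lemma \ref{specprod}, the equivalence ``${\rm Max}(A)={\rm Con}_2(A)$ iff ${\rm Max}(A_i)={\rm Con}_2(A_i)$ for all $i$'' being inherited as well.

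For part (\ref{specprodfin3}) I would additionally supply the commutator hypothesis demanded by part (\ref{specprod3}) of Lemma \ref{specprod}, namely that $[\prod_{i=1}^n\alpha _i,\prod_{i=1}^n\beta _i]_A=\prod_{i=1}^n[\alpha _i,\beta _i]_{A_i}$ for all choices $\alpha _i,\beta _i\in {\rm Con}(A_i)$. Since ${\cal C}$ is congruence--modular, this is exactly the content of Proposition \ref{comutprod}. Feeding this into part (\ref{specprod3}) of Lemma \ref{specprod} yields the description of ${\rm Spec}(A)$ together with the two equivalences ``${\rm Spec}(A)={\rm Con}_2(A)$ iff ${\rm Spec}(A_i)={\rm Con}_2(A_i)$ for all $i$'' and ``${\rm Spec}(A)={\rm Max}(A)$ iff ${\rm Spec}(A_i)={\rm Max}(A_i)$ for all $i$''.

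There is essentially no deep obstacle here; the entire content of the proposition is the matching of hypotheses. The one point deserving a little care is that part (\ref{specprod3}) of Lemma \ref{specprod} carries the commutator--factorization as a standing assumption, which here is not free but must be justified --- and the justification is simply that congruence--modularity already guarantees finite commutator--factorization through Proposition \ref{comutprod}. Once this is noted, the passage to $I=\overline{1,n}$ is purely notational.
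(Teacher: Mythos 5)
Your proposal matches the paper's own proof exactly: the paper proves Proposition \ref{specprodfin} precisely by invoking Lemma \ref{specprod} together with Proposition \ref{comutprod}, with the structural hypothesis on ${\rm Con}(A)$ supplied by the third equivalent condition of Proposition \ref{prodcongr}. Your added remark that the commutator--factorization hypothesis of Lemma \ref{specprod}, (\ref{specprod3}), is not free but is guaranteed by congruence--modularity via Proposition \ref{comutprod} is exactly the matching of hypotheses the paper relies on, so there is nothing to correct.
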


\begin{proof} By Lemma \ref{specprod} and Proposition \ref{comutprod}.\end{proof}

\begin{corollary} If ${\cal C}$ is congruence--modular and fulfills the equivalent conditions from Proposition \ref{prodcongr}, then, for any $n\in \N ^*$, if $A_i$ and $B_i$ are algebras from ${\cal C}$ and $f_i:A_i\rightarrow B_i$ is a morphism in ${\cal C}$ for every $i\in \overline{1,n}$, then:\begin{enumerate}
\item\label{corprod1} $\displaystyle \prod _{i=1}^nf_i$ is Max--admissible iff $f_1,\ldots ,f_n$ are Max--admissible;
\item\label{corprod2} $\displaystyle \prod _{i=1}^nf_i$ is admissible iff $f_1,\ldots ,f_n$ are admissible.\end{enumerate}\label{corprod}\end{corollary}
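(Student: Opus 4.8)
The plan is to reduce both equivalences to the componentwise behaviour of the inverse-image operator and then read off the conclusion from the structural descriptions of $\mathrm{Max}$ and $\mathrm{Spec}$ of a product supplied by Proposition~\ref{specprodfin}. Write $\displaystyle A=\prod_{i=1}^nA_i$, $\displaystyle B=\prod_{i=1}^nB_i$ and $\displaystyle f=\prod_{i=1}^nf_i$; this $f$ is a morphism in ${\cal C}$ as noted in Section~\ref{preliminaries}, and since ${\cal C}$ fulfils the conditions of Proposition~\ref{prodcongr} both $A$ and $B$ have all their congruences of product form, so Proposition~\ref{specprodfin} applies to each of them. The two facts I would isolate first are: (a) for binary relations $S_i\subseteq B_i^2$ one has $\displaystyle f^*(\prod_{i=1}^nS_i)=\prod_{i=1}^nf_i^*(S_i)$, established in Section~\ref{preliminaries}; and (b) $f_k^*(\nabla_{B_k})=\nabla_{A_k}$ for each $k$, because $(f_k(a),f_k(b))\in B_k^2=\nabla_{B_k}$ for every $(a,b)\in A_k^2$. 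Combining (a) and (b), a congruence of the special shape $\psi=\theta_k\times\prod_{j\neq k}\nabla_{B_j}$ satisfies the clean identity
\[ f^*(\psi )=f_k^*(\theta _k)\times \prod _{j\neq k}\nabla _{A_j}. \]

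For the ``$\Leftarrow$'' direction of (\ref{corprod1}) I would take an arbitrary $\psi\in\mathrm{Max}(B)$; by Proposition~\ref{specprodfin}(\ref{specprodfin2}) it equals $\theta_i\times\prod_{j\neq i}\nabla_{B_j}$ for some $i$ and some $\theta_i\in\mathrm{Max}(B_i)$, so the displayed identity gives $f^*(\psi)=f_i^*(\theta_i)\times\prod_{j\neq i}\nabla_{A_j}$; Max--admissibility of $f_i$ puts $f_i^*(\theta_i)\in\mathrm{Max}(A_i)$, whence the same Proposition, now applied to $A$, certifies $f^*(\psi)\in\mathrm{Max}(A)$. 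For ``$\Rightarrow$'' I fix $k$ and $\theta_k\in\mathrm{Max}(B_k)$ and feed through $f^*$ the isolating congruence $\psi=\theta_k\times\prod_{j\neq k}\nabla_{B_j}$, which lies in $\mathrm{Max}(B)$; Max--admissibility of $f$ yields $f^*(\psi)=f_k^*(\theta_k)\times\prod_{j\neq k}\nabla_{A_j}\in\mathrm{Max}(A)$. Since a maximal congruence is proper, the $k$-th factor cannot be $\nabla_{A_k}$, and since every factor at $j\neq k$ already equals $\nabla_{A_j}$, the form of $\mathrm{Max}(A)$ in Proposition~\ref{specprodfin}(\ref{specprodfin2}) forces the unique non--trivial maximal factor to sit in position $k$, i.e.\ $f_k^*(\theta_k)\in\mathrm{Max}(A_k)$; as $\theta_k$ was arbitrary, $f_k$ is Max--admissible.

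Part (\ref{corprod2}) is then verbatim the same argument with $\mathrm{Spec}$ in place of $\mathrm{Max}$, using Proposition~\ref{specprodfin}(\ref{specprodfin3}) to describe $\mathrm{Spec}(A)$ and $\mathrm{Spec}(B)$ as the congruences having exactly one prime factor and all remaining factors equal to $\nabla$. I expect no genuine obstacle; the only points needing care are both in the ``$\Rightarrow$'' directions: one must check that the isolating congruence $\theta_k\times\prod_{j\neq k}\nabla_{B_j}$ is indeed maximal (respectively prime), so that admissibility of $f$ can be invoked on it, and that reading the $k$-th component off $f^*(\psi)$ is legitimate. Both are immediate from the explicit product descriptions in Proposition~\ref{specprodfin}, together with the observation that a product of congruences (each containing its diagonal, hence non--empty) determines its factors uniquely, so that comparing components is valid.
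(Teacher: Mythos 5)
Your proof is correct and follows exactly the route the paper intends: the paper states Corollary \ref{corprod} with no proof at all, treating it as an immediate consequence of Proposition \ref{specprodfin} together with the componentwise identity $f^*(\prod_{i=1}^nS_i)=\prod_{i=1}^nf_i^*(S_i)$ recorded in Section \ref{preliminaries}, which is precisely the argument you spell out for both the Max--admissible and the admissible case. Your explicit check that a product of reflexive relations determines its factors uniquely --- needed to read off the $k$--th component in the ``only if'' directions --- is a detail the paper leaves tacit, and you handle it correctly.
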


\begin{remark} Statement (\ref{corprod1}) in Corollary \ref{corprod} also holds for an arbitrary non--empty index set $I$ instead of $\overline{1,n}$, if both $\displaystyle \prod _{i\in I}A_i$ and $\displaystyle \prod _{i\in I}B_i$ have the congruences as in the enunciation of Lemma \ref{specprod}; if, furthermore, both $\displaystyle \prod _{i\in I}A_i$ and $\displaystyle \prod _{i\in I}B_i$  fulfill the condition on the commutator from Lemma \ref{specprod}, (\ref{specprod3}), then statement (\ref{corprod2}) in Corollary \ref{corprod} holds, as well.\end{remark}

\begin{corollary} If ${\cal C}$ is congruence--modular and semi--degenerate or congruence--distributive, then ${\cal C}$ fulfills the properties from Proposition \ref{specprodfin} and Corollary \ref{corprod}.\label{distribsemid}\end{corollary}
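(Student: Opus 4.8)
The plan is to observe that this is a pure assembly of earlier facts: both Proposition \ref{specprodfin} and Corollary \ref{corprod} carry exactly the same two hypotheses, namely that $\mathcal{C}$ is congruence--modular and that $\mathcal{C}$ fulfills the equivalent conditions from Proposition \ref{prodcongr}. So the entire task reduces to checking that each of the two disjunctive assumptions in the statement implies these two hypotheses; once that is done, the conclusions of Proposition \ref{specprodfin} and Corollary \ref{corprod} transfer verbatim.

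First I would treat the congruence--distributive case. Here congruence--modularity is automatic, since every distributive lattice is modular, so each ${\rm Con}(M)$ being distributive forces it to be modular; hence $\mathcal{C}$ is congruence--modular. For the second hypothesis I would simply invoke Remark \ref{distribprod} (which in turn rests on Theorem \ref{distrib}), giving that a congruence--distributive $\mathcal{C}$ fulfills the equivalent conditions from Proposition \ref{prodcongr}. Thus both hypotheses hold in this case.

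Next I would treat the congruence--modular and semi--degenerate case. Congruence--modularity is part of the assumption, so it remains only to secure the second hypothesis, and this is precisely the content of Lemma \ref{semidprod}: a congruence--modular and semi--degenerate $\mathcal{C}$ fulfills the equivalent conditions from Proposition \ref{prodcongr}. Hence both hypotheses again hold.

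Since in either case $\mathcal{C}$ satisfies the common hypotheses of Proposition \ref{specprodfin} and Corollary \ref{corprod}, their conclusions apply to $\mathcal{C}$, which is exactly what the corollary asserts. I do not expect any genuine obstacle here; the only point requiring a word of justification is the implicit implication that congruence--distributivity yields congruence--modularity, and the rest is a direct citation of Remark \ref{distribprod} and Lemma \ref{semidprod}.
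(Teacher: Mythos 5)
Your proposal is correct and follows exactly the paper's own proof, which simply cites Remark \ref{distribprod} and Lemma \ref{semidprod} to secure the common hypotheses of Proposition \ref{specprodfin} and Corollary \ref{corprod}. The only addition you make is to spell out the (implicit) observation that congruence--distributivity entails congruence--modularity, which is a harmless and reasonable bit of extra care.
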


\begin{proof} By Remark \ref{distribprod} and Lemma \ref{semidprod}.\end{proof}

For any lattices $L$ and $M$ such that $L$ has a $1$ and $M$ has a $0$, we shall denote by $L\oplus M$ the ordinal sum of $L$ with $M$ and, for any $\alpha \in {\rm Con}(L)$ and any $\beta \in {\rm Con}(M)$, by $\alpha \oplus \beta =eq((L/\alpha \setminus \{c/\alpha \})\cup (M/\beta \setminus \{c/\beta \})\cup \{c/\alpha\cup c/\beta \})$, where $c$ is the common element of $L$ and $M$ in $L\oplus M$.

\begin{lemma}{\rm \cite{fclp},\cite{eudacs15}} For any lattices $L$ and $M$ such that $L$ has a $1$ and $M$ has a $0$, ${\rm Con}(L\oplus M)=\{\alpha \oplus \beta \ |\ \alpha \in {\rm Con}(L),\beta \in {\rm Con}(M)\}\cong {\rm Con}(L)\times {\rm Con}(M)$.\label{ordsumdacs}\end{lemma}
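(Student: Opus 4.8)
The plan is to establish the claimed set equality and the lattice isomorphism by constructing the map $(\alpha ,\beta )\mapsto \alpha \oplus \beta $ explicitly and verifying it is a well-defined bijection which preserves the order. First I would fix $L$ with top element $1_L$ and $M$ with bottom element $0_M$, and recall that in the ordinal sum $L\oplus M$ these two elements are identified into the single common element $c$; every element of $L$ lies below $c$ and every element of $M$ lies above $c$. The key structural fact I will exploit is that, by Remark \ref{convex}, each congruence class of a lattice is a convex sublattice. Consequently, for any $\gamma \in {\rm Con}(L\oplus M)$, the class $c/\gamma $ is convex, and every class other than $c/\gamma $ lies entirely inside $L$ or entirely inside $M$ (a class meeting both $L\setminus\{c\}$ and $M\setminus\{c\}$ would, by convexity, have to contain $c$).

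\textbf{Restriction and reconstruction.}
Next I would show the map is well-defined and surjective by restriction. Given $\gamma \in {\rm Con}(L\oplus M)$, set $\alpha =\gamma \cap L^2$ and $\beta =\gamma \cap M^2$; by the second sentence of Remark \ref{convex} these are congruences of $L$ and of $M$ respectively. The convexity observation above guarantees that the only class of $\gamma $ straddling the junction is $c/\gamma $, and that $c/\gamma =(c/\alpha )\cup(c/\beta )$ with $c/\alpha =c/\gamma \cap L$ a filter-type top class of $L$ and $c/\beta =c/\gamma \cap M$ a bottom class of $M$; hence $\gamma =\alpha \oplus\beta $, matching exactly the definition of $\oplus $ given just before the lemma. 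Conversely, for arbitrary $\alpha \in {\rm Con}(L)$ and $\beta \in {\rm Con}(M)$ I would check that the partition defining $\alpha \oplus\beta $ is indeed a lattice congruence of $L\oplus M$: compatibility with $\wedge $ and $\vee $ reduces to three cases according to whether the two arguments lie in $L$, in $M$, or on opposite sides of $c$, and in the cross case the meet lands in $L$ while the join lands in $M$, so compatibility follows from that of $\alpha $ and $\beta $ separately together with the fact that the merged class $c/\alpha\cup c/\beta $ absorbs the boundary. This gives that $(\alpha ,\beta )\mapsto \alpha \oplus\beta $ and $\gamma \mapsto (\gamma\cap L^2,\gamma\cap M^2)$ are mutually inverse, proving the set equality and the bijectivity.

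\textbf{Order-preservation and the isomorphism.}
Finally I would promote the bijection to a lattice isomorphism onto ${\rm Con}(L)\times {\rm Con}(M)$ by verifying it is an order isomorphism, which suffices since both sides are lattices. In one direction, $\alpha_1\subseteq\alpha_2$ and $\beta_1\subseteq\beta_2$ clearly force $\alpha_1\oplus\beta_1\subseteq\alpha_2\oplus\beta_2$, because each merges fewer pairs on each side and the merged boundary class only grows. In the other direction, if $\alpha_1\oplus\beta_1\subseteq\alpha_2\oplus\beta_2$, then intersecting with $L^2$ and with $M^2$ recovers $\alpha_1\subseteq\alpha_2$ and $\beta_1\subseteq\beta_2$, using that intersection with $L^2$ and $M^2$ inverts $\oplus $. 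Since an order isomorphism between lattices automatically preserves $\vee $ and $\wedge $, the map is a lattice isomorphism onto the product order ${\rm Con}(L)\times {\rm Con}(M)$.

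\textbf{Expected obstacle.}
The step I expect to require the most care is the verification that $\alpha\oplus\beta $ is genuinely a congruence, specifically the compatibility check in the cross case where one argument sits strictly below $c$ (in $L$) and the other strictly above $c$ (in $M$): one must confirm that applying $\wedge $ keeps the pair on the $L$-side and $\vee $ keeps it on the $M$-side, so that no unwanted identification leaks across the junction beyond the single merged class $c/\alpha\cup c/\beta $. Since the lemma is cited from \cite{fclp} and \cite{eudacs15}, I would lean on those references for the routine compatibility bookkeeping and present the convexity argument as the conceptual heart of the proof.
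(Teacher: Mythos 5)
Your proof is correct. The paper offers no argument of its own for this lemma --- it is quoted from \cite{fclp} and \cite{eudacs15} --- so there is no internal proof to compare against; your restriction/reconstruction argument via convexity of congruence classes (Remark \ref{convex}) plus the order-isomorphism upgrade is the standard proof, and the one delicate point you flagged, compatibility of $\alpha \oplus \beta $ in the cross case, goes through exactly as you sketch: for $x\in c/\alpha $, $y\in c/\beta $ and $z\in L$ one gets $(x\wedge z,y\wedge z)=(x\wedge z,z)\in \alpha $ and $x\vee z\in c/\alpha $, $y\vee z=y\in c/\beta $, with the symmetric computation for $z\in M$.
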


\begin{lemma} For any lattices $L$ and $M$ such that $L$ has a $1$ and $M$ has a $0$, any $\alpha \in {\rm Con}(L)$ and any $\beta \in {\rm Con}(M)$, $|(L\oplus M)/(\alpha \oplus \beta )|=|L/\alpha |+|M/\beta |-1$.\label{cardsumcongr}\end{lemma}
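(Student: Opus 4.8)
The plan is to read off the partition of $L\oplus M$ determined by $\alpha\oplus\beta$ straight from its definition and then simply count its blocks. I would write $c$ for the common element of $L$ and $M$ in $L\oplus M$ (so $c=1$ in $L$ and $c=0$ in $M$) and recall that, as sets, $L\cap M=\{c\}$ inside $L\oplus M$. By the defining formula for $\alpha\oplus\beta$, the quotient $(L\oplus M)/(\alpha\oplus\beta)$ consists exactly of the blocks in three families: the classes $D\in L/\alpha$ with $D\neq c/\alpha$, the classes $E\in M/\beta$ with $E\neq c/\beta$, and the single merged block $c/\alpha\cup c/\beta$.

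The one step that needs care --- and which I would treat as the main (though modest) obstacle --- is checking that all these blocks are pairwise disjoint and nonempty, so that the three families contain no repetitions and nothing is double-counted. Here I would argue purely set-theoretically: each $D\in L/\alpha$ with $D\neq c/\alpha$ is disjoint from $c/\alpha$ and lies in $L\setminus c/\alpha\subseteq L\setminus\{c\}$, hence is disjoint from $M$; symmetrically each $E\in M/\beta$ with $E\neq c/\beta$ lies in $M\setminus\{c\}$ and is disjoint from $L$. Thus the first two families are mutually disjoint, and $c/\alpha\cup c/\beta$ is the only block containing $c$, so it is distinct from every block in the first two families.

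Once disjointness is settled, the count is immediate and requires no computation: deleting the single class $c/\alpha$ from $L/\alpha$ leaves $|L/\alpha|-1$ blocks, deleting $c/\beta$ from $M/\beta$ leaves $|M/\beta|-1$ blocks, and the merged block adds one more, so
\[
|(L\oplus M)/(\alpha\oplus\beta)|=(|L/\alpha|-1)+(|M/\beta|-1)+1=|L/\alpha|+|M/\beta|-1,
\]
as claimed. I would close with a one-line remark that the same bookkeeping covers the case of infinite quotients, since removing a single element from an infinite set preserves its cardinality and finite shifts are absorbed, so the displayed identity remains valid as an equality of cardinals.
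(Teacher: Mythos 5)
Your proof is correct and is precisely the argument the paper compresses into its one-line proof (``Clear, from the definition of $\alpha\oplus\beta$''): you read off the partition $(L/\alpha\setminus\{c/\alpha\})\cup(M/\beta\setminus\{c/\beta\})\cup\{c/\alpha\cup c/\beta\}$ from the definition, verify its blocks are pairwise distinct using $L\cap M=\{c\}$, and count. Your closing remark on infinite cardinals is a sensible addition that the paper leaves implicit.
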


\begin{proof} Clear, from the definition of $\alpha \oplus \beta $.\end{proof}

\begin{proposition} Let $n\in \N ^*$, $L_1,\ldots ,L_n$ be bounded lattices and $\displaystyle L=\bigoplus _{i=1}^nL_i$. Then:\begin{enumerate}
\item\label{ordsum1} $\displaystyle {\rm Con}(L)=\{\bigoplus _{i=1}^n\theta _i\ |\ (\forall \, i\in \overline{1,n})\, (\theta _i\in {\rm Con}(L_i))\}\cong \prod _{i=1}^n{\rm Con}(L_i)$;
\item\label{ordsum0} for all $\theta _1\in {\rm Con}(L_1),\ldots ,\theta _n\in {\rm Con}(L_n)$, if $\displaystyle \theta =\bigoplus _{i=1}^n\theta _i$, then $\displaystyle |L/\theta |=(\sum _{i=1}^n|L_i/\theta _i|)-n+1$;
\item\label{ordsum2} $\displaystyle {\rm Con}_2(L)=\bigcup _{i=1}^n\{\nabla _{L_1}\oplus \nabla _{L_2}\oplus \ldots \oplus \nabla _{L_{i-1}}\oplus \theta _i\oplus \nabla _{L_{i+1}}\oplus \ldots \oplus \nabla _{L_n}\ |\ \theta _i\in {\rm Con}_2(L_i)\}$;
\item\label{ordsum3} $\displaystyle {\rm Max}(L)=\bigcup _{i=1}^n\{\nabla _{L_1}\oplus \nabla _{L_2}\oplus \ldots \oplus \nabla _{L_{i-1}}\oplus \theta _i\oplus \nabla _{L_{i+1}}\oplus \ldots \oplus \nabla _{L_n}\ |\ \theta _i\in {\rm Max}(L_i)\}$; ${\rm Max}(L)={\rm Con}_2(L)$ iff ${\rm Max}(L_i)={\rm Con}_2(L_i)$ for all $i\in \overline{1,n}$;
\item\label{ordsum4} $\displaystyle {\rm Spec}(L)=\bigcup _{i=1}^n\{\nabla _{L_1}\oplus \nabla _{L_2}\oplus \ldots \oplus \nabla _{L_{i-1}}\oplus \theta _i\oplus \nabla _{L_{i+1}}\oplus \ldots \oplus \nabla _{L_n}\ |\ \theta _i\in {\rm Spec}(L_i)\}$; ${\rm Spec}(L)={\rm Con}_2(L)$ iff ${\rm Spec}(L_i)={\rm Con}_2(L_i)$ for all $i\in \overline{1,n}$;  ${\rm Spec}(L)={\rm Max}(L)$ iff ${\rm Spec}(L_i)={\rm Max}(L_i)$ for all $i\in \overline{1,n}$.\end{enumerate}\label{ordsum}\end{proposition}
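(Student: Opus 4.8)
The plan is to establish the five parts in the stated order, using (\ref{ordsum1}) as the backbone and feeding it into the rest.

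\noindent \emph{Parts (\ref{ordsum1}) and (\ref{ordsum0}).} I would prove both by induction on $n$, reducing the $n$--ary ordinal sum to a binary one: writing $\displaystyle L=(\bigoplus _{i=1}^{n-1}L_i)\oplus L_n$, each $L_i$ is bounded, so $\displaystyle \bigoplus _{i=1}^{n-1}L_i$ has a $1$ and $L_n$ has a $0$, hence Lemma \ref{ordsumdacs} applies at each step. The one thing to check is that the binary operation $\oplus $ on congruences composes to the $n$--ary one, i.e. $\displaystyle (\bigoplus _{i=1}^{n-1}\theta _i)\oplus \theta _n=\bigoplus _{i=1}^n\theta _i$, which is immediate from the definition of $\alpha \oplus \beta $ as the equivalence gluing the top class of the first summand to the bottom class of the second. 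This yields $\displaystyle {\rm Con}(L)=\{\bigoplus _{i=1}^n\theta _i\ |\ \theta _i\in {\rm Con}(L_i)\}$ together with the bounded lattice isomorphism $\displaystyle \Phi :{\rm Con}(L)\to \prod _{i=1}^n{\rm Con}(L_i)$, $\displaystyle \bigoplus _{i=1}^n\theta _i\mapsto (\theta _i)_{i=1}^n$. For (\ref{ordsum0}) I would apply Lemma \ref{cardsumcongr} inductively, $\displaystyle |L/\theta |=|(\bigoplus _{i=1}^{n-1}L_i)/(\bigoplus _{i=1}^{n-1}\theta _i)|+|L_n/\theta _n|-1$, so that the inductive hypothesis telescopes to $\displaystyle (\sum _{i=1}^n|L_i/\theta _i|)-n+1$.

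\noindent \emph{Part (\ref{ordsum2}).} This is the one part not determined by the congruence lattice alone, since two--class--ness refers to quotient cardinalities, so I would read it off from (\ref{ordsum0}). For $\displaystyle \theta =\bigoplus _{i=1}^n\theta _i$ each summand of $\displaystyle \sum _{i=1}^n|L_i/\theta _i|$ is $\geq 1$, with equality exactly when $\theta _i=\nabla _{L_i}$; hence $|L/\theta |=2$ iff $\displaystyle \sum _{i=1}^n|L_i/\theta _i|=n+1$ iff exactly one summand equals $2$ and all others equal $1$, that is, exactly one $\theta _{i_0}\in {\rm Con}_2(L_{i_0})$ while $\theta _j=\nabla _{L_j}$ for $j\neq i_0$ (the cardinals involved being forced finite). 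This is precisely the description in (\ref{ordsum2}).

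\noindent \emph{Parts (\ref{ordsum3}) and (\ref{ordsum4}).} Bounded lattices form a congruence--distributive, hence congruence--modular, variety satisfying the hypotheses of Proposition \ref{prodcongr} (Remark \ref{distribprod}), so Proposition \ref{specprodfin} computes $\displaystyle {\rm Max}(\prod _{i=1}^nL_i)$ and $\displaystyle {\rm Spec}(\prod _{i=1}^nL_i)$. By (\ref{ordsum1}) and Proposition \ref{specprodfin}, (\ref{specprodfin1}), both $\displaystyle {\rm Con}(\bigoplus _{i=1}^nL_i)$ and $\displaystyle {\rm Con}(\prod _{i=1}^nL_i)$ are isomorphic, as bounded lattices, to $\displaystyle \prod _{i=1}^n{\rm Con}(L_i)$ via the maps sending $\displaystyle \bigoplus _{i=1}^n\theta _i$, respectively $\displaystyle \prod _{i=1}^n\theta _i$, to $(\theta _i)_{i=1}^n$; composing them gives a bounded lattice isomorphism $\Psi $ with $\displaystyle \Psi (\bigoplus _{i=1}^n\theta _i)=\prod _{i=1}^n\theta _i$. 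Since, for lattices, the maximal congruences are exactly the coatoms of the congruence lattice and the prime congruences are exactly the prime elements of the congruence lattice (Remark \ref{primdistrib}), both ${\rm Max}$ and ${\rm Spec}$ are preserved by any bounded lattice isomorphism between congruence lattices; thus $\Psi $ carries $\displaystyle {\rm Max}(\bigoplus _{i=1}^nL_i)$ and $\displaystyle {\rm Spec}(\bigoplus _{i=1}^nL_i)$ onto $\displaystyle {\rm Max}(\prod _{i=1}^nL_i)$ and $\displaystyle {\rm Spec}(\prod _{i=1}^nL_i)$. Substituting the descriptions from Proposition \ref{specprodfin}, (\ref{specprodfin2}) and (\ref{specprodfin3}), and pulling each $\displaystyle \prod _{i=1}^n\theta _i$ back to $\displaystyle \bigoplus _{i=1}^n\theta _i$ via $\Psi ^{-1}$, produces the displayed forms in (\ref{ordsum3}) and (\ref{ordsum4}). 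The three ``iff'' equalities then follow from the explicit descriptions: in each of ${\rm Con}_2(L)$, ${\rm Max}(L)$, ${\rm Spec}(L)$ every element has a unique non--$\nabla $ slot, which determines its index $i$, so equality of the indexed unions holds iff the corresponding factorwise equality (${\rm Max}(L_i)={\rm Con}_2(L_i)$, ${\rm Spec}(L_i)={\rm Con}_2(L_i)$, or ${\rm Spec}(L_i)={\rm Max}(L_i)$) holds for every $i$.

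The step needing the most care is the contrast between (\ref{ordsum2}) and (\ref{ordsum3})--(\ref{ordsum4}): although $\displaystyle {\rm Con}(\bigoplus _{i=1}^nL_i)$ and $\displaystyle {\rm Con}(\prod _{i=1}^nL_i)$ are isomorphic bounded lattices, $\Psi $ does \emph{not} preserve quotient cardinalities (the ordinal--sum quotient has cardinality $\displaystyle (\sum _{i=1}^n|L_i/\theta _i|)-n+1$, the direct--product quotient $\displaystyle \prod _{i=1}^n|L_i/\theta _i|$), so ${\rm Con}_2$ must be obtained from the cardinality formula (\ref{ordsum0}) and cannot simply be transported, whereas ${\rm Max}$ and ${\rm Spec}$, being order invariants of the congruence lattice, do transport. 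Properness in the prime characterization is automatic, since every congruence in the target description has exactly one non--$\nabla $ coordinate and is therefore distinct from $\nabla _L$.
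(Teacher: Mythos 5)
Your proof is correct, and for parts (\ref{ordsum1}), (\ref{ordsum0}) and (\ref{ordsum2}) it matches the paper's route: the paper gets (\ref{ordsum1}) from Lemma \ref{ordsumdacs} and (\ref{ordsum0}) from Lemma \ref{cardsumcongr} (leaving the induction on $n$ implicit, which you usefully spell out, including the associativity check $(\bigoplus _{i=1}^{n-1}\theta _i)\oplus \theta _n=\bigoplus _{i=1}^n\theta _i$), and it derives (\ref{ordsum2}) exactly as you do, from (\ref{ordsum1}) and (\ref{ordsum0}). Where you genuinely diverge is in (\ref{ordsum3}) and (\ref{ordsum4}): the paper runs, inside ${\rm Con}(L)$ itself, ``a straightforward proof similar to the one for Lemma \ref{specprod}'', i.e.\ it verifies maximality and primality by hand for the congruences $\nabla _{L_1}\oplus \ldots \oplus \theta _i\oplus \ldots \oplus \nabla _{L_n}$, whereas you transport ${\rm Max}$ and ${\rm Spec}$ through the bounded lattice isomorphism $\Psi $ between ${\rm Con}(\bigoplus _{i=1}^nL_i)$ and ${\rm Con}(\prod _{i=1}^nL_i)$ and then quote Proposition \ref{specprodfin}. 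Your transport step is legitimate precisely because lattices are congruence--distributive: by Remark \ref{primdistrib} the prime congruences are the prime elements of the congruence lattice, and the maximal congruences are its coatoms, so both sets are order invariants preserved by any bounded lattice isomorphism; note that in a merely congruence--modular setting, where ${\rm Spec}$ depends on the commutator and not just on the order of ${\rm Con}(\cdot )$, this transfer would break down, while the paper's direct computation follows the commutator--based template of Lemma \ref{specprod} and is in that sense structured to generalize. What your route buys is economy (no duplicated casework) and a clean conceptual account of the asymmetry you rightly single out: ${\rm Con}_2$ is not an order invariant of the congruence lattice --- the quotient cardinalities $(\sum _{i=1}^n|L_i/\theta _i|)-n+1$ and $\prod _{i=1}^n|L_i/\theta _i|$ differ --- so it must come from (\ref{ordsum0}), while ${\rm Max}$ and ${\rm Spec}$ do transport. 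Your final argument for the three biconditionals (each element of the unions has a unique non--$\nabla $ slot, and the assignment $\theta _i\mapsto \nabla \oplus \ldots \oplus \theta _i\oplus \ldots \oplus \nabla $ is injective in that slot) is the same as what the paper's back--reference to Lemma \ref{specprod} amounts to, so nothing is missing there.
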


\begin{proof} (\ref{ordsum1}) By Lemma \ref{ordsumdacs}.

\noindent (\ref{ordsum0}) By Lemma \ref{cardsumcongr}.

\noindent (\ref{ordsum2}) By (\ref{ordsum1}) and (\ref{ordsum0}).

\noindent (\ref{ordsum3}) and (\ref{ordsum4}) follow from (\ref{ordsum1}) and (\ref{ordsum2}) through a straightforward proof similar to the one for Lemma \ref{specprod}.\end{proof}

If $L$, $L^{\prime }$, $M$ and $M^{\prime }$ are bounded lattices and $h:L\rightarrow M$ and $h^{\prime }:L^{\prime }\rightarrow M^{\prime }$ are bounded lattice morphisms, then we shall denote by $h\oplus h^{\prime }:L\oplus L^{\prime }\rightarrow M\oplus M^{\prime }$ the function defined by: for all $x\in L\oplus L^{\prime }$, $(h\oplus h^{\prime })(x)=\begin{cases}h(x), & x\in L,\\ h^{\prime }(x), & x\in L^{\prime }.\end{cases}$ Clearly, $h\oplus h^{\prime }$ is a bounded lattice morphism.

\begin{corollary} Let $n\in \N ^*$, $L_i$ and $M_i$ be bounded lattices and $h_i:L_i\rightarrow M_i$ be a bounded lattice morphism for every $i\in \overline{1,n}$, and $\displaystyle h=\bigoplus _{i=1}^nh_i:\bigoplus _{i=1}^nL_i\rightarrow \bigoplus _{i=1}^nM_i$. Then:\begin{itemize}
\item $h$ is Max--admissible iff $h_1,\ldots ,h_n$ are Max--admissible;
\item $h$ is admissible iff $h_1,\ldots ,h_n$ are admissible.\end{itemize}\label{corsumord}\end{corollary}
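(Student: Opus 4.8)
The plan is to reduce the statement to an ordinal--sum analogue of the product formula for inverse images of congruences, and then to run verbatim the argument behind Corollary \ref{corprod}. The step I would carry out first is the identity
\[
h^*\Bigl(\bigoplus_{i=1}^n\beta_i\Bigr)=\bigoplus_{i=1}^nh_i^*(\beta_i)\qquad\text{for all }\beta_i\in{\rm Con}(M_i),\ i\in\overline{1,n},
\]
both sides being genuine congruences of $\displaystyle L=\bigoplus_{i=1}^nL_i$ (the left one by Section \ref{preliminaries}, the right one by Proposition \ref{ordsum}, (\ref{ordsum1})). This is the ordinal--sum counterpart of the relation $f^*(\prod_iS_i)=\prod_if_i^*(S_i)$ recorded in Section \ref{preliminaries} for direct products, and it is exactly what lets $h^*$ transport the decomposition of Proposition \ref{ordsum} through $h$.

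To prove the displayed identity I would argue by associativity of the ordinal sum together with induction, reducing to the two--term case $h_1\oplus h_2:L_1\oplus L_2\rightarrow M_1\oplus M_2$, and there compare the two congruences class by class. The decisive facts are that each $h_i$, being a bounded lattice morphism, sends $L_i$ into $M_i$ and carries the junction element $1_{L_i}=0_{L_{i+1}}$ to $1_{M_i}=0_{M_{i+1}}$. Hence, for $x,y$ in the same summand $L_i$ one has $(x,y)\in h^*(\bigoplus_j\beta_j)$ iff $(h_i(x),h_i(y))\in\beta_i$ iff $(x,y)\in h_i^*(\beta_i)$, since two elements of $M_i$ are $\bigoplus_j\beta_j$--related exactly when they are $\beta_i$--related (the junction merging only extends the top and bottom classes of $M_i$ outward, never merging distinct $\beta_i$--classes inside $M_i$). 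For $x,y$ in distinct summands, $h(x)$ and $h(y)$ can be $\bigoplus_j\beta_j$--related only through the merged junction classes, and $h(x)\in 1_{M_i}/\beta_i$ iff $x\in 1_{L_i}/h_i^*(\beta_i)$ (dually at the bottom); this is precisely the condition defining the junction merge in $\bigoplus_jh_j^*(\beta_j)$, and it propagates correctly across fully collapsed summands because $h_j^*(\nabla_{M_j})=\nabla_{L_j}$.

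With the identity established, the two equivalences follow formally. For admissibility: if each $h_i$ is admissible and $\psi\in{\rm Spec}(M)$, then by Proposition \ref{ordsum}, (\ref{ordsum4}), $\psi$ has a single non--$\nabla$ summand $\theta_i\in{\rm Spec}(M_i)$, so the identity gives $h^*(\psi)=\nabla_{L_1}\oplus\cdots\oplus h_i^*(\theta_i)\oplus\cdots\oplus\nabla_{L_n}$ with $h_i^*(\theta_i)\in{\rm Spec}(L_i)$, whence $h^*(\psi)\in{\rm Spec}(L)$ by the same proposition. Conversely, fixing $k$ and an arbitrary $\theta_k\in{\rm Spec}(M_k)$, the congruence $\psi$ with $\theta_k$ in position $k$ and $\nabla$ elsewhere lies in ${\rm Spec}(M)$; admissibility of $h$ and the identity force $\nabla_{L_1}\oplus\cdots\oplus h_k^*(\theta_k)\oplus\cdots\oplus\nabla_{L_n}\in{\rm Spec}(L)$, and since $\nabla_L\notin{\rm Spec}(L)$ we get $h_k^*(\theta_k)\neq\nabla_{L_k}$, so Proposition \ref{ordsum}, (\ref{ordsum4}), yields $h_k^*(\theta_k)\in{\rm Spec}(L_k)$, i.e. $h_k$ is admissible (vacuously so when ${\rm Spec}(M_k)=\emptyset$). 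Replacing ${\rm Spec}$ by ${\rm Max}$ and (\ref{ordsum4}) by (\ref{ordsum3}) throughout gives the Max--admissible equivalence word for word.

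The main obstacle is the first step, the inverse--image identity, and within it the bookkeeping at the junctions: one must verify that the merging of the top class of one summand with the bottom class of the next is respected by $h^*$, including the case of collapsed intermediate summands where a single class spans three or more components. Everything after that is a transcription of the direct--product argument with $\prod$ replaced by $\bigoplus$ and Proposition \ref{specprodfin} replaced by Proposition \ref{ordsum}.
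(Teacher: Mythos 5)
Your proof is correct and follows exactly the route the paper intends: Corollary \ref{corsumord} is stated there without proof as the ordinal--sum analogue of Corollary \ref{corprod}, and your identity $h^*\bigl(\bigoplus_{i=1}^n\beta_i\bigr)=\bigoplus_{i=1}^nh_i^*(\beta_i)$ is precisely the missing counterpart of the componentwise identity $f^*(\prod_{i}S_i)=\prod_{i}f_i^*(S_i)$ recorded in Section \ref{preliminaries} for direct products, after which Proposition \ref{ordsum}, (\ref{ordsum3}) and (\ref{ordsum4}), yields both equivalences as you describe. Your junction bookkeeping is also sound --- boundedness of the $h_i$ gives $h_i(1)=1$ and $h_i(0)=0$ at the junctions, and $h_j^*(\nabla_{M_j})=\nabla_{L_j}$ handles collapsed intermediate summands --- so the argument is complete.
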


\begin{remark} The statements in Proposition \ref{ordsum} and Corollary \ref{corsumord} hold even if $L_1$ and $M_1$ do not have a $0$ and $L_n$ and $M_n$ do not have a $1$.

The statements in Proposition \ref{ordsum} also hold for any bounded orderred structures whose ordinal sums have congruences exactly of the form in Lemma \ref{ordsumdacs}; if such structures have the property that the ordinal sum between two morphisms, defined as above, is again a morphism, then they also fulfill the statements in Corollary \ref{corsumord}.\end{remark}

\section{Prime, Maximal and Two--class Congruences in Particular Kinds of Lattices and Universal Algebras}
\label{speclat}

In this section we point out certain kinds of lattices and congruence--distributive algebras in which either the prime congruences coincide to the maximal ones, or the maximal congruences coincide to the two--class ones, or both of these relationships hold. Such algebras are important for the study of admissible and Max--admissible morphisms. 

Let $M$ be a non--empty set. Clearly, ${\rm Eq}_0(M)=\emptyset $, ${\rm Eq}_1(M)=\{\Delta _M\}$ and ${\rm Eq}_{\kappa }(M)\cap {\rm Eq}_{\lambda }(M)=\emptyset $ for any cardinality $\lambda \neq \kappa $. It is well known and immediate that, for all $\phi ,\psi \in {\rm Eq}(M)$, $\phi \subseteq \psi $ iff $\phi $ is a refinement of $\psi $, that is each class of $\psi $ is a union of classes of $\phi $, which implies $|M/\psi |\leq |M/\phi |$; clearly, if $\phi \subseteq \psi $ and $M/\phi $ is finite, then: $\phi =\psi $ iff $|M/\phi |=|M/\psi |$.

\begin{remark} Clearly, ${\rm Con}_2(A)\subseteq {\rm Max}(A)$. Indeed, if $\mu \in {\rm Con}_2(A)$, then $\mu \notin {\rm Con}_1(A)=\{\nabla _A\}$, so $\mu $ is a proper congruence of $A$, and, for any $\theta \in {\rm Con}(A)$, $\mu \subsetneq \theta $ iff $\theta \in {\rm Con}_1(A)=\{\nabla _A\}$ iff $\theta =\nabla _A$; therefore $\mu \in {\rm Max}(A)$.\label{2max}\end{remark}

In the case of congruence--distributive varieties, the second statement in Lemma \ref{folclor} holds even without $\nabla _A$ being finitely generated:

\begin{lemma} If ${\cal C}$ is congruence--distributive, or ${\cal C}$ is congruence--modular, $A$ is congruence--distributive and the commutator in $A$ equals the intersection of congruences, then ${\rm Max}(A)\subseteq {\rm Spec}(A)$.\label{specdistrib}\end{lemma}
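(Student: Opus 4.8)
The plan is to show that every maximal congruence is prime by reducing the primality condition to a statement about prime elements of the distributive lattice ${\rm Con}(A)$. First I would observe that, under either hypothesis, two facts hold simultaneously: the commutator in $A$ equals the intersection of congruences (by assumption in the second case, and by Theorem \ref{distrib} in the first), and ${\rm Con}(A)$ is a distributive lattice (since $A$ is congruence--distributive in both cases). Consequently, for any $\alpha ,\beta \in {\rm Con}(A)$ the condition $[\alpha ,\beta ]_A\subseteq \phi $ is the same as $\alpha \cap \beta \subseteq \phi $, so that by Remark \ref{primdistrib} it suffices to prove that each $\phi \in {\rm Max}(A)$ is a prime element of ${\rm Con}(A)$. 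If ${\rm Max}(A)=\emptyset $ there is nothing to prove.

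Next I would fix $\phi \in {\rm Max}(A)$, so that $\phi \neq \nabla _A$ and $\phi $ is a maximal element of $({\rm Con}(A)\setminus \{\nabla _A\},\subseteq )$, and take arbitrary $\alpha ,\beta \in {\rm Con}(A)$ with $\alpha \cap \beta \subseteq \phi $. Arguing by absurdum, I would assume $\alpha \nsubseteq \phi $ and $\beta \nsubseteq \phi $, which gives the strict inclusions $\phi \subsetneq \phi \vee \alpha $ and $\phi \subsetneq \phi \vee \beta $. Since $\phi $ is maximal among the proper congruences and $\phi \vee \alpha ,\phi \vee \beta \in {\rm Con}(A)$, maximality forces $\phi \vee \alpha =\nabla _A$ and $\phi \vee \beta =\nabla _A$.

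The key computation then uses distributivity of ${\rm Con}(A)$ in its dual form: $\nabla _A=(\phi \vee \alpha )\cap (\phi \vee \beta )=\phi \vee (\alpha \cap \beta )$. Because $\alpha \cap \beta \subseteq \phi $, the right--hand side collapses to $\phi $, whence $\phi =\nabla _A$, contradicting $\phi \neq \nabla _A$. Therefore $\alpha \subseteq \phi $ or $\beta \subseteq \phi $, i.e.\ $\phi $ is a prime element of ${\rm Con}(A)$, and hence, by the reduction above, $\phi \in {\rm Spec}(A)$; this proves ${\rm Max}(A)\subseteq {\rm Spec}(A)$.

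I expect no serious obstacle here; the argument is short once the two standing facts (commutator $=$ intersection, and distributivity) are separated out. The only point requiring care is to confirm that both hypotheses of the lemma really do supply both of these facts at once --- in particular that congruence--distributivity of $A$, rather than mere congruence--modularity, is what licenses the dual distributive identity $(\phi \vee \alpha )\cap (\phi \vee \beta )=\phi \vee (\alpha \cap \beta )$, which is the crux of the proof. This is exactly the step where the finite generation of $\nabla _A$ assumed in Lemma \ref{folclor} gets replaced by distributivity.
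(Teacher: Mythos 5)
Your proposal is correct and follows essentially the same route as the paper's proof: fix a maximal $\phi$, suppose $\alpha\cap\beta\subseteq\phi$ with $\alpha\nsubseteq\phi$ and $\beta\nsubseteq\phi$, use maximality to get $\phi\vee\alpha=\phi\vee\beta=\nabla_A$, and derive the contradiction $\phi=(\alpha\cap\beta)\vee\phi=(\phi\vee\alpha)\cap(\phi\vee\beta)=\nabla_A$ via distributivity of ${\rm Con}(A)$. The only cosmetic difference is that you first reduce primality of congruences to primality of lattice elements via Remark \ref{primdistrib}, whereas the paper works directly with $[\alpha,\beta]_A=\alpha\cap\beta$; the computation is identical.
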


\begin{proof} Let $\theta \in {\rm Max}(A)$. Assume by absurdum that there exist $\alpha ,\beta \in {\rm Con}(A)$ such that $[\alpha ,\beta ]_A=\alpha \cap \beta \subseteq \theta $, but $\alpha \nsubseteq \theta $ and $\beta \nsubseteq \theta $. Then $\alpha \vee \theta \neq \theta $ and $\beta \vee \theta \neq \theta $, so, since $\theta \subseteq \alpha \vee \theta $ and $\theta \subseteq \beta \vee \theta $, we have $\theta \subsetneq \alpha \vee \theta $ and $\theta \subsetneq \beta \vee \theta $. $\theta \in {\rm Max}(A)$, hence $\alpha \vee \theta =\beta \vee \theta =\nabla _A$, thus $\theta =(\alpha \cap \beta )\vee \theta =(\alpha \vee \theta )\cap (\beta \vee \theta )=\nabla _A\cap \nabla _A=\nabla _A$, which is a contradiction to $\theta \in {\rm Max}(A)$. Therefore $\theta \in {\rm Spec}(A)$.\end{proof}

\begin{remark} Notice, from Example \ref{fiectip}, that ${\rm Max}({\cal P})={\rm Con}_2({\cal P})$.\label{2pentagon}\end{remark}

\begin{proposition} If $B$ is a Boolean algebra, then ${\rm Spec}(B)={\rm Max}(B)={\rm Con}_2(B)$.\label{specbool}\end{proposition}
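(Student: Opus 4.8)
The plan is to establish the two inclusions that are not yet available and combine them with results already proved to obtain the chain of equalities. First I would record that the variety of Boolean algebras is congruence--distributive: for any Boolean algebra $B$ the map $\chi _B:{\rm Id}(B)\rightarrow {\rm Con}(B)$ is a bounded lattice isomorphism, so ${\rm Con}(B)$ is distributive. Consequently Remark \ref{2max} gives ${\rm Con}_2(B)\subseteq {\rm Max}(B)$, and Lemma \ref{specdistrib} gives ${\rm Max}(B)\subseteq {\rm Spec}(B)$. Hence it suffices to prove the single remaining inclusion ${\rm Spec}(B)\subseteq {\rm Con}_2(B)$, which closes the loop and forces all three sets to coincide.

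For that inclusion I would take $\theta \in {\rm Spec}(B)$ and show that $|B/\theta |=2$. By Corollary \ref{mypartic}, (\ref{mypartic2}), $\theta $ is a proper, meet--irreducible congruence, so $B/\theta $ is a non--trivial Boolean algebra. Since $\theta =\alpha \cap \beta $ automatically forces $\alpha ,\beta \in [\theta )$, meet--irreducibility of $\theta $ in ${\rm Con}(B)$ is the same as meet--irreducibility of $\theta $ inside the interval $[\theta )$; transporting this through the bounded lattice isomorphism $[\theta )\cong {\rm Con}(B/\theta )$ (which sends $\theta $ to $\Delta _{B/\theta }$), I obtain that $\Delta _{B/\theta }$ is meet--irreducible in ${\rm Con}(B/\theta )$.

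The key step is then to show that a non--trivial Boolean algebra $C$ (here $C=B/\theta $) whose bottom congruence $\Delta _C$ is meet--irreducible must satisfy $|C|=2$. I would argue by contraposition, using the factorization of Boolean algebras: if $C$ had an element $c\notin \{0,1\}$, then $x\mapsto (x\wedge c,x\wedge c^{\prime })$ is an isomorphism $C\cong (c]\times (c^{\prime }]$ onto a product of two non--trivial Boolean algebras, so the two complementary factor congruences $\alpha _0,\beta _0$ satisfy $\alpha _0\cap \beta _0=\Delta _C$ with $\alpha _0\neq \Delta _C$ and $\beta _0\neq \Delta _C$, contradicting the meet--irreducibility of $\Delta _C$. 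Hence $|B/\theta |=|C|=2$, that is $\theta \in {\rm Con}_2(B)$, which completes the argument.

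The main obstacle is precisely this last step --- verifying that direct indecomposability of $C$ (forced by the meet--irreducibility of $\Delta _C$) pins $C$ down to the two--element algebra; everything else is bookkeeping with the already proved inclusions and the interval isomorphism $[\theta )\cong {\rm Con}(B/\theta )$. As an alternative that sidesteps the product decomposition, one can transport the entire problem to ideals through $\chi _B$: there all three sets correspond to the maximal ideals of $B$, using that in a Boolean algebra prime and maximal ideals coincide and that $|B/I|=2$ holds exactly when $I$ is a maximal ideal.
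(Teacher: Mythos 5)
Your proof is correct, but it takes a genuinely different route from the paper's. The paper's proof is a one-step transport through the bounded lattice isomorphism $\varphi _B:{\rm Filt}(B)\rightarrow {\rm Con}(B)$: by Remark \ref{primdistrib} the prime congruences are exactly the prime elements of ${\rm Con}(B)$, and the classical Boolean facts ${\rm Spec}_{\rm Filt}(B)={\rm Max}_{\rm Filt}(B)=\{F\in {\rm Filt}(B)\ |\ B/\varphi _B(F)\cong {\cal L}_2\}$ then yield all three equalities at once --- indeed, the ideal-theoretic alternative you sketch in your last sentence is precisely the paper's argument in dual form. Your main argument instead sandwiches ${\rm Con}_2(B)\subseteq {\rm Max}(B)\subseteq {\rm Spec}(B)$ via Remark \ref{2max} and Lemma \ref{specdistrib} (legitimate once you have noted congruence--distributivity via $\chi _B$), and closes the circle with a structural step: for $\theta \in {\rm Spec}(B)$, Corollary \ref{mypartic} gives meet--irreducibility, which passes correctly to $\Delta _{B/\theta }$ through the interval isomorphism $[\theta )\cong {\rm Con}(B/\theta )$ (your observation that $\theta =\alpha \cap \beta $ forces $\alpha ,\beta \in [\theta )$ is exactly the point needed), and the factorization $C\cong (c]\times (c^{\prime }]$ for $c\notin \{0,1\}$ correctly rules out $|C|>2$: the two factor congruences are distinct from $\Delta _C$ because $(c^{\prime },0)$ lies in one and $(c,0)$ in the other, while their intersection is $\Delta _C$. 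What each approach buys: the paper's proof is shorter but leans on the classical filter-theoretic facts about Boolean algebras as a black box; yours stays at the level of congruences, is self-contained modulo the standard Boolean factorization, and isolates a reusable principle (a non-trivial directly indecomposable Boolean algebra is ${\cal L}_2$, so meet--irreducibility of the bottom congruence of the quotient pins the quotient down to two elements), at the cost of the quotient-transfer bookkeeping.
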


\begin{proof} There are many ways to prove this statement. One way is to use the well--known fact that ${\rm Spec}_{\rm Filt}(B)={\rm Max}_{\rm Filt}(B)=\{F\in {\rm Filt}(B)\ |\ B/F=B/\varphi _B(F)\cong {\cal L}_2\}$ and the fact that $\varphi _B:{\rm Filt}(B)\rightarrow {\rm Con}(B)$ is a bounded lattice isomorphism, thus, by Remark \ref{primdistrib}, ${\rm Spec}(B)=\varphi _B({\rm Spec}_{\rm Filt}(B))=\varphi _B({\rm Max}_{\rm Filt}(B))={\rm Max}(B)=\{\varphi _B(F)\ |\ F\in {\rm Filt}(B),B/\varphi _B(F)\cong {\cal L}_2\}=\{\phi \in {\rm Con}(B)\ |\ B/\phi \cong {\cal L}_2\}={\rm Con}_2(B)$.\end{proof}

\begin{theorem}{\rm \cite[Theorem 8.15, p. 128]{blyth}} If $L$ is a bounded  distributive lattice, then there exists a Boolean algebra $B$ such that the lattices ${\rm Con}(L)$ and ${\rm Con}(B)$ are isomorphic.\label{distribvsbool}\end{theorem}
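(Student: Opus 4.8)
The plan is to realize the required Boolean algebra as the lattice of \emph{compact} (equivalently, finitely generated) congruences of $L$ itself. Write $K$ for the set of compact elements of ${\rm Con}(L)$; these are exactly the finite joins of principal congruences $Cg_L(a,b)$, and since lattices are congruence--distributive, ${\rm Con}(L)$ is a distributive, bounded, algebraic lattice. My goal is to show that $K$ is a Boolean subalgebra of ${\rm Con}(L)$, and then to identify ${\rm Con}(L)$ with ${\rm Id}(K)$ and ${\rm Id}(K)$ with ${\rm Con}(K)$.

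First I would record two structural facts. Since $L$ is bounded, $\nabla _L=Cg_L(0,1)$ is principal, hence compact, so $K$ has greatest element $\nabla _L$ (and least element $\Delta _L$); thus $K$ is a bounded join--subsemilattice of ${\rm Con}(L)$. Next, using the standard description of principal congruences in a distributive lattice \cite{gratzer} -- for $a\leq b$, $(x,y)\in Cg_L(a,b)$ iff $x\wedge a=y\wedge a$ and $x\vee b=y\vee b$ -- I would show that every principal congruence is complemented in ${\rm Con}(L)$: for $a\leq b$ the congruence $\psi =Cg_L(0,a)\vee Cg_L(b,1)$ satisfies $Cg_L(a,b)\vee \psi =Cg_L(0,1)=\nabla _L$ (as $0,a,b,1$ get identified in the join) and $Cg_L(a,b)\cap \psi =\Delta _L$, the latter by distributivity of ${\rm Con}(L)$ together with the cancellation law that $x\wedge a=y\wedge a$ and $x\vee a=y\vee a$ force $x=y$ in a distributive lattice. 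Since any $Cg_L(a,b)$ equals $Cg_L(a\wedge b,a\vee b)$, every principal congruence is complemented; as the complemented elements of a bounded distributive lattice form a sublattice, every compact congruence (a finite join of principals) is complemented in ${\rm Con}(L)$.

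The core step is then to upgrade these complements, which a priori live in ${\rm Con}(L)$, to complements inside $K$, and to deduce that $K$ is closed under meets. Given a compact $\kappa $ with complement $\kappa ^*$ in ${\rm Con}(L)$, algebraicity gives $\kappa ^*=\bigvee \{\mu \in K\ |\ \mu \subseteq \kappa ^*\}$; since $\nabla _L=\kappa \vee \kappa ^*$ is compact, finitely many of these $\mu $'s join to a compact $\lambda \subseteq \kappa ^*$ with $\kappa \vee \lambda =\nabla _L$, and then $\kappa \cap \lambda \subseteq \kappa \cap \kappa ^*=\Delta _L$, so $\lambda \in K$ is a complement of $\kappa $ inside $K$. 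Because complements in a distributive lattice are unique, applying this to the compact congruence $\lambda _1\vee \lambda _2$ (where $\lambda _i\in K$ complements $\kappa _i\in K$) shows that its unique complement $\kappa _1\cap \kappa _2$ is compact; hence $K$ is closed under meets. Therefore $K$ is a bounded, distributive, complemented -- i.e. Boolean -- sublattice of ${\rm Con}(L)$.

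Finally I would assemble the isomorphisms. The standard reconstruction of an algebraic lattice from its compact elements \cite{bur} gives a lattice isomorphism ${\rm Con}(L)\cong {\rm Id}(K)$ via $\theta \mapsto \{\kappa \in K\ |\ \kappa \subseteq \theta \}$ (the inverse sending an ideal to its join), and this uses only that $K$ is the join--semilattice of compacts, now known to be a lattice. Taking $B=K$, which is a Boolean algebra, the recalled fact that $\chi _B:{\rm Id}(B)\rightarrow {\rm Con}(B)$ is a bounded lattice isomorphism for Boolean $B$ yields ${\rm Id}(K)\cong {\rm Con}(K)={\rm Con}(B)$. Composing, ${\rm Con}(L)\cong {\rm Con}(B)$. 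I expect the main obstacle to be the complementation argument of the third paragraph -- in particular justifying that the complement of a compact congruence can be chosen compact and that $K$ is meet--closed; the explicit principal--congruence formula and the compactness of $\nabla _L$ are what make this go through, and both rely essentially on $L$ being bounded and distributive.
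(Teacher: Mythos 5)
Your proposal is correct, but there is nothing in the paper to match it against line by line: the paper does not prove Theorem \ref{distribvsbool}, it imports the statement verbatim from \cite[Theorem 8.15, p. 128]{blyth} and uses it as a black box in the proof of Corollary \ref{mysuper}. So what you have produced is a self-contained substitute for the citation, and it checks out. The delicate points all go through: for $a\leq b$, the congruence $Cg_L(0,a)\vee Cg_L(b,1)$ is indeed a complement of $Cg_L(a,b)$, since the join contains $(0,1)$ and hence is $\nabla _L$, while the meet is $\Delta _L$ by distributivity of ${\rm Con}(L)$ plus the cancellation $x\wedge a=y\wedge a$, $x\vee a=y\vee a\Rightarrow x=y$ applied to each distributand (this, with $Cg_L(a,b)=Cg_L(a\wedge b,a\vee b)$ and the fact that complemented elements of a bounded distributive lattice form a sublattice, correctly handles all compacts); boundedness of $L$ enters exactly where you say it does, in this complement and in making $\nabla _L=Cg_L(0,1)$ compact; the algebraicity argument that the complement of a compact congruence is itself compact is sound, and uniqueness of complements plus the routine de Morgan computation (the complement of $\lambda _1\vee \lambda _2$ is $\kappa _1\cap \kappa _2$) yields meet--closure of $K$. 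The final assembly ${\rm Con}(L)\cong {\rm Id}(K)\cong {\rm Con}(K)$ is also fine, and it is a nice touch that the second isomorphism is literally the fact recalled in the paper's preliminaries, namely that $\chi _B:{\rm Id}(B)\rightarrow {\rm Con}(B)$ is an isomorphism when $B$ is Boolean. For comparison, the classical route to this theorem constructs $B$ externally, as the free Boolean extension of $L$, into which $L$ embeds so that restriction of congruences is an isomorphism ${\rm Con}(B)\rightarrow {\rm Con}(L)$; your $B=K$ is necessarily isomorphic to that Boolean algebra (an isomorphism of algebraic lattices matches up compact elements), but your version buys a proof from nothing beyond standard algebraic--lattice facts, at the price of exhibiting no structural relation between $L$ and $B$. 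One pedantic caveat: if one's convention requires $0\neq 1$ in a Boolean algebra, then your argument, like the theorem itself, silently needs $L$ non--trivial, since for trivial $L$ the only candidate $B$ is the trivial one.
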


\begin{corollary} If $L$ is a bounded distributive lattice, then ${\rm Spec}(L)={\rm Max}(L)={\rm Con}_2(L)$.\label{mysuper}\end{corollary}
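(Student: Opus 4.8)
The plan is to deduce the equality ${\rm Spec}(L)={\rm Max}(L)$ from the Boolean case by transport along a lattice isomorphism, and then to establish ${\rm Max}(L)={\rm Con}_2(L)$ by a direct argument on the quotient.

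First I would invoke Theorem \ref{distribvsbool} to fix a Boolean algebra $B$ together with a lattice isomorphism $\Phi :{\rm Con}(L)\rightarrow {\rm Con}(B)$. The key point is that both ${\rm Spec}$ and ${\rm Max}$ can be read off from the abstract lattice ${\rm Con}(\cdot )$ alone, and are therefore preserved by $\Phi $. Indeed, since the varieties of lattices and of Boolean algebras are both congruence--distributive, Remark \ref{primdistrib} identifies ${\rm Spec}(L)$ with the set of \emph{prime elements} of ${\rm Con}(L)$ and ${\rm Spec}(B)$ with the set of prime elements of ${\rm Con}(B)$; and, since $\nabla $ is the top of each congruence lattice, ${\rm Max}(L)$ and ${\rm Max}(B)$ are exactly the sets of \emph{coatoms} of ${\rm Con}(L)$ and ${\rm Con}(B)$, respectively. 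As being a prime element and being a coatom are notions invariant under lattice isomorphisms, $\Phi $ carries prime elements to prime elements and coatoms to coatoms, bijectively. By Proposition \ref{specbool} we have ${\rm Spec}(B)={\rm Max}(B)$, that is, prime elements and coatoms coincide in ${\rm Con}(B)$; transporting this coincidence back along $\Phi ^{-1}$ yields that prime elements and coatoms coincide in ${\rm Con}(L)$, i.e. ${\rm Spec}(L)={\rm Max}(L)$.

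It then remains to prove ${\rm Max}(L)={\rm Con}_2(L)$, and here the Boolean transport is of no help, since the property $|L/\theta |=2$ is not an invariant of the abstract lattice ${\rm Con}(L)$; this is the main obstacle, and I would handle it directly. The inclusion ${\rm Con}_2(L)\subseteq {\rm Max}(L)$ is immediate from Remark \ref{2max}. For the converse, let $\theta \in {\rm Max}(L)$. Then $L/\theta $ is a bounded distributive lattice (a quotient of $L$) and, by Remark \ref{congrcat}, ${\rm Con}(L/\theta )\cong {\cal L}_2$, so $L/\theta $ is non--trivial and its only congruences are $\Delta _{L/\theta }\neq \nabla _{L/\theta }$. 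Being non--trivial and bounded, $L/\theta $ has $0/\theta \neq 1/\theta $, so $\{1/\theta \}$ is a proper filter, whence, by the Prime Filter Theorem recalled in Section \ref{preliminaries}, $L/\theta $ possesses a prime filter $P$. By Remark \ref{con2lat}, $\mu =eq(P,(L/\theta )\setminus P)$ is then a two--class congruence of $L/\theta $, so $\mu \in {\rm Con}(L/\theta )=\{\Delta _{L/\theta },\nabla _{L/\theta }\}$. Since $\mu $ has exactly two classes it cannot equal $\nabla _{L/\theta }$, hence $\mu =\Delta _{L/\theta }$; but $\Delta _{L/\theta }$ has $|L/\theta |$ classes, forcing $|L/\theta |=2$, i.e. $\theta \in {\rm Con}_2(L)$. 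Combining the two parts gives ${\rm Spec}(L)={\rm Max}(L)={\rm Con}_2(L)$.
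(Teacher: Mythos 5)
Your proof is correct, and while its first half coincides with the paper's, its second half takes a genuinely different route. For ${\rm Spec}(L)={\rm Max}(L)$ you do exactly what the paper does --- invoke Theorem \ref{distribvsbool}, Proposition \ref{specbool} and Remark \ref{primdistrib} --- merely spelling out the transport of prime elements and coatoms along the isomorphism, which the paper leaves implicit. For ${\rm Max}(L)\subseteq {\rm Con}_2(L)$, however, the paper stays inside $L$: for $\phi \in {\rm Spec}(L)={\rm Max}(L)$ it sets $F=1/\phi $ and proves by contradiction that $F$ is a prime filter --- otherwise $F$ would be an intersection $P\cap Q$ of two larger filters with $P$ prime, so $\varphi _L(P)\cap \varphi _L(Q)=\varphi _L(F)\subseteq \phi $ while $\varphi _L(P)\nsubseteq \phi $ and $\varphi _L(Q)\nsubseteq \phi $, contradicting the \emph{primality} of $\phi $ --- whence $\phi \subseteq eq(F,L\setminus F)\in {\rm Con}_2(L)$ and maximality forces equality. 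You instead pass to the quotient: Remark \ref{congrcat} gives ${\rm Con}(L/\theta )=\{\Delta _{L/\theta },\nabla _{L/\theta }\}$ for $\theta \in {\rm Max}(L)$, the Prime Filter Theorem (in the form recalled in Section \ref{preliminaries}, applied to the proper filter $\{1/\theta \}$ of the non--trivial bounded distributive quotient $L/\theta $) yields a prime filter of $L/\theta $, Remark \ref{con2lat} converts it into a two--class congruence of $L/\theta $, which can only be $\Delta _{L/\theta }$, forcing $|L/\theta |=2$. Your version uses only maximality of $\theta $ in this half (the paper's uses the already--established primality of $\phi $, together with the $\varphi _L$ correspondence and the decomposition of proper filters into prime ones applied in $L$ itself), so it is somewhat more economical and readily generalizes to any situation where every non--trivial quotient of the algebra in question admits a two--class congruence; the paper's argument, in exchange, delivers the extra concrete information that every maximal congruence of $L$ equals $eq(F,L\setminus F)$ with $F=1/\phi $ a prime filter of $L$, i.e.\ a filter--theoretic description of ${\rm Max}(L)$ rather than just the cardinality statement.
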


\begin{proof} By Proposition \ref{specbool}, Theorem \ref{distribvsbool} and Remark \ref{primdistrib}, ${\rm Spec}(L)={\rm Max}(L)$. By Remark \ref{2max}, ${\rm Con}_2(L)\subseteq {\rm Max}(L)$. Now let $\phi \in {\rm Spec}(L)={\rm Max}(L)$, and let $F=1/\phi \in {\rm Filt}(L)$. Since $\phi \neq \nabla _L$, we have $F\neq L$. Assume by absurdum that $F\notin {\rm Spec}_{\rm Filt}(L)$, so that there are at least two distinct prime filters of $L$ which include $F$. Let $P\in {\rm Spec}_{\rm Filt}(L)$ such that $F\subseteq P$ and $Q$ be the intersection of the prime filters of $L$ which differ from $P$ and include $F$, so that $Q\in {\rm Filt}(L)\setminus \{L\}$ and $F=P\cap G$, thus $\varphi _L(F)=\varphi _L(P)\cap \varphi _L(G)$. Since $F=1/\phi \in L/\phi $, it follows that $\varphi _L(F)\subseteq \phi $, so, by Theorem \ref{distrib}, $[\varphi _L(P),\varphi _L(G)]_L=\varphi _L(P)\cap \varphi _L(G)\subseteq \phi $. But $1/\varphi _L(P)=P\nsubseteq F=1/\phi $ and $1/\varphi _L(G)=G\nsubseteq F=1/\phi $, thus $\varphi _L(P)\nsubseteq /\phi $ and $\varphi _L(G)\nsubseteq /\phi $. This contradicts the primality of $\phi $. Hence $F\in {\rm Spec}_{\rm Filt}(L)$, therefore $\phi \subseteq eq(F,L\setminus F)\in {\rm Con}_2(L)\subseteq {\rm Con}(L)\setminus \{\nabla _L\}$, by Remark \ref{con2lat}. Since $\phi \in {\rm Max}(L)$, it follows that $\phi =eq(F,L\setminus F)\in {\rm Con}_2(L)$. Therefore ${\rm Con}_2(L)\subseteq {\rm Max}(L)={\rm Spec}(L)\subseteq {\rm Con}_2(L)$, hence ${\rm Spec}(L)={\rm Max}(L)={\rm Con}_2(L)$.\end{proof}

Note that, in the case of chains, the boundeness condition in Corollary \ref{mysuper} is not necessary:

\begin{lemma} If $L$ is a chain, then ${\rm Spec}(L)={\rm Max}(L)={\rm Con}_2(L)$.\label{chainprime}\end{lemma}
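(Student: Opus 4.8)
The plan is to prove the chain of inclusions ${\rm Con}_2(L)\subseteq {\rm Max}(L)\subseteq {\rm Spec}(L)\subseteq {\rm Con}_2(L)$, which forces the three sets to coincide. Since $L$ is a chain it is distributive, hence it lies in the congruence--distributive class of lattices; consequently the commutator in ${\rm Con}(L)$ is the intersection of congruences, and, by Remark \ref{primdistrib}, the prime congruences of $L$ are exactly the prime elements of the lattice ${\rm Con}(L)$.

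First I would dispose of the two routine inclusions. The inclusion ${\rm Con}_2(L)\subseteq {\rm Max}(L)$ is immediate from Remark \ref{2max}. The inclusion ${\rm Max}(L)\subseteq {\rm Spec}(L)$ follows directly from Lemma \ref{specdistrib} applied to the congruence--distributive class of lattices. These require no use of the chain hypothesis and leave only the reverse containment ${\rm Spec}(L)\subseteq {\rm Con}_2(L)$ to be settled.

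The heart of the argument is this last inclusion. Let $\phi \in {\rm Spec}(L)$; then $\phi $ is a proper prime element of ${\rm Con}(L)$, so $|L/\phi |\geq 2$, and I would argue by contradiction assuming $|L/\phi |\geq 3$. Picking elements in three distinct $\phi $--classes and relabelling (they are pairwise comparable since $L$ is a chain), I obtain $a<b<c$ with $a/\phi $, $b/\phi $, $c/\phi $ pairwise distinct. The decisive tool is Lemma \ref{chainconvex}, by which \emph{every} partition of $L$ into convex classes is a congruence: this lets me define $\alpha $ to be the congruence whose only non--singleton class is the interval $[a,b]=\{x\in L\ |\ a\leq x\leq b\}$ and $\beta $ the congruence whose only non--singleton class is $[b,c]$. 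Computing intersections of classes and using $[a,b]\cap [b,c]=\{b\}$ yields $\alpha \cap \beta =\Delta _L\subseteq \phi $, whereas $(a,b)\in \alpha \setminus \phi $ and $(b,c)\in \beta \setminus \phi $ give $\alpha \nsubseteq \phi $ and $\beta \nsubseteq \phi $. This contradicts the primality of $\phi $, forcing $|L/\phi |=2$, i.e. $\phi \in {\rm Con}_2(L)$.

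The main obstacle is precisely this construction, and specifically the freedom to realize arbitrary convex partitions as congruences; this is exactly what Lemma \ref{chainconvex} provides for chains, and it is what allows the boundedness hypothesis of Corollary \ref{mysuper} to be dropped here. (Alternatively, one could invoke Corollary \ref{mypartic}, (\ref{mypartic2}), to learn that a prime $\phi $ must be meet--irreducible, and then read the identity $\Delta _L=\alpha \cap \beta $ with $\Delta _L\subsetneq \alpha ,\beta $ inside the interval $[\phi )\cong {\rm Con}(L/\phi )$ as a failure of meet--irreducibility; but the direct primality computation above is the cleaner route.)
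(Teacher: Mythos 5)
Your proof is correct, and its skeleton coincides with the paper's: both establish ${\rm Con}_2(L)\subseteq {\rm Max}(L)\subseteq {\rm Spec}(L)$ from Remark \ref{2max} and Lemma \ref{specdistrib}, and both close the circle by showing that a prime $\phi $ with $|L/\phi |\geq 3$ is impossible, using Lemma \ref{chainconvex} to manufacture congruences from convex partitions. The genuine difference lies in the witnessing pair: the paper picks a middle class $D$ of $\phi $ and \emph{coarsens}, letting $\alpha $ collapse $D$ with all classes below it and $\beta $ collapse $D$ with all classes above it, so that $\phi \subsetneq \alpha ,\beta \subsetneq \nabla _L$ and $\alpha \cap \beta =\phi $ exactly; this decomposition simultaneously shows $\phi \notin {\rm Max}(L)$ and $\phi \notin {\rm Spec}(L)$ in one stroke. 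You instead take two small interval congruences, with non--singleton classes $[a,b]$ and $[b,c]$, and use $\alpha \cap \beta =\Delta _L\subseteq \phi $, exploiting that primality only requires $[\alpha ,\beta ]_L\subseteq \phi $ rather than equality. Your version buys a lighter verification (the intersection computation reduces to $[a,b]\cap [b,c]=\{b\}$, with no bookkeeping on the partition of $L/\phi $), at the cost of not directly re--deriving non--maximality, which you correctly delegate to the inclusion chain anyway; the paper's version buys the explicit exhibition of $\phi $ as a proper meet of two strictly larger congruences.

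One small caveat on your discarded parenthetical: as stated it does not typecheck, since your $\alpha $ and $\beta $ need not lie in $[\phi )$, so the identity $\Delta _L=\alpha \cap \beta $ cannot be read inside $[\phi )\cong {\rm Con}(L/\phi )$ directly. To run the meet--irreducibility route one would first replace $\alpha ,\beta $ by $\alpha \vee \phi $ and $\beta \vee \phi $ and use the distributivity of ${\rm Con}(L)$ to get $(\alpha \vee \phi )\cap (\beta \vee \phi )=(\alpha \cap \beta )\vee \phi =\phi $. Since that remark is explicitly not part of your argument, it does not affect the correctness of the proof.
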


\begin{proof} Let $L$ be a chain. By Remark \ref{2max} and Lemma \ref{specdistrib}, ${\rm Con}_2(L)\subseteq {\rm Max}(L)\subseteq {\rm Spec}(L)$. Now let $\theta \in {\rm Con}(L)$ such that $|L/\theta |\geq 3$. Then, by Remark \ref{chainconvex}, there exist $C,D,E\in L/\theta $ such that $C<D<E$, so that $M=\{B\in L/\theta \ |\ D<B\}\neq \emptyset $ and $N=\{B\in L/\theta \ |\ B<D\}\neq \emptyset $. Let $\displaystyle \alpha =eq(M\cup \{D\cup \bigcup _{B\in N}B\})$ and $\displaystyle \beta =eq(N\cup \{D\cup \bigcup _{B\in M}B\})$. Then, by Lemma \ref{chainconvex}, $\alpha ,\beta \in {\rm Con}(L)$. Clearly, $\alpha \cap \beta =\theta $, $\alpha \neq \beta $ and, since $|L/\alpha |\geq 2$ and $|L/\beta |\geq 2$, we have $\alpha \neq \nabla _L$ and $\beta \neq \nabla _L$; thus $\theta \subsetneq \alpha \subsetneq \nabla _L$, so $\theta \notin {\rm Max}(L)$. Also, by Theorem \ref{distrib}, $[\alpha ,\beta ]_L=\alpha \cap \beta =\theta \subseteq \theta $, but, clearly, $\alpha \nsubseteq \theta $ and $\beta \nsubseteq \theta $; thus  $\theta \notin {\rm Spec}(L)$. Since ${\rm Spec}(L)\subseteq {\rm Con}(L)\setminus \{\nabla _L\}={\rm Con}(L)\setminus {\rm Con}_1(L)$, it follows that ${\rm Spec}(L)\subseteq {\rm Con}_2(L)$. Hence ${\rm Con}_2(L)\subseteq {\rm Max}(L)\subseteq {\rm Spec}(L)\subseteq {\rm Con}_2(L)$, thus ${\rm Spec}(L)={\rm Max}(L)={\rm Con}_2(L)$.\end{proof}

\begin{theorem}{\rm \cite[Theorem 3.5.1, p. 75]{schmidt}} If $L$ is a finite modular lattice, then ${\rm Con}(L)$ is a Boolean algebra.\label{1latconbool}\end{theorem}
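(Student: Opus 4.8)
The plan is to combine the distributivity of the congruence lattice of a lattice with Birkhoff's representation of finite distributive lattices, reducing the claim to a statement about join--irreducible congruences, which I then control using modularity.

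First, since the class of lattices is congruence--distributive (recalled in Section~\ref{preliminaries}), ${\rm Con}(L)$ is a distributive lattice, and since $L$ is finite, ${\rm Con}(L)$ is a \emph{finite} distributive lattice. A finite distributive lattice is a Boolean algebra if and only if it is complemented, equivalently if and only if each of its join--irreducible elements is an atom, i.e.\ the poset of its join--irreducible elements is an antichain. Thus it suffices to prove that no two distinct join--irreducible congruences of $L$ are comparable.

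Next I would invoke the standard description of the join--irreducible congruences of a finite lattice: every congruence of $L$ is a join of principal congruences $Cg_L(a,b)$ with $a\prec b$ a prime interval (covering pair), and the join--irreducible congruences are exactly these $Cg_L(a,b)$. The order among them is governed by the forcing quasi--order on prime intervals: $Cg_L(c,d)\subseteq Cg_L(a,b)$ holds iff collapsing $[a,b]$ forces the collapse of $[c,d]$, which in a finite lattice happens iff $[c,d]$ is reachable from $[a,b]$ through a finite sequence of perspective (transposed) prime intervals. Hence the join--irreducibles form an antichain precisely when this quasi--order is symmetric, i.e.\ when $Cg_L(c,d)\subseteq Cg_L(a,b)$ implies $Cg_L(a,b)=Cg_L(c,d)$.

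The crux, where modularity enters, is to establish exactly this symmetry. In a modular lattice, if a prime interval $[a,b]$ is perspective to a prime interval $[c,d]$, say $a=b\wedge c$ and $d=b\vee c$, then the maps $x\mapsto x\vee c$ and $y\mapsto y\wedge b$ are mutually inverse isomorphisms between $[a,b]$ and $[c,d]$ (the diamond isomorphism for modular lattices, a direct consequence of the modular law). Consequently every elementary perspectivity can be traversed in both directions, so the forcing quasi--order on prime intervals collapses to the symmetric relation of projectivity; therefore $Cg_L(c,d)\subseteq Cg_L(a,b)$ forces $[a,b]$ and $[c,d]$ to be projective, whence $Cg_L(a,b)=Cg_L(c,d)$. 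Thus the join--irreducibles of ${\rm Con}(L)$ form an antichain and ${\rm Con}(L)$ is Boolean. The main obstacle is precisely this last step: making rigorous the generation of principal congruences by sequences of perspectivities of prime intervals and verifying that modularity makes each such perspectivity reversible --- the very property that fails for the pentagon ${\cal P}$, whose non--Boolean ${\rm Con}({\cal P})$ computed in Example~\ref{fiectip} displays the strictly comparable join--irreducibles $\gamma\subsetneq\alpha,\beta$ that modularity rules out.
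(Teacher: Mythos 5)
The paper offers no proof of this theorem at all --- it is imported verbatim from Schmidt's book \cite[Theorem 3.5.1]{schmidt} --- so your attempt can only be measured against the standard literature proof, whose architecture (finite distributive congruence lattice, Birkhoff representation, join--irreducibles $=$ congruences of prime intervals, Boolean iff these form an antichain) you reproduce correctly. There is, however, one genuinely false intermediate claim: the assertion that in an arbitrary finite lattice $Cg_L(c,d)\subseteq Cg_L(a,b)$ holds \emph{iff} $[c,d]$ is reachable from $[a,b]$ by a sequence of perspectivities (transpositions) of \emph{prime} intervals. Your own touchstone, the pentagon, refutes it: with the paper's labelling of ${\cal P}$ one has $\gamma =Cg_{\cal P}(y,z)\subsetneq \alpha =Cg_{\cal P}(0,y)$, yet $[y,z]$ is perspective to no prime interval other than itself (its only up--transpose candidate would need $c\wedge z=y$, its only down--transpose candidate $d\vee y=z$, and neither exists in ${\cal P}$). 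The actual forcing from $[0,y]$ to $[y,z]$ passes through a \emph{non-prime} interval: $0\equiv y$ gives $x\equiv 1$ by the join--translation $u\mapsto u\vee x$, and then the meet--translation $u\mapsto u\wedge z$ collapses $[0,z]$, after which one must pass to the subinterval $[y,z]\subseteq [0,z]$. The correct general description of congruence spreading (Dilworth, Gr\"atzer) is via \emph{weak} projectivity: transpositions \emph{combined with passage to subintervals}, and it is exactly the subinterval steps that your biconditional omits and that produce the comparable join--irreducibles of ${\rm Con}({\cal P})$.

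The repair is standard and shows that modularity buys you more than the reversibility you emphasize. In any lattice, if $a\equiv b$ then for any $c$ the collapsed interval $[a\vee c,b\vee c]$ is a transpose-up of the subinterval $[b\wedge (a\vee c),b]$ of $[a,b]$ (dually for meet--translations), so spreading decomposes into ``shrink, then transpose.'' When $[a,b]$ is prime its subintervals are trivial, and --- this is the key modular input, your diamond isomorphism --- every transpose of a prime interval in a modular lattice is isomorphic to it, hence again prime. So in a finite modular lattice the spreading never leaves the class of prime intervals and never needs a proper shrinking step: weak projectivity between prime intervals collapses to genuine projectivity, which is symmetric, and then $Cg_L(c,d)\subseteq Cg_L(a,b)$ forces equality, giving the antichain and the Boolean conclusion. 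Thus the essential role of modularity is the preservation of primeness under transposition (eliminating subinterval steps), with reversibility of perspectivities as the accompanying symmetry; you flagged this step as the main obstacle yourself, and once the weak--projectivity machinery is inserted your argument becomes precisely the proof found in Schmidt, Gr\"atzer and Crawley--Dilworth.
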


\begin{theorem}{\rm \cite[p. 80]{cwdw}} If $L$ is a relatively complemented lattice fulfilling the ACC, then ${\rm Con}(L)$ is a Boolean algebra.\label{2latconbool}\end{theorem}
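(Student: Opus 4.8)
The plan is to reduce the Boolean--algebra assertion to a complementation statement, and then to build complements of congruences out of relative complements in $L$ together with the covering structure supplied by the ACC. First I would record the easy half of the structure: since lattices are congruence--distributive, ${\rm Con}(L)$ is a \emph{distributive} lattice, and it is bounded, with least element $\Delta _L$ and greatest element $\nabla _L$. A bounded distributive lattice is a Boolean algebra exactly when each of its elements has a complement (necessarily unique). So the whole theorem reduces to showing that every $\theta \in {\rm Con}(L)$ has a complement $\theta ^c\in {\rm Con}(L)$, that is, $\theta \cap \theta ^c=\Delta _L$ and $\theta \vee \theta ^c=\nabla _L$.

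Next I would extract from the ACC the covering information I need. The ACC is equivalent to every non--empty subset of $L$ having a maximal element; hence, for $a<b$ in $L$, the set $\{x\in L\ |\ a\leq x<b\}$ has a maximal element $m$, and then $m\prec b$ is a covering pair (a \emph{prime quotient}) inside $[a,b]$. Thus $L$ is \emph{weakly atomic}: every non--trivial interval contains a prime quotient, indeed one at its top. This yields two consequences. (1) A congruence equals $\Delta _L$ iff it collapses no prime quotient: if $(a,b)\in \theta $ with $a<b$, the top prime quotient $[m,b]$ of $[a,b]$ satisfies $m=m\vee a\equiv _{\theta }m\vee b=b$, so $(m,b)\in \theta $. (2) If a congruence $\rho $ collapses \emph{every} prime quotient of $L$, then $\rho =\nabla _L$. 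I expect (2) to follow by a climbing argument that already uses relative complementation: given $a<b$, let $m$ be a maximal element of $(a/\rho )\cap [a,b]$ (it exists by the ACC); if $m<b$, pick $w$ with $m\leq w\prec b$ and a relative complement $w'$ of $w$ in $[m,b]$, so $w\wedge w'=m$ and $w\vee w'=b$. Since $[w,b]$ is collapsed, $w\equiv _{\rho }b$, hence $m=w\wedge w'\equiv _{\rho }b\wedge w'=w'$, so $w'\in (a/\rho )\cap [a,b]$ with $w'>m$ (as $w'=m$ would force $w\vee m=w=b$), contradicting maximality; therefore $m=b$ and $(a,b)\in \rho $.

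Finally I would construct the complement and verify its two defining equalities. Set $\displaystyle \theta ^c=\bigvee \{Cg_L(a,b)\ |\ a\prec b,\ (a,b)\notin \theta \}$. Every prime quotient is collapsed by $\theta $ or, by construction, by $\theta ^c$, so $\theta \vee \theta ^c$ collapses all prime quotients and hence equals $\nabla _L$ by consequence~(2). For $\theta \cap \theta ^c=\Delta _L$ I would, by consequence~(1), show that no prime quotient is collapsed by both $\theta $ and $\theta ^c$. Here is where relative complementation does the decisive work: the lemma to establish is that in a relatively complemented lattice \emph{perspectivity of prime quotients is transitive}, so that projectivity of prime quotients coincides with perspectivity and the prime quotients split into perspectivity classes; using the standard fact that $Cg_L(a,b)$ (for a prime quotient $[a,b]$) collapses a prime quotient $[c,d]$ iff they are projective, one gets that a congruence collapses a prime quotient iff it collapses its whole class, and that $\theta ^c$ collapses exactly the prime quotients lying in the classes of the generators $[a,b]$ with $(a,b)\notin \theta $. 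These classes are disjoint from those collapsed by $\theta $, so $\theta \cap \theta ^c$ collapses no prime quotient and is therefore $\Delta _L$.

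The main obstacle is precisely this last lemma, the transitivity of perspectivity of prime quotients in a relatively complemented lattice, together with the resulting clean description of which prime quotients a prime--generated join of congruences collapses; this is the point where one must combine relative complements with the covering structure with care, and it is the technical core of the argument in \cite{cwdw}. Everything else --- the reduction to complementation, weak atomicity, and the climbing argument for $\theta \vee \theta ^c=\nabla _L$ --- is comparatively routine once the ACC is used to produce maximal elements.
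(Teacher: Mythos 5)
Note first that the paper does not prove this statement: Theorem \ref{2latconbool} is quoted from Crawley--Dilworth \cite[p.\ 80]{cwdw} without proof, so your attempt can only be measured against the classical argument of Dilworth reproduced there, which is in outline exactly what you reconstruct. The parts you actually carry out are correct: the reduction to complementation (since ${\rm Con}(L)$ is a bounded distributive lattice), weak atomicity from the ACC, your consequence (1), and the climbing argument for consequence (2) --- pushing the maximal element $m$ of $(a/\rho )\cap [a,b]$ up to $b$ via a relative complement $w'$ of $w$ in $[m,b]$ --- are all sound, and together they do yield $\theta \vee \theta ^c=\nabla _L$ in full. Moreover, the ``clean description'' for joins that you list among the obstacles is in fact routine for \emph{prime} quotients: if $(c,d)\in \bigvee _i\theta _i$ with $c\prec d$, projecting a witnessing chain into $[c,d]$ by $e\mapsto (e\vee c)\wedge d$ produces a chain inside the two-element interval $[c,d]$, so $(c,d)\in \theta _i$ for a single $i$.

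The genuine gap is the meet half $\theta \cap \theta ^c=\Delta _L$, and it is worse than a ``standard fact'' plus a transitivity lemma. What is standard (Gr\"atzer's description of principal congruences) is: for a prime quotient $[c,d]$, $(c,d)\in Cg_L(a,b)$ iff $[c,d]$ is \emph{weakly projective into} $[a,b]$; the statement you invoke --- collapse iff \emph{projective} --- is false in general lattices. The paper's own pentagon (Example \ref{fiectip}) witnesses this: with $0<x<1$ and $0<y<z<1$, the only congruences of ${\cal P}$ containing $(x,1)$ are $\alpha =eq(\{0,y,z\},\{x,1\})$ and $\nabla _{\cal P}$, so $Cg_{\cal P}(x,1)=\alpha $, which collapses the prime quotient $[y,z]$; yet $[y,z]$ transposes only onto itself, hence is projective to no other prime quotient. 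So the entire mathematical burden of the theorem is the upgrade, valid in relatively complemented lattices with ACC, from weak projectivity to projectivity between prime quotients (equivalently, the symmetry of the relation ``$Cg_L(p)$ collapses $q$'' on prime quotients), which is what eliminates the subquotient-taking steps of a weak projectivity using relative complements; transitivity of perspectivity alone would not obviously accomplish this. Since you explicitly defer this lemma to \cite{cwdw} rather than prove it, your proposal is an accurate road map of the classical proof with its decisive step missing: as a standalone proof it is incomplete, though if that lemma were granted as a black box, the remaining pieces you supply would assemble into a correct argument.
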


\begin{remark} By Remark \ref{distribprod}, Proposition \ref{ordsum}, (\ref{ordsum1}), and Theorems \ref{1latconbool} and \ref{2latconbool}, if a lattice $L$ can be obtained through finite direct products and/or finite ordinal sums from finite modular lattices and relatively complemented lattices fulfilling the ACC, then ${\rm Con}(L)$ is a Boolean algebra.\label{auconbool}\end{remark}

\begin{corollary} If $L$ is a finite modular lattice or a relatively complemented lattice fulfilling the ACC, then ${\rm Spec}(L)={\rm Max}(L)$.\label{spec=max}\end{corollary}

\begin{proof} By Proposition \ref{conbool} and Theorems \ref{1latconbool} and \ref{2latconbool}.\end{proof}

\begin{proposition} Let $L$ be a lattice.

\begin{enumerate}
\item\label{spec2cls1} If $L$ can be obtained through finite direct products and/or finite ordinal sums from chains and/or bounded distributive lattices, then ${\rm Spec}(L)={\rm Max}(L)={\rm Con}_2(L)$.
\item\label{spec2cls2} If $L$ can be obtained through finite direct products and/or finite ordinal sums from chains and/or bounded distributive lattices and/or finite modular lattices and/or relatively complemented lattices fulfilling the ACC, then ${\rm Spec}(L)={\rm Max}(L)$.
\item\label{spec2cls3} If $L$ can be obtained through finite direct products and/or finite ordinal sums from chains and/or bounded distributive lattices and/or the pentagon, then ${\rm Max}(L)={\rm Con}_2(L)$.\end{enumerate}\label{spec2cls}\end{proposition}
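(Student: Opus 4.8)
The plan is to prove all three statements simultaneously by induction on the number of applications of the two construction operations---finite direct product and finite ordinal sum---needed to build $L$. The engine of the induction is the observation that each of the three equalities ${\rm Spec}={\rm Con}_2$, ${\rm Max}={\rm Con}_2$ and ${\rm Spec}={\rm Max}$ \emph{factors} through both operations: by Proposition \ref{specprodfin}, (\ref{specprodfin2}) and (\ref{specprodfin3}), and by Proposition \ref{ordsum}, (\ref{ordsum3}) and (\ref{ordsum4}), a given one of these equalities holds for a finite direct product, respectively a finite ordinal sum, if and only if it holds for every factor, respectively every summand. Since the class of lattices is congruence--distributive, Remark \ref{distribprod} guarantees it satisfies the equivalent conditions of Proposition \ref{prodcongr}, so Proposition \ref{specprodfin} indeed applies to every finite direct product of lattices; likewise a finite ordinal sum of bounded lattices is again a bounded lattice, so Proposition \ref{ordsum} applies and the construction stays within scope throughout the induction.

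For the base of the induction I would record that each primitive building block satisfies the relevant equalities. For (\ref{spec2cls1}), chains satisfy ${\rm Spec}={\rm Max}={\rm Con}_2$ by Lemma \ref{chainprime} and bounded distributive lattices do so by Corollary \ref{mysuper}. For (\ref{spec2cls2}), one adds that finite modular lattices and relatively complemented lattices fulfilling the ACC satisfy ${\rm Spec}={\rm Max}$ by Corollary \ref{spec=max}, while chains and bounded distributive lattices still do by the two results just cited. For (\ref{spec2cls3}), the extra block is the pentagon, for which ${\rm Max}({\cal P})={\rm Con}_2({\cal P})$ by Remark \ref{2pentagon}, and chains and bounded distributive lattices again satisfy ${\rm Max}={\rm Con}_2$ by Lemma \ref{chainprime} and Corollary \ref{mysuper}.

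The inductive step is then immediate: if $L=\prod _{i=1}^nL_i$ or $L=\bigoplus _{i=1}^nL_i$, each $L_i$ being obtainable with strictly fewer construction steps, the inductive hypothesis yields the desired equality for every $L_i$, and the factorization cited above transports it to $L$. I expect no genuine obstacle here, as Propositions \ref{specprodfin} and \ref{ordsum} have already done the substantive work; the only points requiring a little care are to check that congruence--distributivity of lattices licenses the product factorization (handled by Remark \ref{distribprod}) and that boundedness is preserved so that the ordinal--sum factorization keeps applying. If one prefers to avoid an explicit induction, the argument may be phrased structurally: for each fixed one of the three equalities, the class of lattices in which it holds is closed under finite direct products and finite ordinal sums and contains the relevant primitive blocks, hence contains every lattice described in the respective statement.
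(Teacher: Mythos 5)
Your proof is correct and takes essentially the same route as the paper, whose entire proof is the citation list ``Remark \ref{distribprod}, Corollary \ref{distribsemid}, Proposition \ref{ordsum}, Remark \ref{2pentagon}, Corollary \ref{mysuper}, Lemma \ref{chainprime} and Corollary \ref{spec=max}''; your induction merely makes explicit the structural recursion the paper leaves implicit. The one point you gloss over---chains need not be bounded, so an ordinal sum involving them is not literally a sum of bounded lattices---is harmless, because an unbounded chain can only occur as the first or last summand of a (well-defined) ordinal sum, and the remark following Corollary \ref{corsumord} states that Proposition \ref{ordsum} still holds when $L_1$ lacks a $0$ and $L_n$ lacks a $1$.
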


\begin{proof} By Remark \ref{distribprod}, Corollary \ref{distribsemid}, Proposition \ref{ordsum}, Remark \ref{2pentagon}, Corollary \ref{mysuper}, Lemma \ref{chainprime} and Corollary \ref{spec=max}.\end{proof}

\begin{example} Clearly, the cathegories of lattices pointed out above are not exhaustive for the properties they illustrate. Here is a finite lattice $N$ which is neither modular, nor relatively complemented, in fact which can not be obtained through either of the constructions in Proposition \ref{spec2cls}, but whose lattice of congruences is Boolean: ${\rm Con}(N)=\{\Delta _N,\nabla _N\}\cong {\cal L}_2$:\vspace*{-17pt}

\begin{center}
\begin{picture}(50,150)(0,0)
\put(30,25){\line(0,1){120}}
\put(30,25){\line(1,1){40}}
\put(70,65){\line(-1,1){20}}
\put(50,85){\line(1,1){20}}
\put(70,65){\circle*{3}}
\put(50,65){\circle*{3}}
\put(-30,85){\circle*{3}}
\put(30,25){\line(-1,1){60}}
\put(30,65){\line(1,-1){20}}
\put(30,65){\line(-1,-1){20}}
\put(30,65){\line(1,1){20}}

\put(30,65){\line(-1,1){20}}
\put(30,105){\line(1,-1){20}}
\put(30,105){\line(-1,-1){20}}
\put(30,105){\line(1,1){20}}
\put(30,105){\line(-1,1){20}}
\put(30,145){\line(1,-1){40}}
\put(70,105){\circle*{3}}
\put(50,105){\circle*{3}}
\put(30,145){\line(-1,-1){60}}
\put(50,45){\line(0,1){80}}
\put(30,25){\circle*{3}}
\put(10,45){\circle*{3}}

\put(30,45){\circle*{3}}
\put(50,45){\circle*{3}}
\put(10,85){\circle*{3}}
\put(30,85){\circle*{3}}
\put(50,85){\circle*{3}}
\put(10,125){\circle*{3}}
\put(30,125){\circle*{3}}
\put(50,125){\circle*{3}}
\put(30,65){\circle*{3}}
\put(30,105){\circle*{3}}
\put(30,145){\circle*{3}}
\put(25,10){$N$}
\end{picture}
\end{center}\vspace*{-17pt}

Many examples of such lattices can be constructed, and, of course, any such lattice can be inserted in the construction of the lattice $L$ from Proposition \ref{spec2cls}, (\ref{spec2cls2}), as well as in the construction from Remark \ref{auconbool}. See in \cite{bal}, \cite{birkhoff}, \cite{blyth}, \cite{cwdw}, \cite{gratzer}, \cite{schmidt} more types of lattices whose lattices of congruences are Boolean.\label{latn}\end{example}

\section{On Cardinalities of Quotient Sets and Direct and Inverse Images of Congruences Through Morphisms}
\label{throughmorph}

We start this section with a result concerning the cardinalities of quotient lattices through maximal congruences. Then we compare the cardinalities of quotient sets through equivalence relations with the cardinalities of quotient sets through the direct and inverse images of those equivalence relations through functions; we prove that result for sets, functions and equivalence relations because it does not need supplementary hypotheses; clearly, when applied to algebras and morphisms, it will give analogous results on congruences. Then we apply this result to morphisms and two--class congruences and we obtain more results on admissibility and Max--admissibility, also using the results in the previous sections.

\begin{proposition} Let $\kappa $ be a cardinal number. Then: there exists a lattice $L$ and a $\mu \in {\rm Max}(L)$ with $|L/\mu |=\kappa $ iff $\kappa =2$ or $\kappa \geq 5$.\label{cardquomax}\end{proposition}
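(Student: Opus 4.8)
The statement asserts that the possible cardinalities of a quotient lattice $L/\mu$ with $\mu \in {\rm Max}(L)$ are exactly $2$ and all cardinals $\geq 5$; equivalently, $3$ and $4$ are forbidden. The plan is to split the work into a \emph{forbidding} direction and a \emph{construction} direction. For the forbidding direction, I would use Remark \ref{congrcat}, which tells us that $\mu \in {\rm Max}(L)$ iff ${\rm Con}(L/\mu )\cong {\cal L}_2$; that is, $L/\mu$ is a lattice which is \emph{simple} (its only congruences are $\Delta$ and $\nabla$) and non--trivial. So the whole problem reduces to asking: for which cardinals $\kappa$ does there exist a simple lattice of cardinality $\kappa$? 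A $1$--element lattice is trivial (excluded since $\mu \neq \nabla_L$), and the $2$--element chain ${\cal L}_2$ is simple, giving $\kappa = 2$. The crux of the forbidding direction is to show that \emph{no simple lattice has $3$ or $4$ elements}.

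For the exclusion of $3$ and $4$, I would argue directly. Every $3$--element lattice is the chain ${\cal L}_3$, and by Lemma \ref{chainconvex} a chain's congruences are exactly the equivalences with convex classes; for ${\cal L}_3 = \{0 < a < 1\}$ the partition $eq(\{0,a\},\{1\})$ is a proper nontrivial congruence, so ${\cal L}_3$ is not simple, and indeed ${\rm Con}({\cal L}_3)\cong {\cal L}_3$ as noted after Example \ref{fiectip}. For $4$ elements, I would enumerate the (finitely many) lattices up to isomorphism: the chain ${\cal L}_4$, the ``diamond'' ${\cal L}_2^2$, and lattices containing a covering pair that splits off. Each of these is either a chain (handled by Lemma \ref{chainconvex}) or has a convex sublattice of size $2$ that forms a congruence class: for instance ${\cal L}_2^2$ has ${\rm Con}({\cal L}_2^2)\cong {\cal L}_2^2$ (Example \ref{fiectip}), which is not ${\cal L}_2$. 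The key general principle is that any lattice with $3$ or $4$ elements is distributive (every lattice with at most $4$ elements avoids both ${\cal D}$ and ${\cal P}$ as sublattices except ${\cal L}_2^2$ itself, which is distributive), hence by Corollary \ref{mysuper} its two--class congruences coincide with its maximal congruences and a careful count shows a proper congruence below $\nabla$ always exists unless $\kappa = 2$.

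For the construction direction, I must exhibit, for each $\kappa = 2$ and each $\kappa \geq 5$, a lattice $L$ and $\mu \in {\rm Max}(L)$ with $|L/\mu| = \kappa$; by the reduction above it suffices to build a \emph{simple} lattice of each such cardinality $\kappa$ (taking $L$ to be that lattice and $\mu = \Delta_L$, which lies in ${\rm Max}(L)$ precisely when $L$ is simple and nontrivial). The case $\kappa = 2$ is ${\cal L}_2$. For finite $\kappa \geq 5$, the standard example is $M_{\kappa - 2}$, the height--two lattice with $0$, $1$, and $\kappa - 2 \geq 3$ pairwise incomparable atoms--coatoms in the middle; such $M_n$ with $n \geq 3$ is a well--known simple modular lattice. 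For infinite $\kappa$, the same $M_\kappa$--type construction (a bottom, a top, and $\kappa$ incomparable middle elements) furnishes a simple lattice of the required cardinality. I expect the \textbf{main obstacle} to be verifying simplicity of these middle examples, i.e.\ checking that $M_n$ for $n \geq 3$ admits no proper nontrivial congruence: one shows any congruence identifying two distinct elements must, by the covering structure and transitivity forced through the incomparable atoms, collapse everything to $\nabla$. This is the one genuinely lattice--theoretic computation; the rest is bookkeeping via Remark \ref{congrcat} and the chain/distributive results already available.
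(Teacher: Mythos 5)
Your proposal is correct and follows essentially the same route as the paper: the reduction via Remark \ref{congrcat} to the existence of simple nontrivial lattices of cardinality $\kappa $, the exclusion of $\kappa \in \{3,4\}$ by enumerating ${\cal L}_3$, ${\cal L}_4$ and ${\cal L}_2^2$ and checking none is simple, and the construction for $\kappa \geq 5$ of the height--two lattice with $\kappa -2\geq 3$ pairwise incomparable middle elements (taking $\mu =\Delta _L$) is exactly the lattice the paper builds. The only cosmetic difference is in how non--simplicity of the small lattices is certified (you exhibit congruences directly or invoke distributivity with Corollary \ref{mysuper}, whereas the paper computes ${\rm Con}({\cal L}_3)\cong {\cal L}_2^2$ and ${\rm Con}({\cal L}_4)\cong {\cal L}_2^3$ via ordinal sums), which does not change the substance.
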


\begin{proof} Let $L$ be a lattice. Then ${\rm Con}_0(L)=\emptyset $ and ${\rm Con}_1(L)=\{\nabla _L\}$, which contain no proper, thus no maximal congruence of $L$. Proposition \ref{spec2cls}, (\ref{spec2cls3}), provides us with an infinity of examples of lattices having maximal congruences which determine quotient lattices of cardinality $2$.

Now let $\kappa \geq 5$. Let $M$ be a set with $|M|=\kappa -2\geq 3$ and $0$, $1$ be two elements which fulfill: $0,1\notin M$ and $0\neq 1$. Denote $L=M\cup \{0,1\}$ and $\leq =\{(0,a),(a,1)\ |\ a\in M\}$. Then $\leq $ is an order on $L$ and $(L,\leq )$ is a bounded modular non--distributive lattice, whose Hasse diagram we sketch here:\vspace*{-9pt}

\begin{center}
\begin{picture}(40,80)(0,0)
\put(30,25){\line(0,1){40}}
\put(30,25){\line(1,1){20}}
\put(30,25){\line(-1,1){20}}
\put(30,65){\line(1,-1){20}}
\put(30,65){\line(-1,-1){20}}
\put(30,25){\circle*{3}}
\put(10,45){\circle*{3}}
\put(30,45){\circle*{3}}
\put(50,45){\circle*{3}}
\put(60,45){$\ldots $}
\put(-7,45){$\ldots $}
\put(87,45){$\ldots $}
\put(80,45){\circle*{3}}
\put(30,65){\circle*{3}}
\put(30,65){\line(5,-2){50}}
\put(30,25){\line(5,2){50}}
\put(27,0){$L$}
\put(28,15){$0$}
\put(28,68){$1$}
\end{picture}\end{center}\vspace*{-5pt}

Clearly, ${\rm Con}(L)=\{\Delta _L,\nabla _L\}$, thus ${\rm Max}(L)={\rm Spec}(L)=\{\Delta _L\}$, and $|L/\Delta _L|=|L|=|M|+2=\kappa $.

Now let $L$ be again an arbitrary lattice, and let $\phi \in {\rm Con}_3(L)\cup {\rm Con}_4(L)$, so that $L/\phi $ is isomorphic to ${\cal L}_3$, ${\cal L}_4$ or ${\cal L}_2^2$. If $L/\phi \cong {\cal L}_2^2$, then, by Example \ref{fiectip}, ${\rm Con}(L/\phi )\cong {\rm Con}({\cal L}_2^2)\cong {\cal L}_2^2\ncong {\cal L}_2$, hence $\phi \notin {\rm Max}(L)$ by Remark \ref{congrcat}. Note that ${\cal L}_3\cong {\cal L}_2\oplus {\cal L}_2$ and ${\cal L}_4\cong {\cal L}_2\oplus {\cal L}_2\oplus {\cal L}_2$, hence ${\rm Con}({\cal L}_3)\cong {\rm Con}({\cal L}_2\oplus {\cal L}_2)\cong {\rm Con}({\cal L}_2)^2\cong {\cal L}_2^2$ and ${\rm Con}({\cal L}_4)\cong {\rm Con}({\cal L}_2\oplus {\cal L}_2\oplus {\cal L}_2)\cong {\rm Con}({\cal L}_2)^3\cong {\cal L}_2^3$, by Remark \ref{lant2} and Proposition \ref{ordsum}, (\ref{ordsum1}). Therefore, if $L/\phi \cong {\cal L}_3$ or $L/\phi \cong {\cal L}_4$, then ${\rm Con}(L/\phi )\cong {\rm Con}({\cal L}_3)\cong {\cal L}_2^2\ncong {\cal L}_2$ or ${\rm Con}(L/\phi )\cong {\rm Con}({\cal L}_4)\cong {\cal L}_2^3\ncong {\cal L}_2$, hence $\phi \notin {\rm Max}(L)$ by Remark \ref{congrcat}. Hence $L$ can have no maximal congruences $\mu $ with $|L/\mu |\in \{3,4\}$.\end{proof}

\begin{lemma} Let $M$ and $N$ be non--empty sets, $h:M\rightarrow N$, $\alpha \in {\rm Eq}(M)$, $\beta \in {\rm Eq}(N)$ and $\kappa $ be a cardinal number. Then:\begin{enumerate}
\item\label{cardclase1} $|M/h^*(\beta )|\leq |N/\beta |$ and $\displaystyle h^*({\rm Eq}_{\kappa }(N))\subseteq \bigcup _{c\leq \kappa }{\rm Eq}_c(M)$; 
\item\label{cardclase0} if $h$ is surjective, then: $|M/h^*(\beta )|=|N/\beta |$, $|M/\alpha |\geq |N/h(\alpha )|$, $h^*({\rm Eq}_{\kappa }(N))\subseteq {\rm Eq}_{\kappa }(M)$ and $\displaystyle h({\rm Eq}_{\kappa }(M))\subseteq \bigcup _{c\leq \kappa }{\rm Eq}_c(N)$;
\item\label{cardclase2} if $h$ is bijective, then : $|M/h^*(\beta )|=|N/\beta |$, $|M/\alpha |=|N/h(\alpha )|$, $h^*({\rm Eq}_{\kappa }(N))={\rm Eq}_{\kappa }(M)$ and $h({\rm Eq}_{\kappa }(M))={\rm Eq}_{\kappa }(N)$;
\item\label{cardclase3} $h^*({\rm Eq}_2(N))\subseteq \{\nabla _M\}\cup {\rm Eq}_2(M)$;
\item\label{cardclase4} if $(h^*)^{-1}(\{\nabla _M\})=\{\nabla _N\}$, then $h^*({\rm Eq}_2(N))\subseteq {\rm Eq}_2(M)$; 
\item\label{cardclase5} if $h$ is surjective, then $(h^*)^{-1}(\{\nabla _M\})=\{\nabla _N\}$, $h^*({\rm Eq}_2(N))\subseteq {\rm Eq}_2(M)$ and $h({\rm Eq}_2(M))={\rm Eq}_2(N)$.\end{enumerate}\label{cardclase}\end{lemma}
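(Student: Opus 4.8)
The whole lemma rests on two canonical maps between quotient sets. For (\ref{cardclase1}) the plan is to introduce $\widehat{h}\colon M/h^*(\beta )\to N/\beta $ by $a/h^*(\beta )\mapsto h(a)/\beta $ and observe, from the defining equivalence $(a,a')\in h^*(\beta )\iff (h(a),h(a'))\in \beta $, that $\widehat{h}$ is well defined (reading it left to right) and injective (reading it right to left). This yields $|M/h^*(\beta )|\le |N/\beta |$ at once; reading it for $\beta \in {\rm Eq}_\kappa (N)$ places $h^*(\beta )$ in ${\rm Eq}_c(M)$ with $c=|M/h^*(\beta )|\le \kappa $, i.e. $h^*({\rm Eq}_\kappa (N))\subseteq \bigcup _{c\le \kappa }{\rm Eq}_c(M)$.

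For (\ref{cardclase0}), surjectivity of $h$ makes $\widehat{h}$ onto (every $n/\beta $ equals $h(a)/\beta $), so $|M/h^*(\beta )|=|N/\beta |$ and hence $h^*({\rm Eq}_\kappa (N))\subseteq {\rm Eq}_\kappa (M)$. Dually I would use $\overline{h}\colon M/\alpha \to N/h(\alpha )$, $a/\alpha \mapsto h(a)/h(\alpha )$, which is well defined in general and, invoking the preliminary fact $h({\rm Eq}(M))\subseteq {\rm Eq}(h(M))$ so that $h(\alpha )\in {\rm Eq}(N)$ when $h$ is onto, is surjective; thus $|M/\alpha |\ge |N/h(\alpha )|$ and $h({\rm Eq}_\kappa (M))\subseteq \bigcup _{c\le \kappa }{\rm Eq}_c(N)$. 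Part (\ref{cardclase2}) then follows by feeding $h^*(h(\alpha ))=\alpha $ (injectivity) and $h(h^*(\beta ))=\beta $ (surjectivity) back into these inequalities: the two inequalities sharpen to $|M/\alpha |=|N/h(\alpha )|$, and the four set-identities come out by reading the resulting cardinality equalities in both directions.

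Parts (\ref{cardclase3}) and (\ref{cardclase4}) are bookkeeping on top of (\ref{cardclase1}): since ${\rm Eq}_0(M)=\emptyset $ and ${\rm Eq}_1(M)=\{\nabla _M\}$, for $\beta \in {\rm Eq}_2(N)$ part (\ref{cardclase1}) forces $h^*(\beta )\in \{\nabla _M\}\cup {\rm Eq}_2(M)$, which is (\ref{cardclase3}); and under the hypothesis $(h^*)^{-1}(\{\nabla _M\})=\{\nabla _N\}$ the value $h^*(\beta )=\nabla _M$ is impossible for $\beta \in {\rm Eq}_2(N)$ (it would force $\beta =\nabla _N\notin {\rm Eq}_2(N)$), giving (\ref{cardclase4}). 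For (\ref{cardclase5}) I would first compute $h^*(\nabla _N)=\nabla _M$ directly and then, for surjective $h$, deduce from $h^*(\beta )=\nabla _M$ that $\beta =h(h^*(\beta ))=h(\nabla _M)=\nabla _N$; this establishes $(h^*)^{-1}(\{\nabla _M\})=\{\nabla _N\}$, whence $h^*({\rm Eq}_2(N))\subseteq {\rm Eq}_2(M)$ by (\ref{cardclase4}), and the inclusion ${\rm Eq}_2(N)\subseteq h({\rm Eq}_2(M))$ follows by taking $\alpha =h^*(\beta )\in {\rm Eq}_2(M)$ and using $h(h^*(\beta ))=\beta $.

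The delicate point, which I expect to be the main obstacle, is the reverse inclusion $h({\rm Eq}_2(M))\subseteq {\rm Eq}_2(N)$ asserted in (\ref{cardclase5}). Unlike the bijective case (\ref{cardclase2}), surjectivity alone does not preserve the number of classes: the direct image of a two-class equivalence can collapse. For instance, with $M=\{1,2,3\}$, $N=\{a,b\}$, $h(1)=a$, $h(2)=b$, $h(3)=a$ and $\alpha =eq(\{1\},\{2,3\})$ one has $\alpha \in {\rm Eq}_2(M)$ but $h(\alpha )=\nabla _N\notin {\rm Eq}_2(N)$. What (\ref{cardclase0}) actually yields here is only $h({\rm Eq}_2(M))\subseteq \{\nabla _N\}\cup {\rm Eq}_2(N)$, so the conclusion I can secure is the two-sided containment ${\rm Eq}_2(N)\subseteq h({\rm Eq}_2(M))\subseteq \{\nabla _N\}\cup {\rm Eq}_2(N)$; the stated equality holds precisely when no two-class equivalence of $M$ is merged by $h$ (in particular when $h$ is injective, hence bijective, as in (\ref{cardclase2})). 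I would therefore either add that non-merging hypothesis or record this containment in place of the equality.
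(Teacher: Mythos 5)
Your handling of (\ref{cardclase1})--(\ref{cardclase4}) and of the first two assertions in (\ref{cardclase5}) is essentially identical to the paper's own proof: the same well-defined injection $a/h^*(\beta )\mapsto h(a)/\beta $, the same surjection $a/\alpha \mapsto h(a)/h(\alpha )$, the same bookkeeping with ${\rm Eq}_0(M)=\emptyset $ and ${\rm Eq}_1(M)=\{\nabla _M\}$, and the same derivation of $(h^*)^{-1}(\{\nabla _M\})=\{\nabla _N\}$ and of ${\rm Eq}_2(N)=h(h^*({\rm Eq}_2(N)))\subseteq h({\rm Eq}_2(M))$ from surjectivity.

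Where you diverge, you are right and the paper is wrong. The paper's proof of (\ref{cardclase5}) asserts $\nabla _N\notin h({\rm Eq}_2(M))$, deducing this from the injectivity of $h^*$; that is a non sequitur, since injectivity of $h^*$ only controls inverse images (it gives: $h^*(\beta )=\nabla _M$ forces $\beta =\nabla _N$) and says nothing about direct images of equivalences that do not contain ${\rm Ker}(h)$. Your counterexample ($M=\{1,2,3\}$, $N=\{a,b\}$, $h(1)=h(3)=a$, $h(2)=b$, $\alpha =eq(\{1\},\{2,3\})$, $h(\alpha )=\nabla _N$) is valid and refutes both that step and the stated equality $h({\rm Eq}_2(M))={\rm Eq}_2(N)$; only the inclusion ${\rm Eq}_2(N)\subseteq h({\rm Eq}_2(M))$ survives, exactly as you conclude. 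This matters downstream, since the false clause is invoked again (in Lemma \ref{lat2cls}, Lemma \ref{gen01} and Proposition \ref{moreonadm}), and the same collapse occurs for congruences: the surjective bounded lattice morphism ${\cal L}_3\rightarrow {\cal L}_2$ sending $0$ and the middle element to $0$ and $1$ to $1$ maps the two-class congruence with classes $\{0\}$ and $\{m,1\}$ onto $\nabla _{{\cal L}_2}$. Two refinements to your salvage. First, your middle containment $h({\rm Eq}_2(M))\subseteq \{\nabla _N\}\cup {\rm Eq}_2(N)$ leans on the preliminary assertion $h({\rm Eq}(M))\subseteq {\rm Eq}(h(M))$, which also fails in general: direct images need not be transitive (take $M=\{1,2,3,4\}$, $\theta =eq(\{1,2\},\{3,4\})$, $h(1)=a$, $h(2)=h(3)=b$, $h(4)=c$ onto $N=\{a,b,c\}$; then $h(\theta )$ contains $(a,b)$ and $(b,c)$ but not $(a,c)$), so that containment should be read as applying only to those images which are equivalences. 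Second, your non-merging condition can be made exact: for surjective $h$ and $\theta \in {\rm Eq}_2(M)$ with classes $C_1,C_2$, one checks that $h(\theta )=h(C_1)^2\cup h(C_2)^2$ is a two-class equivalence iff $h(C_1)\cap h(C_2)=\emptyset $ iff ${\rm Ker}(h)\subseteq \theta $, and then $\theta \mapsto h(\theta )$ is a bijection from $\{\theta \in {\rm Eq}_2(M)\ |\ {\rm Ker}(h)\subseteq \theta \}$ onto ${\rm Eq}_2(N)$, which is the correct replacement for the last clause of (\ref{cardclase5}).
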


\begin{proof} (\ref{cardclase1}) Let $\varphi :M/h^*(\beta )\rightarrow N/\beta $, for all $a\in M$, $\varphi (a/h^*(\beta ))=h(a)/\beta $. For every $a,b\in M$, we have: $a/h^*(\beta )=b/h^*(\beta )$ iff $(a,b)\in h^*(\beta )$ iff $(h(a),h(b))\in \beta $ iff $h(a)/\beta =h(b)/\beta $ iff $\varphi (a/h^*(\beta ))=\varphi (b/h^*(\beta ))$; therefore $\varphi $ is well defined and injective, thus $|M/h^*(\beta )|\leq |N/\beta |$. Hence the inclusion in the enunciation.

\noindent (\ref{cardclase0}) If $h$ is surjective, then, clearly, the map $\varphi $ from (\ref{cardclase1}) is surjective, as well, hence $\varphi $ is bijective, therefore $|A/h^*(\beta )|=|B/\beta |$. Also, if $h$ is surjective, then $h(\alpha )\in {\rm Eq}(B)$. Let $\psi :A/\alpha \rightarrow B/h(\alpha )$, for all $a\in A$, $\psi (a/\alpha )=h(a)/h(\alpha )$. For all $x,y\in A$, we have the following: $x/\alpha =y/\alpha $ iff $(x,y)\in \alpha $, which implies $(h(x),h(y))\in h(\alpha )$, which means that $h(x)/h(\alpha )=h(y)/h(\alpha )$, that is $\psi (x/\alpha )=\psi (y/\alpha )$. So $\psi $ is well defined. Since $h$ is surjective, it clearly follows that $\psi $ is surjective, therefore $|A/\alpha |\geq |B/h(\alpha )|$. Hence the inclusions in the enunciation.

\noindent (\ref{cardclase2}) If $h$ is bijective, then $h$ is injective, so $h^*(h(\alpha ))=\alpha $, and $h$ is surjective, so $h(\alpha )\in {\rm Eq}(B)$ and $|A/\alpha |\geq |B/h(\alpha )|$ by (\ref{cardclase0}). Also, $|A/\alpha |=|A/h^*(h(\alpha ))|=|B/h(\alpha )|$, again by (\ref{cardclase0}), in which we take $\beta =h(\alpha )$. Thus $h^*({\rm Eq}_{\kappa }(N))\subseteq {\rm Eq}_{\kappa }(M)$ and $h({\rm Eq}_{\kappa }(M))\subseteq {\rm Eq}_{\kappa }(N)$, therefore, since $h^*(h(\phi ))=\phi $ and $h(h^*(\psi ))=\psi $ for all $\phi \in {\rm Eq}(M)$ and all $\psi \in {\rm Eq}(N)$, it follows that: ${\rm Eq}_{\kappa }(N)=h(h^*({\rm Eq}_{\kappa }(N)))\subseteq h({\rm Eq}_{\kappa }(M))\subseteq {\rm Eq}_{\kappa }(N)$, hence $h({\rm Eq}_{\kappa }(M))={\rm Eq}_{\kappa }(N)$, thus $h^*({\rm Eq}_{\kappa }(N))=h^*(h({\rm Eq}_{\kappa }(M)))={\rm Eq}_{\kappa }(M)$.

\noindent (\ref{cardclase3}) By (\ref{cardclase1}).

\noindent (\ref{cardclase4}) By (\ref{cardclase3}).

\noindent (\ref{cardclase5}) From (\ref{cardclase4}), (\ref{cardclase0}), the fact that $h^*(\nabla _N)=\nabla _M$, $\nabla _M\notin {\rm Eq}_2(M)$ and, if $h$ is surjective, then $h(\nabla _M)=\nabla _N$ and $h^*$ is injective, so $(h^*)^{-1}(\{\nabla _M\})=\{\nabla _N\}$ and $\nabla _N\notin h({\rm Eq}_2(M))$, we obtain: $h^*({\rm Eq}_2(N))\subseteq {\rm Eq}_2(M)$ and $h({\rm Eq}_2(M))\subseteq {\rm Eq}_2(N)$. Since $h$ is surjective, we have $h(h^*(\theta ))=\theta $ for all $\theta \in {\rm Eq}(M)$. Therefore ${\rm Eq}_2(N)=h(h^*({\rm Eq}_2(N)))\subseteq h({\rm Eq}_2(M))\subseteq {\rm Eq}_2(N)$, hence $h({\rm Eq}_2(M))={\rm Eq}_2(N)$.\end{proof}

\begin{remark} Let $L$ and $M$ be bounded lattices, $h:L\rightarrow M$ be a bounded lattice morphism and $\theta \in {\rm Con}(L)$. Then:\begin{itemize}
\item $\theta =\nabla _L$ iff $(0,1)\in \theta $, as shown by Remark \ref{convex};
\item $(h^*)^{-1}(\{\nabla _L\})=\{\nabla _M\}$, because, by the above, for any $\phi \in {\rm Con}(M)$, the following hold: $h^*(\phi )=\Delta _L$ iff $(0,1)\in h^*(\phi )$ iff $(h(0),h(1))\in \phi $ iff $(0,1)\in \phi $ iff $\phi =\nabla _M$.\end{itemize}\label{01}\end{remark}

\begin{lemma} If $L$ and $M$ are bounded lattices and $h:L\rightarrow M$ is a bounded lattice morphism, then:\begin{enumerate}
\item\label{morflat1} $h^*({\rm Con}_2(M))\subseteq {\rm Con}_2(L)$;
\item\label{morflat2} if ${\rm Max}(M)={\rm Con}_2(M)$, then $h$ is Max--admissible;
\item\label{morflat3} if ${\rm Spec}(M)={\rm Max}(M)={\rm Con}_2(M)$, then $h$ is admissible and Max--admissible;
\item\label{morflat4} if ${\rm Spec}(M)={\rm Max}(M)$ and $h$ is Max--admissible, then $h$ is admissible;
\item\label{morflat0} if ${\rm Spec}(L)={\rm Max}(L)$ and $h$ is admissible, then $h$ is Max--admissible;
\item\label{morflat5} if ${\rm Spec}(L)={\rm Max}(L)$ and ${\rm Spec}(M)={\rm Max}(M)$, then: $h$ is admissible iff $h$ is Max--admissible;
\item\label{morflat6} if $M$ can be obtained through finite direct products and/or finite ordinal sums from bounded distributive lattices and/or the pentagon, then $h$ is Max--admissible;
\item\label{morflat7} if $M$ is a bounded distributive lattice, then $h$ is admissible and Max--admissible;
\item\label{morflat8} if $h$ is Max--admissible and $M$ can be obtained through finite direct products and/or finite ordinal sums from bounded distributive lattices and/or finite modular lattices and/or relatively complemented lattices fulfilling the ACC, then $h$ is admissible;
\item\label{morflat9} if $h$ is admissible and $L$ can be obtained through finite direct products and/or finite ordinal sums from bounded distributive lattices and/or finite modular lattices and/or relatively complemented lattices fulfilling the ACC, then $h$ is Max--admissible;
\item\label{morflat10} if both $L$ and $M$ can be obtained through finite direct products and/or finite ordinal sums from bounded distributive lattices and/or finite modular lattices and/or relatively complemented lattices fulfilling the ACC then: $h$ is admissible iff $h$ is Max--admissible.\end{enumerate}\label{morflat}\end{lemma}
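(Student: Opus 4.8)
The plan is to reduce item (\ref{morflat10}) to the already established item (\ref{morflat5}), whose hypotheses are precisely the two spectral equalities ${\rm Spec}(L)={\rm Max}(L)$ and ${\rm Spec}(M)={\rm Max}(M)$. So the only real work is to produce these two equalities from the structural hypotheses on $L$ and $M$. To that end, I would first observe that the list of permitted building blocks in item (\ref{morflat10})---bounded distributive lattices, finite modular lattices, and relatively complemented lattices fulfilling the ACC---is a sublist of the building blocks appearing in Proposition \ref{spec2cls}, item (\ref{spec2cls2}), which additionally permits chains. Consequently, any lattice obtainable through finite direct products and/or finite ordinal sums from the blocks in (\ref{morflat10}) is \emph{a fortiori} obtainable from the (larger) collection of blocks in Proposition \ref{spec2cls}(\ref{spec2cls2}).

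Next I would apply Proposition \ref{spec2cls}(\ref{spec2cls2}) separately to $L$ and to $M$, obtaining ${\rm Spec}(L)={\rm Max}(L)$ and ${\rm Spec}(M)={\rm Max}(M)$. With both equalities in hand, item (\ref{morflat5}) yields directly that $h$ is admissible if and only if $h$ is Max--admissible, which is the desired conclusion.

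I expect no genuinely hard step here: the argument is essentially a bookkeeping combination of Proposition \ref{spec2cls}(\ref{spec2cls2}) with item (\ref{morflat5}). The only point requiring a modicum of care is matching the class of admissible constructions named in the hypothesis of (\ref{morflat10}) against the slightly broader class in Proposition \ref{spec2cls}(\ref{spec2cls2}); since dropping chains from the list of permitted blocks only shrinks the class of lattices covered, the applicability of the proposition to both $L$ and $M$ is preserved. Alternatively, one could sidestep item (\ref{morflat5}) altogether and argue the two implications independently, invoking item (\ref{morflat4}) for the direction ``Max--admissible implies admissible'' (using ${\rm Spec}(M)={\rm Max}(M)$) and item (\ref{morflat0}) for the converse (using ${\rm Spec}(L)={\rm Max}(L)$); both routes rest on the same two equalities furnished by Proposition \ref{spec2cls}(\ref{spec2cls2}).
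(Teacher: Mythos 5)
Your treatment of item (\ref{morflat10}) is exactly the paper's: the published proof of that item reads, in full, ``By (\ref{morflat5}) and Proposition \ref{spec2cls}, (\ref{spec2cls2})'', and your one point of care --- that the list of blocks in (\ref{morflat10}) omits chains, so the class of lattices covered only shrinks and Proposition \ref{spec2cls}, (\ref{spec2cls2}), still applies to both $L$ and $M$ --- is the correct (and only) observation needed. Your alternative route through (\ref{morflat4}) and (\ref{morflat0}) is equally faithful, since the paper's (\ref{morflat5}) is nothing but the conjunction of those two implications.

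The gap is one of coverage, not of logic: the statement is the whole eleven--item lemma, and your proposal establishes only (\ref{morflat10}), taking (\ref{morflat0}), (\ref{morflat4}) and (\ref{morflat5}) as ``already established''. Citing earlier items of the same lemma is a legitimate device in a sequential proof, but a blind proof must then actually supply them, and items (\ref{morflat1})--(\ref{morflat9}) are nowhere addressed. For the record, they are all short: (\ref{morflat1}) follows from Lemma \ref{cardclase}, (\ref{cardclase4}), once one checks that $(h^*)^{-1}(\{\nabla _L\})=\{\nabla _M\}$ for a bounded lattice morphism (Remark \ref{01}, using that a congruence of a bounded lattice is $\nabla $ iff it collapses $0$ and $1$); items (\ref{morflat2})--(\ref{morflat4}) and (\ref{morflat0}) all reduce to the chain ${\rm Con}_2\subseteq {\rm Max}\subseteq {\rm Spec}$, valid here because lattices form a congruence--distributive variety (Remark \ref{2max} and Lemma \ref{specdistrib}); and (\ref{morflat6})--(\ref{morflat9}) pair (\ref{morflat2}), (\ref{morflat3}), (\ref{morflat4}) and (\ref{morflat0}), respectively, with parts (\ref{spec2cls3}), (\ref{spec2cls1}) and (\ref{spec2cls2}) of Proposition \ref{spec2cls}. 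None of this requires a new idea, but as submitted your argument proves one item of the lemma, not the lemma.
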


\begin{proof} (\ref{morflat1}) By Lemma \ref{cardclase}, (\ref{cardclase4}), and Remark \ref{01}.

\noindent (\ref{morflat2})--(\ref{morflat4}) By the fact that ${\rm Con}_2(L)\subseteq {\rm Max}(L)\subseteq {\rm Spec}(L)$, according to Remark \ref{2max} and Lemma \ref{specdistrib}.

\noindent (\ref{morflat0}) By the fact that ${\rm Con}_2(M)\subseteq {\rm Max}(M)\subseteq {\rm Spec}(M)$, according to Remark \ref{2max} and Lemma \ref{specdistrib}.

\noindent (\ref{morflat5}) Clear.

\noindent (\ref{morflat6}) By (\ref{morflat2}) and Proposition \ref{spec2cls}, (\ref{spec2cls3}).

\noindent (\ref{morflat7}) By (\ref{morflat3}) and Proposition \ref{spec2cls}, (\ref{spec2cls1}).

\noindent (\ref{morflat8}) By (\ref{morflat4}) and Proposition \ref{spec2cls}, (\ref{spec2cls2}).

\noindent (\ref{morflat9}) By (\ref{morflat0}) and Proposition \ref{spec2cls}, (\ref{spec2cls2}).

\noindent (\ref{morflat10}) By (\ref{morflat5}) and Proposition \ref{spec2cls}, (\ref{spec2cls2}).\end{proof}

\begin{proposition} In the class of bounded distributive lattices, all morphisms are admissible and Max--admissible.\label{admdistrib}\end{proposition}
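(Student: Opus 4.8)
The plan is to read this off immediately from Lemma \ref{morflat}, part (\ref{morflat7}). First I would record the standing hypothesis needed for admissibility to make sense: the class of bounded distributive lattices is a variety of lattices, hence congruence--distributive (and so congruence--modular), so the notions of admissible and Max--admissible morphism are meaningful here and the commutator equals the intersection of congruences. With that in place, let $f\colon A\to B$ be an arbitrary morphism in this class. Then $B$ is by hypothesis a bounded distributive lattice and $f$ is a bounded lattice morphism, which is precisely the situation covered by Lemma \ref{morflat}, (\ref{morflat7}); that part yields at once that $f$ is both admissible and Max--admissible. Since $f$ was arbitrary, every morphism in the class is admissible and Max--admissible.

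If one prefers to bypass the packaged statement and see the mechanism, I would unfold it as follows. By Corollary \ref{mysuper} (equivalently Proposition \ref{spec2cls}, (\ref{spec2cls1})), every bounded distributive lattice $M$ satisfies ${\rm Spec}(M)={\rm Max}(M)={\rm Con}_2(M)$. Lemma \ref{morflat}, (\ref{morflat1}), gives $f^*({\rm Con}_2(B))\subseteq {\rm Con}_2(A)$, so the inverse image of any maximal (equivalently, two--class) congruence of $B$ is a two--class congruence of $A$. Finally, the chain ${\rm Con}_2(A)\subseteq {\rm Max}(A)\subseteq {\rm Spec}(A)$, furnished by Remark \ref{2max} and Lemma \ref{specdistrib}, shows that such an inverse image is simultaneously maximal and prime in $A$, establishing both forms of admissibility directly.

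There is essentially no obstacle here: the proposition is a direct corollary of the machinery already assembled in Section \ref{throughmorph}, and the proof is a one--line appeal to Lemma \ref{morflat}, (\ref{morflat7}). The only point meriting a moment's attention is the remark that admissibility and Max--admissibility are defined only relative to a congruence--modular class, a requirement that is automatically satisfied since every variety of lattices is congruence--distributive.
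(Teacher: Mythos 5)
Your proof is correct and matches the paper's, which is exactly the one--line appeal to Lemma \ref{morflat}, (\ref{morflat7}); your unfolded version via Corollary \ref{mysuper}, Lemma \ref{morflat}, (\ref{morflat1}), Remark \ref{2max} and Lemma \ref{specdistrib} simply retraces how that lemma is itself established. Nothing is missing.
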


\begin{proof} By Lemma \ref{morflat}, (\ref{morflat7}).\end{proof}

\begin{lemma} If $L$ and $M$ are bounded lattices and $h:L\rightarrow M$ be a bounded lattice morphism. If $h(L)=\{0,1\}$, then $h^*({\rm Con}(M)\setminus \{\nabla _M\})\subseteq {\rm Con}_2(L)$.\label{con2im01}\end{lemma}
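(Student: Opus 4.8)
The plan is to show that, for every proper congruence $\psi $ of $M$, the inverse image $h^*(\psi )$ coincides with ${\rm Ker}(h)$, and that this single congruence has exactly two classes, whence $h^*(\psi )\in {\rm Con}_2(L)$. First I would extract the crucial consequence of properness: since $\psi \neq \nabla _M$ and $M$ is a bounded lattice, Remark \ref{01} gives $(0,1)\notin \psi $. This is the one place where the hypothesis $\psi \neq \nabla _M$ enters, and it is what prevents any off--diagonal pair from surviving the pullback.

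Next I would prove the equality $h^*(\psi )={\rm Ker}(h)$. The inclusion ${\rm Ker}(h)=h^*(\Delta _M)\subseteq h^*(\psi )$ is immediate from $\Delta _M\subseteq \psi $, since preimages respect inclusion. For the reverse inclusion, let $(a,b)\in h^*(\psi )$, so that $(h(a),h(b))\in \psi $; because $h(L)=\{0,1\}$, the pair $(h(a),h(b))$ lies in $\{0,1\}^2$, whose only off--diagonal members are $(0,1)$ and $(1,0)$, both excluded since $(0,1)\notin \psi $ and $\psi $ is symmetric. Hence $h(a)=h(b)$, that is $(a,b)\in {\rm Ker}(h)$. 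Thus $h^*(\psi )={\rm Ker}(h)$, and in particular $h^*(\psi )\in {\rm Con}(L)$, as already noted in the preliminaries.

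Finally I would count the classes of ${\rm Ker}(h)=h^*(\psi )$: they are precisely the fibres $h^{-1}(0)$ and $h^{-1}(1)$, which are non--empty and distinct because $h$ is a bounded lattice morphism with $h(L)=\{0,1\}$, so that $h(0)=0$ and $h(1)=1$. Therefore $|L/h^*(\psi )|=2$, i.e.\ $h^*(\psi )\in {\rm Con}_2(L)$. Since $\psi $ was an arbitrary element of ${\rm Con}(M)\setminus \{\nabla _M\}$, this yields $h^*({\rm Con}(M)\setminus \{\nabla _M\})\subseteq {\rm Con}_2(L)$. There is no genuine obstacle here; the only point requiring attention is the realisation that the hypothesis $h(L)=\{0,1\}$ forces \emph{every} proper congruence of $M$ to pull back to the same congruence ${\rm Ker}(h)$, after which the verification is routine.
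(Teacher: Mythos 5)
Your proof is correct and follows essentially the same route as the paper's: both arguments use Remark \ref{01} to exclude the pair $(0,1)$ from a proper congruence and the hypothesis $h(L)=\{0,1\}$ to collapse $L$ into the two classes of $0$ and $1$. The only cosmetic difference is that you package the computation as the identity $h^*(\psi )={\rm Ker}(h)$ (a slightly sharper intermediate observation, with the two classes realised as the fibres $h^{-1}(0)$ and $h^{-1}(1)$), whereas the paper directly verifies that $0/h^*(\phi )$ and $1/h^*(\phi )$ are distinct and exhaust $L/h^*(\phi )$.
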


\begin{proof} Let $\phi \in {\rm Con}(M)\setminus \{\nabla _M\}$, so that, by Remark \ref{01}, $h^*(\phi )\neq \nabla _L$, thus $(0,1)\notin h^*(\phi )$, which means that $0/h^*(\phi )\neq 1/h^*(\phi )$. Let $a\in L$. Then $h(a)=0$ or $h(a)=1$. If $h(a)=0=h(0)$, then $(h(a),h(0))\in \Delta _M\subseteq \phi $, thus $(a,0)\in h^*(\phi )$, that is $a/h^*(\phi )=0/h^*(\phi )$. Analogously, if $h(a)=1$, then $a/h^*(\phi )=1/h^*(\phi )$. So $L/h^*(\phi )=\{0/h^*(\phi ),1/h^*(\phi )\}$, with $0/h^*(\phi )\neq 1/h^*(\phi )$, therefore $h^*(\phi )\in {\rm Con}_2(L)$.\end{proof}

\begin{proposition} Any bounded lattice morphism whose image is $\{0,1\}$ is admissible and Max--admissible.\label{im01}\end{proposition}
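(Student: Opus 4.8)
The plan is to reduce everything to Lemma \ref{con2im01}, which under the standing hypothesis $h(L)=\{0,1\}$ already does the essential work: it asserts that every non-full congruence of $M$ is pulled back to a two-class congruence of $L$, i.e. $h^*({\rm Con}(M)\setminus\{\nabla_M\})\subseteq {\rm Con}_2(L)$. Once this is in hand, the whole statement follows from locating where two-class congruences sit inside ${\rm Con}(L)$. First I would record the inclusion chain ${\rm Con}_2(L)\subseteq {\rm Max}(L)\subseteq {\rm Spec}(L)$: the left inclusion is Remark \ref{2max}, and the right inclusion follows from Lemma \ref{specdistrib}, using that the variety of (bounded) lattices is congruence-distributive.

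For Max-admissibility I would take an arbitrary $\psi\in {\rm Max}(M)$. Since maximal congruences are by definition proper, $\psi\in {\rm Con}(M)\setminus\{\nabla_M\}$, so Lemma \ref{con2im01} gives $h^*(\psi)\in {\rm Con}_2(L)\subseteq {\rm Max}(L)$, which is precisely what Max-admissibility requires. For admissibility I would argue symmetrically with an arbitrary $\psi\in {\rm Spec}(M)$: prime congruences are also proper by definition, hence $\psi\in {\rm Con}(M)\setminus\{\nabla_M\}$ and Lemma \ref{con2im01} yields $h^*(\psi)\in {\rm Con}_2(L)$; the inclusion chain from the first paragraph then places $h^*(\psi)$ inside ${\rm Spec}(L)$. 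Thus $h$ is both admissible and Max-admissible.

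I expect no genuine obstacle, since the real content has already been isolated in Lemma \ref{con2im01}. The two points deserving a line of care are purely bookkeeping: one must note that both prime and maximal congruences are \emph{proper}, so that they indeed belong to the set ${\rm Con}(M)\setminus\{\nabla_M\}$ on which the lemma operates, and one must invoke congruence-distributivity (via Lemma \ref{specdistrib}) to justify the passage ${\rm Max}(L)\subseteq {\rm Spec}(L)$ that powers the admissibility half. The proof is then a direct one-line deduction in each case.
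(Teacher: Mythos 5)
Your proof is correct and follows essentially the same route as the paper: Lemma \ref{con2im01} pulls prime and maximal congruences (which are proper by definition) back into ${\rm Con}_2(L)$, and the chain ${\rm Con}_2(L)\subseteq {\rm Max}(L)\subseteq {\rm Spec}(L)$ finishes both claims. The only cosmetic difference is that you justify ${\rm Max}(L)\subseteq {\rm Spec}(L)$ via Lemma \ref{specdistrib} (congruence--distributivity), while the paper cites Remark \ref{rfolclor} (semi--degeneracy plus Lemma \ref{folclor}); both justifications are valid here and the paper itself uses your chain in the proof of Lemma \ref{morflat}.
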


\begin{proof} Let $L$ and $M$ be bounded lattices and $h:L\rightarrow M$ be a bounded lattice morphism with $h(L)=\{0,1\}$. By Lemma \ref{con2im01} and Remarks \ref{2max} and \ref{rfolclor}, if $\phi \in {\rm Spec}(M)\subseteq {\rm Con}(M)\setminus \{\nabla _M\}$ or $\phi \in {\rm Max}(M)\subseteq {\rm Con}(M)\setminus \{\nabla _M\}$, then $h^*(\phi )\in {\rm Con}_2(L)\subseteq {\rm Max}(L)\subseteq {\rm Spec}(L)$, thus $h$ is admissible and Max--admissible.\end{proof}

\begin{remark} The statements in Lemma \ref{con2im01} and Proposition \ref{im01} hold for any morphism between bounded orderred structures of the same type.\end{remark}

\begin{lemma} If $L$ and $M$ are bounded lattices, $M$ is non--trivial and $h:L\rightarrow M$ is a bounded lattice morphism, then $L$ is non--trivial, $h^*({\rm Con}_2(M))\subseteq {\rm Con}_2(L)$ and, if $h$ is surjective, then $h({\rm Con}_2(L))={\rm Con}_2(M)$.\label{lat2cls}\end{lemma}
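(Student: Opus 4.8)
The plan is to dispatch the three assertions in turn, leaning almost entirely on results already established in this section.

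For the non-triviality of $L$: a bounded lattice morphism preserves the bounds, so $h(0_L)=0_M$ and $h(1_L)=1_M$. Were $L$ trivial we would have $0_L=1_L$, forcing $0_M=h(0_L)=h(1_L)=1_M$ and contradicting the hypothesis that $M$ is non--trivial; hence $L$ is non--trivial. The inclusion $h^*({\rm Con}_2(M))\subseteq {\rm Con}_2(L)$ is precisely Lemma \ref{morflat}, (\ref{morflat1}), so I would simply invoke it; its content is that the inverse image of a two--class congruence remains two--class, which at bottom rests on Lemma \ref{cardclase}, (\ref{cardclase4}), together with the fact from Remark \ref{01} that $(h^*)^{-1}(\{\nabla_M\})=\{\nabla_L\}$ for bounded lattice morphisms.

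For the surjective case $h({\rm Con}_2(L))={\rm Con}_2(M)$ I would establish the two inclusions separately. To prove $h({\rm Con}_2(L))\subseteq {\rm Con}_2(M)$, take $\phi\in {\rm Con}_2(L)\subseteq {\rm Eq}_2(L)$: surjectivity gives $h(\phi)\in {\rm Con}(M)$ (recorded in the preliminaries), while Lemma \ref{cardclase}, (\ref{cardclase5}), applied to the underlying sets yields $h({\rm Eq}_2(L))={\rm Eq}_2(M)$, so $h(\phi)\in {\rm Eq}_2(M)$; intersecting, $h(\phi)\in {\rm Con}(M)\cap {\rm Eq}_2(M)={\rm Con}_2(M)$. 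To prove ${\rm Con}_2(M)\subseteq h({\rm Con}_2(L))$, take $\psi\in {\rm Con}_2(M)$: by the second assertion $h^*(\psi)\in {\rm Con}_2(L)$, and surjectivity gives $h(h^*(\psi))=\psi$, so $\psi\in h({\rm Con}_2(L))$. Combining the two inclusions yields the desired equality.

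No genuine obstacle arises, since every ingredient is already in place. The only point warranting mild care is that Lemma \ref{cardclase} delivers merely the set--level identity $h({\rm Eq}_2(L))={\rm Eq}_2(M)$, so in order to land inside ${\rm Con}_2(M)$ rather than only ${\rm Eq}_2(M)$ I must combine it with the separate observation that the direct image of a congruence under a \emph{surjective} morphism is again a congruence.
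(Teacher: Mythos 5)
Your proposal is correct and follows essentially the same route as the paper: non--triviality from $h(0)=0\neq 1=h(1)$, the inclusion $h^*({\rm Con}_2(M))\subseteq {\rm Con}_2(L)$ by Lemma \ref{morflat}, (\ref{morflat1}), and the surjective equality via Lemma \ref{cardclase}. The only difference is that you spell out the passage from the set--level identity $h({\rm Eq}_2(L))={\rm Eq}_2(M)$ to the congruence--level one (using that direct images of congruences under surjective morphisms are congruences, and that $h(h^*(\psi))=\psi$), a transfer the paper leaves implicit by its remark at the start of Section \ref{throughmorph} --- a reasonable bit of extra care, not a different proof.
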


\begin{proof} Since $h(0)=0\neq 1=h(1)$, it follows that $0\neq 1$ in $L$, thus $L$ is non--trivial. By Lemma \ref{morflat}, (\ref{morflat1}), we have $h^*({\rm Con}_2(M))\subseteq {\rm Con}_2(L)$. By Lemma \ref{cardclase}, (\ref{cardclase4}), if $h$ is surjective, then $h({\rm Con}_2(L))={\rm Con}_2(M)$.\end{proof}

Remark \ref{01} and Lemma \ref{lat2cls} can be generalized:

\begin{lemma} If ${\cal C}$ is semi--degenerate, then:\begin{itemize}
\item $B$ is non--trivial, then $A$ is non--trivial;
\item $(f^*)^{-1}(\{\nabla _A\})=\{\nabla _B\}$ and $f^*({\rm Con}_2(B))\subseteq {\rm Con}_2(A)$; 
\item if $f$ is surjective, then $f({\rm Con}_2(A))={\rm Con}_2(B)$.\end{itemize}\label{gen01}\end{lemma}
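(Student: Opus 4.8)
The plan is to lift the bounded--lattice arguments of Remark~\ref{01} and Lemma~\ref{lat2cls} to an arbitrary semi--degenerate class by replacing the special role the pair $(0,1)$ plays there with the following elementary fact, which is the engine of the whole proof: in a semi--degenerate class a non--trivial algebra admits no trivial (one--element) subalgebra, while the homomorphic image of an algebra is always a subalgebra of the codomain. I would treat the three bullets in the order stated, each reducing to this fact together with the cardinality estimates already recorded in Lemma~\ref{cardclase}.

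First I would handle the contrapositive of the first bullet: if $A$ were trivial, then $f(A)$ would be a one--element subalgebra of $B$, so $B$ would be a non--trivial algebra of ${\cal C}$ carrying a trivial subalgebra, contradicting semi--degeneracy; hence $B$ non--trivial forces $A$ non--trivial.

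The heart of the matter is the identity $(f^*)^{-1}(\{\nabla_A\})=\{\nabla_B\}$ of the second bullet, which I would read over congruences. One inclusion is immediate, since $f^*(\nabla_B)=\nabla_A$. For the other, suppose $\psi\in{\rm Con}(B)$ satisfies $f^*(\psi)=\nabla_A$. Because ${\rm Ker}(p_\psi\circ f)=f^*(\psi)$, the homomorphic image $(p_\psi\circ f)(A)$ is a subalgebra of $B/\psi$ of cardinality $|A/f^*(\psi)|=|A/\nabla_A|=1$, i.e.\ a trivial subalgebra; since $B/\psi\in{\cal C}$ (an equational class is closed under quotients), semi--degeneracy forces $B/\psi$ to be trivial, that is $\psi=\nabla_B$. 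Granting this, $f^*({\rm Con}_2(B))\subseteq{\rm Con}_2(A)$ follows at once: for $\psi\in{\rm Con}_2(B)$ one has $f^*(\psi)\in{\rm Con}(A)$, and Lemma~\ref{cardclase},~(\ref{cardclase3}), places $f^*(\psi)$ in $\{\nabla_A\}\cup{\rm Eq}_2(A)$; as $\psi\neq\nabla_B$, the identity just proved excludes the value $\nabla_A$, leaving $f^*(\psi)\in{\rm Con}(A)\cap{\rm Eq}_2(A)={\rm Con}_2(A)$.

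For the third bullet I would invoke surjectivity of $f$, which guarantees $f(\phi)\in{\rm Con}(B)$ for every $\phi\in{\rm Con}(A)$ and, by Lemma~\ref{cardclase},~(\ref{cardclase5}), that $f({\rm Eq}_2(A))={\rm Eq}_2(B)$. Intersecting these two facts yields $f({\rm Con}_2(A))\subseteq{\rm Con}_2(B)$, while for the reverse inclusion I would, given $\psi\in{\rm Con}_2(B)$, take $\phi=f^*(\psi)$, which lies in ${\rm Con}_2(A)$ by the second bullet and satisfies $f(\phi)=f(f^*(\psi))=\psi$ precisely because $f$ is surjective. I expect the only genuine obstacle to be the non--trivial direction of $(f^*)^{-1}(\{\nabla_A\})=\{\nabla_B\}$: this is exactly where semi--degeneracy is indispensable (the identity fails over arbitrary equivalences, and for classes that are not semi--degenerate), and some care is needed to apply it at the level of congruences only, i.e.\ to feed only congruences into Lemma~\ref{cardclase},~(\ref{cardclase3}), rather than all of ${\rm Eq}(B)$.
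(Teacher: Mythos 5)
Your proposal is correct and takes essentially the same route as the paper: the paper likewise obtains $(f^*)^{-1}(\{\nabla _A\})=\{\nabla _B\}$ by embedding $A/f^*(\beta )$ into $B/\beta $ via the injection $\varphi $ from the proof of Lemma \ref{cardclase}, (\ref{cardclase1}) (noting it is a morphism in ${\cal C}$) and invoking semi--degeneracy, handles the first bullet by the same observation that $f(A)$ is a subalgebra of $B$, and then cites Lemma \ref{cardclase}, (\ref{cardclase4}) and (\ref{cardclase5}), for the two ${\rm Con}_2$ statements. Your formulation of the key step through $p_{\psi }\circ f$, its kernel $f^*(\psi )$ and the first isomorphism theorem is just another phrasing of the paper's embedding $\varphi $, and your direct verification of the reverse inclusion in the third bullet (taking $\phi =f^*(\psi )$ and using $f(f^*(\psi ))=\psi $) is only a minor self--contained variant of the paper's appeal to (\ref{cardclase5}).
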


\begin{proof} Let $\beta \in {\rm Con}(B)$ and let us define $\varphi :A/f^*(\beta )\rightarrow B/\beta $, for all $a\in M$, $\varphi (a/f^*(\beta ))=h(a)/\beta $. From the proof of Lemma \ref{cardclase}, (\ref{cardclase1}), we get that $\varphi $ is an embedding in ${\cal C}$. Since ${\cal C}$ is semi--degenerate and $A/f^*(\beta )$ is embedded in $B/\beta $, the following equivalences hold: $f^*(\beta )=\nabla _A$ iff $A/f^*(\beta )$ is the trivial algebra iff $B/\beta $ is the trivial algebra iff $\beta =\nabla _B$, therefore $(f^*)^{-1}(\{\nabla _A\})=\{\nabla _B\}$, hence $f^*({\rm Con}_2(B))\subseteq {\rm Con}_2(A)$ by Lemma \ref{cardclase}, (\ref{cardclase4}), and, if $f$ is surjective, then $f({\rm Con}_2(A))={\rm Con}_2(B)$ by Lemma \ref{cardclase}, (\ref{cardclase5}). Since ${\cal C}$ is semi--degenerate and $f(A)$ is embedded in $B$, if $B$ is non--trivial, then $f(A)$ is non--trivial, hence $A$ is non--trivial.\end{proof}

Now let us generalize the statements in Lemma \ref{morflat}.

\begin{corollary}\begin{enumerate} 
\item\label{cardadm1} If ${\rm Max}(B)={\rm Con}_2(B)$ and $(f^*)^{-1}(\{\nabla _A\})=\{\nabla _B\}$, then $f$ is Max--admissible.
\item\label{cardadm2} If ${\cal C}$ is congruence--modular, ${\rm Spec}(B)={\rm Max}(B)={\rm Con}_2(B)$, $(f^*)^{-1}(\{\nabla _A\})=\{\nabla _B\}$ and ${\rm Max}(A)\subseteq {\rm Spec}(A)$, then $f$ is admissible and Max--admissible.
\item\label{cardadm3} If ${\cal C}$ is congruence--modular, ${\rm Spec}(B)={\rm Con}_2(B)$, $(f^*)^{-1}(\{\nabla _A\})=\{\nabla _B\}$ and $\nabla _A$ is finitely generated, then $f$ is admissible and Max--admissible.
\item\label{cardadm4} If ${\cal C}$ is semi--degenerate and ${\rm Max}(B)={\rm Con}_2(B)$, then $f$ is Max--admissible.
\item\label{cardadm5} If ${\cal C}$ is semi--degenerate and congruence--modular and ${\rm Spec}(B)={\rm Max}(B)={\rm Con}_2(B)$, then $f$ is admissible and Max--admissible.\end{enumerate}\label{cardadm}\end{corollary}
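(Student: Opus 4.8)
The common mechanism behind all five parts is that \emph{two--class congruences pull back to two--class congruences}. Indeed, whenever $(f^*)^{-1}(\{\nabla _A\})=\{\nabla _B\}$, Lemma \ref{cardclase}, (\ref{cardclase4}), applied to the underlying map of $f$ gives $f^*({\rm Con}_2(B))\subseteq {\rm Con}_2(A)$; and by Remark \ref{2max} we always have ${\rm Con}_2(A)\subseteq {\rm Max}(A)$, while Lemma \ref{specdistrib}, respectively Lemma \ref{folclor}, supplies under suitable hypotheses the further inclusion ${\rm Max}(A)\subseteq {\rm Spec}(A)$. The plan is to prove (\ref{cardadm1}) directly and then feed it, together with these inclusions, into the remaining parts. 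For (\ref{cardadm1}): take $\psi \in {\rm Max}(B)={\rm Con}_2(B)$; the hypothesis on $f^*$ lets me invoke Lemma \ref{cardclase}, (\ref{cardclase4}), so $f^*(\psi )\in {\rm Con}_2(A)\subseteq {\rm Max}(A)$ by Remark \ref{2max}, whence $f$ is Max--admissible.

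For (\ref{cardadm2}), Max--admissibility is exactly (\ref{cardadm1}), since both ${\rm Max}(B)={\rm Con}_2(B)$ and the $f^*$--hypothesis hold. For admissibility I would take $\psi \in {\rm Spec}(B)={\rm Con}_2(B)$ and run the same pull--back to land in ${\rm Con}_2(A)\subseteq {\rm Max}(A)\subseteq {\rm Spec}(A)$, the last inclusion being the standing hypothesis ${\rm Max}(A)\subseteq {\rm Spec}(A)$. Part (\ref{cardadm3}) repeats this admissibility argument verbatim, except that ${\rm Max}(A)\subseteq {\rm Spec}(A)$ is no longer assumed but derived: since ${\cal C}$ is congruence--modular and $\nabla _A$ is finitely generated, Lemma \ref{folclor} gives that every maximal congruence of $A$ is prime. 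Thus $\psi \in {\rm Spec}(B)={\rm Con}_2(B)$ yields $f^*(\psi )\in {\rm Con}_2(A)\subseteq {\rm Max}(A)\subseteq {\rm Spec}(A)$, so $f$ is admissible.

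Parts (\ref{cardadm4}) and (\ref{cardadm5}) are then obtained by letting semi--degeneracy manufacture the two running hypotheses, on $f^*$ and on $\nabla _A$. By Lemma \ref{gen01}, semi--degeneracy of ${\cal C}$ already forces $(f^*)^{-1}(\{\nabla _A\})=\{\nabla _B\}$; hence (\ref{cardadm4}) is an immediate instance of (\ref{cardadm1}). For (\ref{cardadm5}), Proposition \ref{2.6} additionally gives that $\nabla _A$ (indeed $\nabla _M$ for every member $M$ of ${\cal C}$) is finitely generated, so that ${\rm Max}(A)\subseteq {\rm Spec}(A)$ follows from Lemma \ref{folclor}; all hypotheses of (\ref{cardadm2}) are thereby met, and $f$ is admissible and Max--admissible.

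The step I expect to be the main obstacle is the Max--admissibility claim inside (\ref{cardadm3}). There the only two--class/maximal information about $B$ is ${\rm Spec}(B)={\rm Con}_2(B)$, and to reduce to (\ref{cardadm1}) I must upgrade this to ${\rm Max}(B)={\rm Con}_2(B)$. Since ${\rm Con}_2(B)={\rm Spec}(B)\subseteq {\rm Max}(B)$ by Remark \ref{2max}, what is needed is the reverse inclusion ${\rm Max}(B)\subseteq {\rm Spec}(B)$, i.e.\ that every maximal congruence of $B$ is prime and hence two--class. This is exactly Lemma \ref{folclor} applied to $B$, which requires $\nabla _B$ --- not merely $\nabla _A$ --- to be finitely generated; a maximal congruence $\psi $ with $|B/\psi |\geq 3$ need not pull back to a maximal congruence, and the embedding $A/f^*(\psi )\hookrightarrow B/\psi $ extracted from the proof of Lemma \ref{cardclase} does not transport simplicity downward. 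For the prime members $\psi \in {\rm Spec}(B)={\rm Con}_2(B)$ the argument already gives $f^*(\psi )\in {\rm Con}_2(A)\subseteq {\rm Max}(A)$, so the difficulty is confined to maximal, non--prime congruences of $B$; justifying that none exist (equivalently, that ${\rm Max}(B)={\rm Con}_2(B)$) is the crux, and is precisely the input that semi--degeneracy makes automatic in (\ref{cardadm4}) and (\ref{cardadm5}).
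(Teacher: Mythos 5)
Your handling of parts (\ref{cardadm1}), (\ref{cardadm2}), (\ref{cardadm4}) and (\ref{cardadm5}), and of the admissibility half of (\ref{cardadm3}), is correct and is essentially the paper's own proof: Lemma \ref{cardclase}, (\ref{cardclase4}), combined with Remark \ref{2max} for (\ref{cardadm1}) and (\ref{cardadm2}); Lemma \ref{gen01} to discharge the hypothesis $(f^*)^{-1}(\{\nabla _A\})=\{\nabla _B\}$ in the semi--degenerate cases; and Proposition \ref{2.6} (Remark \ref{rfolclor}) with Lemma \ref{folclor} to obtain ${\rm Max}(A)\subseteq {\rm Spec}(A)$. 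Your derivation of (\ref{cardadm5}) directly from (\ref{cardadm2}) rather than from (\ref{cardadm3}), as the paper does, is a cosmetic difference --- and in fact a fortunate one, as explained next.

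The obstacle you flag in (\ref{cardadm3}) is genuine, and it is not a defect of your proposal but of the printed proof: the paper justifies (\ref{cardadm3}) by ``(\ref{cardadm2}) and Lemma \ref{folclor}'', yet to instantiate (\ref{cardadm2}) one needs ${\rm Max}(B)={\rm Con}_2(B)$, i.e.\ (given Remark \ref{2max}) the inclusion ${\rm Max}(B)\subseteq {\rm Spec}(B)$, and Lemma \ref{folclor} delivers this only when $\nabla _B$ is finitely generated, whereas (\ref{cardadm3}) assumes finite generation for $\nabla _A$ alone. Nothing else in the paper's toolkit closes this hole under the stated hypotheses: Lemma \ref{specdistrib} would give ${\rm Max}(B)\subseteq {\rm Spec}(B)$ but needs congruence--distributivity, and Lemma \ref{gen01} and Remark \ref{rfolclor} need semi--degeneracy, none of which is assumed in (\ref{cardadm3}). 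Concretely, if $\mu \in {\rm Max}(B)\setminus {\rm Spec}(B)$, then (using Proposition \ref{1.3}) $[\nabla _B,\nabla _B]_B\subseteq \mu $, and $\mu \notin {\rm Con}_2(B)$ since ${\rm Con}_2(B)={\rm Spec}(B)$; the two--class pull--back mechanism of Lemma \ref{cardclase}, (\ref{cardclase4}), then says nothing about $f^*(\mu )$, and, as you observe, the embedding $A/f^*(\mu )\rightarrow B/\mu $ does not transport simplicity downward. So the Max--admissibility claim in (\ref{cardadm3}) is established only modulo the extra hypothesis that $\nabla _B$ be finitely generated (or that ${\rm Max}(B)\subseteq {\rm Spec}(B)$ hold for some other reason), in which case your reduction to (\ref{cardadm1}) goes through verbatim. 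The only later use the paper makes of (\ref{cardadm3}) is in proving (\ref{cardadm5}), where semi--degeneracy restores finite generation of $\nabla _B$ via Proposition \ref{2.6}, so nothing downstream is damaged; your route to (\ref{cardadm5}) through (\ref{cardadm2}) avoids the problematic step altogether.
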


\begin{proof} (\ref{cardadm1}) and (\ref{cardadm2}): By Lemma \ref{cardclase}, (\ref{cardclase4}), and Remark \ref{2max}.

\noindent (\ref{cardadm3}) By (\ref{cardadm2}) and Lemma \ref{folclor}.

\noindent (\ref{cardadm4}) By (\ref{cardadm1}) and Lemma \ref{gen01}.

\noindent (\ref{cardadm5}) By (\ref{cardadm3}), Remark \ref{rfolclor} and Lemma \ref{gen01}.\end{proof}

\begin{remark} Assume that ${\cal C}$ is congruence--modular and ${\rm Spec}(A)={\rm Max}(A)$, and let $M$ and $N$ be members of ${\cal C}$, $g:M\rightarrow A$ be a Max--admissible morphism and $h:A\rightarrow N$ an admissible morphism in ${\cal C}$. Then, clearly:\begin{itemize}
\item\label{specmax1} if ${\rm Max}(M)\subseteq {\rm Spec}(M)$, then $g$ is admissible;
\item\label{specmax2} if ${\rm Max}(N)\subseteq {\rm Spec}(N)$, then $h$ is Max--admissible;
\item\label{specmax3} thus, by Remark \ref{rfolclor}, if ${\cal C}$ is semi--degenerate, then $g$ is admissible and $h$ is Max--admissible.\end{itemize}\label{specmax}\end{remark}

\section{More Results on Admissibility and Max--admissibility}
\label{moreon(max)adm}

In this section, we prove that surjectivity implies admissibility and Max--admissibility, but the converse is not true, that the study of admissibility and Max--admissibility can be reduced to canonical embeddings, that admissibility and Max--admissibility are preserved by quotients, and several other results.

\begin{remark} If $f^*:{\rm Con}(B)\rightarrow {\rm Con}(A)$ is a bounded lattice isomorphism, then:\begin{itemize}
\item clearly, $f^*({\rm Max}(B))={\rm Max}(A)$; in particular, $f$ is Max--admissible;
\item by Remark \ref{primdistrib}, if ${\cal C}$ is congruence--distributive, or ${\cal C}$ is congruence--modular and the commutator in $A$ and $B$ equals the intersection of congruences, then $f^*({\rm Spec}(B))={\rm Spec}(A)$; in particular, $f$ is admissible.\end{itemize}\label{isomlatcongr}\end{remark}

\begin{proposition}\begin{enumerate}
\item\label{admmaxadm1} If ${\cal C}$ is congruence--modular, then any surjective morphism in ${\cal C}$ is admissible, but the converse is not true.
\item\label{admmaxadm2} Any surjective morphism is Max--admissible, but the converse is not true.\end{enumerate}\label{admmaxadm}\end{proposition}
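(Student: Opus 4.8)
The plan is to establish the two positive implications by transporting congruences of $B$ back along $f^*$, and to settle both ``converse'' claims with a single witness taken from Example~\ref{fiectip}. Throughout I write $\kappa ={\rm Ker}(f)$ and lean on the surjectivity facts recalled in Section~\ref{preliminaries}: when $f$ is surjective, $f^*$ is injective and satisfies $f(f^*(Y))=Y$, so $f^*$ reflects inclusions; moreover $f^*(\nabla _B)=\nabla _A$, and $f^*(f(\gamma ))=\gamma $ for every $\gamma \supseteq \kappa $ (the standard correspondence, since $f$ factors as an isomorphism followed by the canonical surjection onto $A/\kappa $).

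For Max--admissibility, which requires no congruence--modularity, I would fix $\psi \in {\rm Max}(B)$ and exhibit the algebra isomorphism $A/f^*(\psi )\cong B/\psi $ given by $a/f^*(\psi )\mapsto f(a)/\psi $. This map is well defined and injective by the computation in the proof of Lemma~\ref{cardclase},~(\ref{cardclase1}), surjective because $f$ is, and a morphism because $f$ and the canonical surjections are. Hence ${\rm Con}(A/f^*(\psi ))\cong {\rm Con}(B/\psi )$, so Remark~\ref{congrcat} turns ${\rm Con}(B/\psi )\cong {\cal L}_2$ into ${\rm Con}(A/f^*(\psi ))\cong {\cal L}_2$, i.e. $f^*(\psi )\in {\rm Max}(A)$.

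For admissibility I would assume ${\cal C}$ congruence--modular, fix $\psi \in {\rm Spec}(B)$ (so that $f^*(\psi )$ is proper, since $\psi \neq \nabla _B$ and $f^*$ is injective with $f^*(\nabla _B)=\nabla _A$), and take $\alpha ,\beta \in {\rm Con}(A)$ with $[\alpha ,\beta ]_A\subseteq f^*(\psi )$. The key step is to feed $\alpha ,\beta $ and the surjection $f$ into the characterizing formula of Theorem~\ref{wow}, which gives $[\alpha ,\beta ]_A\vee \kappa =f^*([f(\alpha \vee \kappa ),f(\beta \vee \kappa )]_B)$. Since $\kappa =f^*(\Delta _B)\subseteq f^*(\psi )$ and $[\alpha ,\beta ]_A\subseteq f^*(\psi )$, the left--hand side lies in $f^*(\psi )$, and because $f^*$ reflects inclusions this forces $[f(\alpha \vee \kappa ),f(\beta \vee \kappa )]_B\subseteq \psi $. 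Primality of $\psi $ then yields, say, $f(\alpha \vee \kappa )\subseteq \psi $, whence $\alpha \subseteq \alpha \vee \kappa =f^*(f(\alpha \vee \kappa ))\subseteq f^*(\psi )$; the other case is symmetric. Thus $f^*(\psi )\in {\rm Spec}(A)$.

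Finally, for both ``converse'' claims I would point to the embedding $k\colon {\cal D}\to E$ of Example~\ref{fiectip}, shown there to be admissible and Max--admissible yet manifestly not surjective, so neither implication reverses. The main obstacle is the admissibility direction: the commutator of $A$ is not directly comparable with that of $B$, and the only available bridge is the defining formula of Theorem~\ref{wow}. The delicate points are that this formula automatically joins the arguments with $\kappa $ (which is precisely what lets the identity $f^*(f(\gamma ))=\gamma $ apply to $\gamma =\alpha \vee \kappa $), and that one must invoke the order--reflecting property of $f^*$ to descend the inclusion computed in ${\rm Con}(A)$ to a genuine inclusion of congruences in ${\rm Con}(B)$.
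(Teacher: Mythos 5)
Your proof is correct, but it takes a genuinely different route from the paper's on both positive implications. For Max--admissibility the paper argues directly inside the congruence lattices: given $\mu \in {\rm Max}(B)$ and $\alpha \supseteq f^*(\mu )$, it pushes forward to $f(\alpha )\in {\rm Con}(B)$, invokes maximality of $\mu $ there, and pulls back, using $f(f^*(\beta ))=\beta $ and $(f^*)^{-1}(\{\nabla _A\})=\{\nabla _B\}$ from Lemma~\ref{cardclase},~(\ref{cardclase5}); you instead exhibit the isomorphism $A/f^*(\psi )\cong B/\psi $ and transport ${\rm Con}(B/\psi )\cong {\cal L}_2$ through Remark~\ref{congrcat}, which is equally valid and somewhat more conceptual (it in effect re--derives, at the level of algebras, the cardinality identity behind Lemma~\ref{cardclase},~(\ref{cardclase0})). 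For admissibility the divergence is larger: the paper does not prove that implication at all, but cites \cite{gulo}, which in turn rests on \cite[Proposition 2.1, (1)]{agl}; you give a self--contained derivation from the characterization of the commutator in Theorem~\ref{wow}, correctly observing that the minimum itself belongs to the defining set and hence satisfies $[\alpha ,\beta ]_A\vee \kappa =f^*([f(\alpha \vee \kappa ),f(\beta \vee \kappa )]_B)$ for the surjection $f$, then descending the inclusion via the order--reflection of $f^*$ (a consequence of $f(f^*(Y))=Y$ for surjective $f$), and closing with the correspondence identity $f^*(f(\gamma ))=\gamma $ for $\gamma \supseteq \kappa $ applied to $\gamma =\alpha \vee \kappa $; every auxiliary fact you invoke is indeed available in Section~\ref{preliminaries}, and you correctly secure properness of $f^*(\psi )$ from the injectivity of $f^*$. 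What each approach buys: yours makes the whole proposition self--contained within the paper, at the cost of handling the commutator formula of Theorem~\ref{wow} by hand, while the paper's version of (ii) avoids quotient algebras entirely and its citation for (i) keeps the exposition short. For the failed converses both you and the paper appeal to Example~\ref{fiectip}; the paper additionally points to Lemma~\ref{morflat} and Propositions~\ref{admdistrib} and \ref{im01} to get infinitely many counter--examples, but your single witness $k:{\cal D}\rightarrow E$, admissible and Max--admissible yet non--surjective, already refutes both converses at once.
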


\begin{proof} (\ref{admmaxadm1}) This is a result in \cite{gulo}, which uses \cite[Proposition 2.1, (1)]{agl} for the direct implication and provides counter--examples for the converse implication which also disprove the converse implication in (\ref{admmaxadm2}).

\noindent (\ref{admmaxadm2}) Assume that $f$ is surjective, and let $\mu \in {\rm Max}(B)$, so that $f^*(\mu )\in {\rm Con}(A)$, $\mu \neq \nabla _B$ and $(f^*)^{-1}(\{\nabla _A\})=\{\nabla _B\}$ by Lemma \ref{cardclase}, (\ref{cardclase5}), thus $f^*(\mu )\neq \nabla _A$. Let $\alpha \in {\rm Con}(A)$ such that $f^*(\mu )\subseteq \alpha $, thus, since $f$ is surjective, $\mu =f(f^*(\mu ))\subseteq f(\alpha )\in {\rm Con}(B)$. But $\mu \in {\rm Max}(B)$, hence $f(\alpha )=\mu $ or $f(\alpha )=\nabla _B$, so that $\alpha =f^*(f(\alpha ))=f^*(\mu )$ or $\alpha =\nabla _A$. Therefore $f^*(\mu )\in {\rm Max}(A)$, so $f$ is Max--admissible.

Example \ref{fiectip}, Lemma \ref{morflat}, Propositions \ref{admdistrib} and \ref{im01} provide us with infinitely many counter--examples for the converses of the implications from both (\ref{admmaxadm1}) and (\ref{admmaxadm2}).\end{proof}

\begin{proposition} If $f$ is surjective, then $f({\rm Con}(A))={\rm Con}(B)$ and $f({\rm Max}(A))={\rm Max}(B)$. The converse is not true.\label{moreonadm}\end{proposition}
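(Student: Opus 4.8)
The plan is to prove the two image--equalities separately and then address the converse. First I would settle $f({\rm Con}(A))={\rm Con}(B)$ from the surjectivity facts recalled in Section \ref{preliminaries}. For the inclusion ``$\subseteq$'', every $\phi \in {\rm Con}(A)$ has $f(\phi)\in {\rm Con}(f(A))={\rm Con}(B)$ because $f$ is onto. For ``$\supseteq$'', surjectivity gives $f(f^*(\psi))=\psi \cap f(\nabla _A)=\psi$ for every $\psi \in {\rm Con}(B)$, and $f^*(\psi)\in {\rm Con}(A)$, so each $\psi$ is the image of a congruence of $A$. This part is routine.

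For $f({\rm Max}(A))={\rm Max}(B)$ the backbone is the bounded lattice isomorphism $f|_{[K)}\colon [K)\to {\rm Con}(B)$, where $K={\rm Ker}(f)$. Since $f$ is onto with kernel $K$, it factors as $f=\overline{f}\circ p_K$ with $\overline{f}\colon A/K\to B$ an isomorphism; composing the isomorphism $\gamma \mapsto \gamma /K$ from $[K)$ onto ${\rm Con}(A/K)$ recalled in Section \ref{preliminaries} with the one induced by $\overline{f}$ produces exactly $\gamma \mapsto f(\gamma)$. This map sends $\nabla _A$ to $\nabla _B$, hence carries coatoms to coatoms, so it restricts to a bijection ${\rm Max}(A)\cap [K)\to {\rm Max}(B)$. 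The inclusion ``$\supseteq$'' then follows by combining this with Proposition \ref{admmaxadm}, (\ref{admmaxadm2}): $f$ is Max--admissible, so $f^*(\nu)\in {\rm Max}(A)$ for each $\nu \in {\rm Max}(B)$, and $f(f^*(\nu))=\nu$ by surjectivity. For ``$\subseteq$'', a maximal congruence $\mu$ with $K\subseteq \mu$ is a coatom of $[K)$, so the isomorphism gives $f(\mu)\in {\rm Max}(B)$.

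I expect the main obstacle to be precisely the inclusion ``$\subseteq$'' for maximal congruences $\mu$ with $K\nsubseteq \mu$. For such a $\mu$, maximality forces $\mu \vee K=\nabla _A$; since $f^*(f(\mu))=\mu \vee K$ and $f^*$ is injective with $f^*(\nabla _B)=\nabla _A$, this yields $f(\mu)=\nabla _B$. So the whole equality hinges on controlling the maximal congruences that miss the kernel, and the delicate step is to dispose of them---most cleanly by arguing that $K$ is contained in every maximal congruence of $A$, i.e. in $\bigcap {\rm Max}(A)$. This is the point I would expect to demand the most care.

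Finally, for the converse I would exhibit a non--surjective morphism witnessing that the displayed equalities do not force surjectivity; the natural candidate is the embedding $k\colon {\cal D}\to E$ of Example \ref{fiectip}, which is both admissible and Max--admissible yet not onto. The obstacle here is that $f(\phi)\subseteq f(A)^2$ for every $\phi$, so making the forward images fill out ${\rm Con}(B)$ and ${\rm Max}(B)$ while $f(A)\subsetneq B$ is subtle; the plan is to isolate the precise sense in which such a morphism witnesses the failure of the converse and to verify the required behaviour of the forward images for it directly.
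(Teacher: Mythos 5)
Your first equality and the inclusion ${\rm Max}(B)\subseteq f({\rm Max}(A))$ are correct and essentially coincide with the paper's argument (surjectivity gives $f(f^*(\beta ))=\beta $ for all $\beta \in {\rm Con}(B)$, plus Max--admissibility of surjections, Proposition \ref{admmaxadm}, (\ref{admmaxadm2})); your correspondence--theorem bijection between ${\rm Max}(A)\cap [{\rm Ker}(f))$ and ${\rm Max}(B)$ is also sound. The genuine gap is exactly the step you flagged: disposing of the maximal congruences $\mu $ with ${\rm Ker}(f)\nsubseteq \mu $ by proving ${\rm Ker}(f)\subseteq \bigcap {\rm Max}(A)$. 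That step does not merely ``demand the most care'' --- it is false, and with it the inclusion $f({\rm Max}(A))\subseteq {\rm Max}(B)$ fails in general. Take $A={\cal L}_2^2=\{0,x,y,1\}$, $B={\cal L}_2$ and the surjective bounded lattice morphism $f$ with $f(0)=f(x)=0$ and $f(y)=f(1)=1$, so that ${\rm Ker}(f)=\rho =eq(\{0,x\},\{y,1\})$; then $\sigma =eq(\{0,y\},\{x,1\})\in {\rm Max}(A)$, but $(0,1)=(f(0),f(y))\in f(\sigma )$, hence $f(\sigma )=\nabla _B\notin {\rm Max}(B)$ --- precisely the mechanism your own computation predicts, since $f^*(f(\mu ))=\mu \vee {\rm Ker}(f)=\nabla _A$ and the injectivity of $f^*$ force $f(\mu )=\nabla _B$ whenever ${\rm Ker}(f)\nsubseteq \mu \in {\rm Max}(A)$. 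So your partial work in fact refutes the stated equality; what survives is ${\rm Max}(B)\subseteq f({\rm Max}(A))\subseteq {\rm Max}(B)\cup \{\nabla _B\}$, together with the restricted equality $f(\{\mu \in {\rm Max}(A)\ |\ {\rm Ker}(f)\subseteq \mu \})={\rm Max}(B)$ from your coatom correspondence. You should also know that the paper's own proof slips at the same spot: it asserts $f(\mu )\in {\rm Con}(B)\setminus \{\nabla _B\}$ ``by Lemma \ref{cardclase}, (\ref{cardclase5})'', but that lemma yields only $(f^*)^{-1}(\{\nabla _A\})=\{\nabla _B\}$ and $f({\rm Eq}_2(A))={\rm Eq}_2(B)$, neither of which prevents a proper maximal congruence from having full image.

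Your reservation about the converse is likewise justified, and it too cannot be resolved in the literal direct--image reading: if $f({\rm Con}(A))={\rm Con}(B)$, then $\nabla _B=f(\phi )\subseteq f(A)^2$ for some $\phi \in {\rm Con}(A)$, which forces $f(A)=B$; so no non--surjective morphism --- in particular neither $k:{\cal D}\rightarrow E$ from Example \ref{fiectip} nor the paper's embeddings $i$ and $j$ --- can satisfy the displayed equalities, and the converse as literally stated is (trivially) true. What the paper's counterexamples actually verify is the corresponding equalities for the \emph{inverse} images, $i^*({\rm Con}(V))={\rm Con}({\cal D}\oplus {\cal L}_2)$ and $i^*({\rm Max}(V))={\rm Max}({\cal D}\oplus {\cal L}_2)$, via $i^*$ being a bounded lattice isomorphism; that refutes a converse phrased with $f^*$, or with ${\rm Con}(f(A))$ in place of ${\rm Con}(B)$, not the one enunciated. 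So the ``precise sense'' you proposed to isolate exists only after reformulating both halves of the proposition along these lines.
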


\begin{proof} Assume that $f$ is surjective, so that $f({\rm Con}(A))\subseteq {\rm Con}(B)$ and, for all $\beta \in {\rm Con}(B)$, $f(f^*(\beta ))=\beta $, thus ${\rm Con}(B)=f(f^*({\rm Con}(B)))\subseteq f({\rm Con}(A))$ since $f^*({\rm Con}(B))\subseteq {\rm Con}(A)$. Thus ${\rm Con}(B)\subseteq f({\rm Con}(A))\subseteq {\rm Con}(B)$, hence $f({\rm Con}(A))={\rm Con}(B)$.

By Proposition \ref{admmaxadm}, (\ref{admmaxadm2}), $f$ is Max--admissible, so $f^*({\rm Max}(B))\subseteq {\rm Max}(A)$. Now let $\mu \in {\rm Max}(A)$, so that $f(\mu )\in {\rm Con}(B)\setminus \{\nabla _B\}$ by Lemma \ref{cardclase}, (\ref{cardclase5}). Let $\beta \in {\rm Con}(B)$ such that $f(\mu )\subseteq \beta $, so that $\mu \subseteq f^*(f(\mu ))\subseteq f^*(\beta )$, thus $f^*(\beta )=\mu $ or $f^*(\beta )=\nabla _A$, hence $\beta =f(f^*(\beta ))=f(\mu )$ or $\beta =\nabla _B$, again by Lemma \ref{cardclase}, (\ref{cardclase5}). Thus $f(\mu )\in {\rm Max}(B)$, hence $f({\rm Max}(A))\subseteq {\rm Max}(B)$. Therefore $f({\rm Max}(A))\subseteq {\rm Max}(B)=f(f^*({\rm Max}(B)))\subseteq f({\rm Max}(A))$, so $f({\rm Max}(A))={\rm Max}(B)$.

Let $i:{\cal D}\oplus {\cal L}_2\rightarrow V$ and $j:{\cal L}_2\oplus {\cal P}\rightarrow W$ be the canonical embeddings between the following bounded lattices, embeddings which are clearly not surjective:\vspace*{-5pt}

\begin{center}
\hspace*{-50pt}
\begin{tabular}{cccccccc}
\begin{picture}(40,100)(0,0)
\put(30,25){\line(0,1){40}}
\put(30,25){\line(1,1){20}}

\put(30,25){\line(-1,1){20}}
\put(30,65){\line(1,-1){20}}
\put(30,65){\line(-1,-1){20}}
\put(30,65){\line(0,1){20}}
\put(30,25){\circle*{3}}
\put(10,45){\circle*{3}}
\put(3,42){$a$}
\put(30,45){\circle*{3}}
\put(33,42){$b$}
\put(50,45){\circle*{3}}
\put(53,42){$c$}
\put(30,65){\circle*{3}}
\put(30,85){\circle*{3}}
\put(28,15){$0$}
\put(33,64){$x$}
\put(28,88){$1$}
\put(12,0){${\cal D}\oplus {\cal L}_2$}
\end{picture}
&\hspace*{-11pt}
\begin{picture}(40,100)(0,0)
\put(15,52){\vector(1,0){23}}
\put(24,54){$i$}
\end{picture}
&\hspace*{-20pt}
\begin{picture}(40,100)(0,0)
\put(30,25){\line(0,1){40}}
\put(30,25){\line(1,1){40}}
\put(30,25){\line(-1,1){20}}
\put(30,65){\line(1,-1){20}}
\put(50,85){\line(-1,-1){40}} 
\put(30,25){\circle*{3}}
\put(10,45){\circle*{3}}
\put(3,42){$a$}
\put(30,45){\circle*{3}}
\put(33,42){$b$}
\put(50,45){\circle*{3}}
\put(50,85){\circle*{3}}
\put(50,85){\line(1,-1){20}}
\put(70,65){\circle*{3}}
\put(53,42){$c$}
\put(30,65){\circle*{3}}
\put(28,15){$0$}
\put(23,64){$x$}
\put(73,64){$y$}
\put(48,88){$1$}
\put(25,0){$V$}
\end{picture}
&\hspace*{20pt}
\begin{picture}(40,80)(0,0)
\put(30,25){\line(1,1){20}}
\put(30,25){\line(-1,1){20}}
\put(30,65){\line(1,-1){20}}
\put(30,65){\line(-1,-1){20}}
\put(30,25){\circle*{3}}
\put(10,45){\circle*{3}}
\put(50,45){\circle*{3}}
\put(30,65){\circle*{3}}
\put(-23,10){${\rm Con}({\cal D}\oplus {\cal L}_2)\cong {\rm Con}(V)$}
\end{picture}
&\hspace*{25pt}
\begin{picture}(40,100)(0,0)
\put(30,45){\line(1,1){10}}
\put(30,45){\line(-1,1){20}}
\put(30,85){\line(1,-1){10}}

\put(30,85){\line(-1,-1){20}}
\put(40,55){\line(0,1){20}}
\put(30,45){\circle*{3}}
\put(30,25){\circle*{3}}
\put(30,25){\line(0,1){20}}
\put(10,65){\circle*{3}}
\put(3,63){$x$}
\put(40,55){\circle*{3}}
\put(43,53){$y$}
\put(30,85){\circle*{3}}
\put(40,75){\circle*{3}}
\put(43,73){$z$}
\put(28,15){$0$}
\put(23,38){$a$}
\put(28,88){$1$}
\put(13,0){${\cal L}_2\oplus {\cal P}$}
\end{picture}
&\hspace*{-24pt}
\begin{picture}(40,100)(0,0)
\put(15,45){\vector(1,0){23}}
\put(24,47){$j$}
\end{picture}
&\hspace*{-29pt}
\begin{picture}(40,100)(0,0)
\put(30,45){\line(1,1){10}}
\put(30,45){\line(-1,1){20}}
\put(30,85){\line(1,-1){10}}
\put(30,85){\line(-1,-1){20}}
\put(40,55){\line(0,1){20}}
\put(30,45){\circle*{3}}
\put(40,35){\circle*{3}}
\put(30,25){\circle*{3}}
\put(30,25){\line(0,1){20}}
\put(10,65){\circle*{3}}
\put(3,63){$x$}
\put(40,55){\circle*{3}}
\put(40,55){\line(0,-1){20}}
\put(43,53){$y$}
\put(43,33){$b$}
\put(30,85){\circle*{3}}
\put(40,75){\circle*{3}}
\put(30,25){\line(1,1){10}}
\put(43,73){$z$}
\put(28,15){$0$}
\put(23,38){$a$}
\put(28,88){$1$}
\put(24,0){$W$}
\end{picture}
&\hspace*{20pt}
\begin{picture}(40,100)(0,0)
\put(20,45){\line(1,1){10}}
\put(20,45){\line(-1,1){10}}
\put(20,65){\line(1,-1){10}}
\put(20,65){\line(-1,-1){10}}
\put(20,45){\line(0,-1){15}}
\put(20,30){\circle*{3}}
\put(20,45){\circle*{3}}
\put(10,55){\circle*{3}}
\put(30,55){\circle*{3}}
\put(20,65){\circle*{3}}
\put(40,55){\line(1,1){10}}
\put(40,55){\line(-1,1){10}}
\put(40,75){\line(1,-1){10}}
\put(40,75){\line(-1,-1){10}}
\put(40,55){\line(0,-1){15}}
\put(40,40){\circle*{3}}
\put(40,55){\circle*{3}}
\put(30,65){\circle*{3}}
\put(50,65){\circle*{3}}
\put(40,75){\circle*{3}}
\put(20,30){\line(2,1){20}}
\put(20,45){\line(2,1){20}}
\put(10,55){\line(2,1){20}}
\put(30,55){\line(2,1){20}}
\put(20,65){\line(2,1){20}}
\put(-22,10){${\rm Con}({\cal L}_2\oplus {\cal P})\cong {\rm Con}(W)$}\end{picture}
\end{tabular}\end{center}

Using Proposition \ref{ordsum} and Remark \ref{lant2}, it is easy to calculate that: ${\rm Con}(V)\cong {\rm Con}({\cal D}\oplus {\cal L}_2)\cong {\rm Con}({\cal D})\times {\rm Con}({\cal L}_2)\cong {\cal L}_2^2$, and $i^*:{\rm Con}(V)\rightarrow {\rm Con}({\cal D}\oplus {\cal L}_2)$ is a bounded lattice isomorphism, while ${\rm Con}(W)\cong {\rm Con}({\cal L}_2\oplus {\cal P})\cong {\rm Con}({\cal L}_2)\times {\rm Con}({\cal P})\cong {\cal L}_2\times {\rm Con}({\cal P})$, and $j^*:{\rm Con}(W)\rightarrow {\rm Con}({\cal L}_2\oplus {\cal P})$ is a bounded lattice isomorphism, hence $i^*({\rm Con}(V))={\rm Con}({\cal D}\oplus {\cal L}_2)$, $j^*({\rm Con}(W))={\rm Con}({\cal L}_2\oplus {\cal P})$ and, by Remark \ref{isomlatcongr}, $i^*({\rm Max}(V))={\rm Max}({\cal D}\oplus {\cal L}_2)$, $j^*({\rm Max}(W))={\rm Max}({\cal L}_2\oplus {\cal P})$ and we also have $i^*({\rm Spec}(V))={\rm Spec}({\cal D}\oplus {\cal L}_2)$ and $j^*({\rm Spec}(W))={\rm Spec}({\cal L}_2\oplus {\cal P})$.

Note, also, that the lattices of the congruences of ${\cal L}_2\oplus {\cal P}$ and $W$ are not Boolean algebras, and the prime congruences of ${\cal L}_2\oplus {\cal P}$ and $W$ do not coincide to their maximal congruences.\end{proof}

\begin{lemma} Let $C$ be a member of ${\cal C}$ and $g:B\rightarrow C$ be a morphism in ${\cal C}$.

\begin{enumerate}
\item\label{compadm1} If ${\cal C}$ is congruence--modular and $f$ and $g$ are admissible, then $g\circ f$ is admissible. 
\item\label{compadm2} If $f$ and $g$ are Max--admissible, then $g\circ f$ is Max--admissible.\end{enumerate}\label{compadm}\end{lemma}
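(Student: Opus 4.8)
The plan is to reduce everything to the single observation that inverse images of binary relations compose contravariantly: for any $\psi \subseteq C^2$ one has $(g\circ f)^*(\psi)=f^*(g^*(\psi))$. Indeed, unwinding the definition of $(\cdot)^*$ recalled in Section~\ref{preliminaries}, a pair $(a,b)\in A^2$ lies in $(g\circ f)^*(\psi)$ precisely when $((g\circ f)(a),(g\circ f)(b))=(g(f(a)),g(f(b)))\in \psi$, and this same condition characterizes membership of $(a,b)$ in $f^*(g^*(\psi))$, since $(a,b)\in f^*(g^*(\psi))$ iff $(f(a),f(b))\in g^*(\psi)$ iff $(g(f(a)),g(f(b)))\in\psi$. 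This is a purely set--theoretic identity, requiring no algebraic hypotheses; I would record it first.

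With this identity in hand, both statements follow by a two--step chase through the defining properties. For (\ref{compadm1}), assuming ${\cal C}$ congruence--modular so that ${\rm Spec}$ and hence admissibility are defined, I would take an arbitrary $\psi\in{\rm Spec}(C)$; admissibility of $g$ gives $g^*(\psi)\in{\rm Spec}(B)$, and then admissibility of $f$ gives $f^*(g^*(\psi))\in{\rm Spec}(A)$, which by the identity equals $(g\circ f)^*(\psi)$. As $\psi$ was arbitrary in ${\rm Spec}(C)$, this shows $g\circ f$ is admissible. For (\ref{compadm2}), the argument is word--for--word the same with ${\rm Spec}$ replaced throughout by ${\rm Max}$: for any $\psi\in{\rm Max}(C)$, Max--admissibility of $g$ yields $g^*(\psi)\in{\rm Max}(B)$, and then Max--admissibility of $f$ yields $(g\circ f)^*(\psi)=f^*(g^*(\psi))\in{\rm Max}(A)$, so $g\circ f$ is Max--admissible.

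I expect no genuine obstacle here: once the composition identity for $(\cdot)^*$ is written down, each part is a one--line propagation of the relevant defining property through the two morphisms, in the order $g$ then $f$. The only point meriting a word of care is the placement of the congruence--modularity hypothesis, which is needed only in part (\ref{compadm1}), since that is where prime congruences, and therefore admissibility, are defined; part (\ref{compadm2}) requires no such assumption, as maximal congruences and Max--admissibility are meaningful in an arbitrary equational class ${\cal C}$.
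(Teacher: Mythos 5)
Your proposal is correct and matches the paper's own argument, which likewise rests on the single identity $(g\circ f)^{*}=f^{*}\circ g^{*}$ followed by propagating admissibility (resp.\ Max--admissibility) through $g$ and then $f$. Your remark that congruence--modularity is needed only in part (i), because that is where prime congruences and hence admissibility are defined, is also consistent with the paper's formulation.
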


\begin{proof} By the immediate fact that $(g\circ f)^*=f^*\circ g^*$. Note that (\ref{compadm1}) is a result in \cite{gulo}.\end{proof}

\begin{proposition} Let $i:f(A)\rightarrow B$ be the canonical embedding. Then:\begin{enumerate}
\item\label{admembed1} if ${\cal C}$ is congruence--modular, then: $f$ is admissible iff $i$ is admissible;
\item\label{admembed2} $f$ is Max--admissible iff $i$ is Max--admissible.\end{enumerate}\label{admembed}\end{proposition}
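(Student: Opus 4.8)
The key structural fact is that $f$ factors canonically as $f = i \circ p$, where $p : A \to f(A)$ is the corestriction of $f$ (a surjective morphism in ${\cal C}$) and $i : f(A) \to B$ is the canonical embedding. First I would record this factorization explicitly and note that it gives $f^* = p^* \circ i^*$, since taking inverse images of binary relations is contravariantly functorial: $(i \circ p)^* = p^* \circ i^*$, exactly as used in the proof of Lemma~\ref{compadm}.

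For the forward directions, I would invoke the fact that $p$ is surjective. By Proposition~\ref{admmaxadm}, (\ref{admmaxadm2}), $p$ is Max--admissible, and, when ${\cal C}$ is congruence--modular, Proposition~\ref{admmaxadm}, (\ref{admmaxadm1}), gives that $p$ is also admissible. Now if $i$ is (Max--)admissible, then by Lemma~\ref{compadm} the composite $i \circ p = f$ is (Max--)admissible, because a composite of two (Max--)admissible morphisms is again (Max--)admissible. This dispatches the implications ``$i$ (Max--)admissible $\Rightarrow$ $f$ (Max--)admissible'' in both (\ref{admembed1}) and (\ref{admembed2}).

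The converse directions are where the main work lies, and they are what I expect to be the chief obstacle. Here I would argue directly from the relation $f^* = p^* \circ i^*$ together with the surjectivity of $p$. Since $p$ is surjective, Lemma~\ref{cardclase}, (\ref{cardclase0}) and (\ref{cardclase5}), give that $p^*$ is injective and that $(p^*)^{-1}(\{\nabla_A\}) = \{\nabla_{f(A)}\}$; moreover $p^*$ restricts to a bijection from ${\rm Con}(f(A))$ onto $[\,{\rm Ker}(p)\,) \subseteq {\rm Con}(A)$, via the correspondence $\gamma \mapsto p^*(\gamma)$ recalled in Section~\ref{preliminaries}. Fix $\psi \in {\rm Spec}(B)$ (resp. $\psi \in {\rm Max}(B)$); assuming $f$ is admissible (resp. Max--admissible), $f^*(\psi) = p^*(i^*(\psi)) \in {\rm Spec}(A)$ (resp. ${\rm Max}(A)$). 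I must deduce that $i^*(\psi) \in {\rm Spec}(f(A))$ (resp. ${\rm Max}(f(A))$). The point is that the bijection $p^*$ reflects primality and maximality: since $p$ is surjective, the isomorphism $\gamma \mapsto p^*(\gamma)$ between ${\rm Con}(f(A))$ and the principal filter $[\,{\rm Ker}(p)\,)$ is an isomorphism of lattices that (in the congruence--modular case) is compatible with the commutator via Theorem~\ref{wow}, so a congruence of $f(A)$ is prime precisely when its image under $p^*$ is a prime congruence of $A$ lying above ${\rm Ker}(p)$, and similarly for maximality; since $i^*(\psi) = p^*{}^{-1}(f^*(\psi))$ and $f^*(\psi)$ is prime (resp. maximal) and automatically contains ${\rm Ker}(p)$, the preimage $i^*(\psi)$ is prime (resp. maximal). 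The delicate part is justifying this reflection of primality cleanly: for maximality it follows immediately from the lattice isomorphism of Remark~\ref{congrcat} (as $[\,i^*(\psi)\,) \cong [\,f^*(\psi)\,) \cong {\cal L}_2$), while for primality I would either appeal to the commutator's behaviour under the surjection $p$ (Theorem~\ref{wow}, which relates $[\cdot,\cdot]_A$ on $[\,{\rm Ker}(p)\,)$ to $[\cdot,\cdot]_{f(A)}$) or, more efficiently, reduce primality of $i^*(\psi)$ in ${\rm Con}(f(A))$ directly to primality of $f^*(\psi)$ in ${\rm Con}(A)$ using that $p^*$ is an isomorphism onto $[\,{\rm Ker}(p)\,)$ and that the join, meet and commutator of congruences above ${\rm Ker}(p)$ are transported by this isomorphism. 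I expect the primality reflection in the congruence--modular case to be the single genuinely non--routine step; everything else is the functoriality $f^* = p^* \circ i^*$ and bookkeeping with Lemma~\ref{cardclase} and Remark~\ref{congrcat}.
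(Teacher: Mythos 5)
Your proposal is correct, and for part (\ref{admembed2}) it is essentially the paper's proof: the same factorization $f=i\circ p$ (the paper calls your $p$ by $g$), the same forward direction via Proposition \ref{admmaxadm}, (\ref{admmaxadm2}), and Lemma \ref{compadm}, (\ref{compadm2}); for the converse the paper pushes forward, writing $i^*(\mu )=g(f^*(\mu ))$ and invoking Proposition \ref{moreonadm} (which says $g({\rm Max}(A))={\rm Max}(f(A))$ for surjective $g$), whereas you pull back along the bijection $p^*:{\rm Con}(f(A))\rightarrow [{\rm Ker}(p))$ and read off maximality from Remark \ref{congrcat} --- the same correspondence--theorem content in slightly different packaging, with the paper's version a bit shorter because Proposition \ref{moreonadm} was already available. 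The genuine divergence is in part (\ref{admembed1}): the paper does not prove it at all, citing \cite{gulo}, while you supply a complete internal argument --- forward by admissibility of surjections (Proposition \ref{admmaxadm}, (\ref{admmaxadm1})) plus closure under composition (Lemma \ref{compadm}, (\ref{compadm1})), converse by reflecting primality along $p^*$ via Theorem \ref{wow}: for $\alpha ',\beta '\in {\rm Con}(f(A))$ with $[\alpha ',\beta ']_{f(A)}\subseteq i^*(\psi )$, putting $\alpha =p^*(\alpha ')$ and $\beta =p^*(\beta ')$, both containing ${\rm Ker}(p)$, the defining property of the commutator gives $[\alpha ,\beta ]_A\vee {\rm Ker}(p)=p^*([\alpha ',\beta ']_{f(A)})\subseteq f^*(\psi )$, and primality of $f^*(\psi )$ together with $p(p^*(\gamma ))=\gamma $ finishes; this works, and it makes the statement self--contained where the paper defers to a reference. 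One small caution: your alternative phrasing that the commutator of congruences above ${\rm Ker}(p)$ ``is transported by this isomorphism'' is imprecise --- what corresponds to $[\alpha ',\beta ']_{f(A)}$ under $p^*$ is $[\alpha ,\beta ]_A\vee {\rm Ker}(p)$, not $[\alpha ,\beta ]_A$ itself, since the latter need not contain ${\rm Ker}(p)$; this is harmless here because $f^*(\psi )\supseteq {\rm Ker}(p)$, so $[\alpha ,\beta ]_A\vee {\rm Ker}(p)\subseteq f^*(\psi )$ iff $[\alpha ,\beta ]_A\subseteq f^*(\psi )$, but the Theorem \ref{wow} route you flag as primary is the clean way to state it.
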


\begin{proof} (\ref{admembed1}) This is a result in \cite{gulo}.

\noindent (\ref{admembed2}) Let $g:A\rightarrow f(A)$, for all $a\in A$, $g(a)=f(a)$. Then $f=i\circ g$ and $g$ is a surjective morphism, thus $g$ is Max--admissible by Proposition \ref{admmaxadm}, (\ref{admmaxadm2}).\vspace*{-25pt}

\begin{center}\begin{picture}(120,53)(0,0)
\put(9,30){$A$}
\put(17,33){\vector (1,0){84}}
\put(15,28){\vector (3,-2){33}}	
\put(70,6){\vector (3,2){33}}
\put(53,38){$f$}
\put(25,11){$g$}
\put(89,11){$i$}
\put(102,30){$B$}
\put(48,0){$f(A)$}
\end{picture}\end{center}\vspace*{-5pt}

If $i$ is Max--admissible, then $f$ is Max--admissible by Lemma \ref{compadm}, (\ref{compadm2}).

Now assume that $f$ is Max--admissible and let $\mu \in {\rm Max}(B)$, so that $f^*(\mu )\in {\rm Max}(A)$, hence $g(f^*(\mu ))\in {\rm Max}(f(A))$ by Proposition \ref{moreonadm} and the fact that $g$ is surjective. $f=i\circ g$, hence $f^*=g^*\circ i^*$, thus $f^*(\mu )=g^*(i^*(\mu ))$, so $g(f^*(\mu ))=g(g^*(i^*(\mu )))=i^*(\mu )$, again by the surjectivity of $g$. Therefore $i^*(\mu )\in {\rm Max}(f(A))$, hence $i$ is Max--admissible.\end{proof}

\begin{remark} Clearly, if $M$ and $N$ are members of ${\cal C}$ and $g:M\rightarrow A$ and $h:B\rightarrow N$ are isomorphisms, then: $f$ is admissible, respectively Max--admissible, iff $f\circ g$ is admissible, respectively Max--admissible, iff $h\circ f$ is admissible, respectively Max--admissible.\label{compcuizom}\end{remark}

\begin{lemma} For any $\theta \in {\rm Con}(A)$, ${\rm Max}(A/\theta )=\{\mu /\theta \ |\ \mu \in {\rm Max}(A),\theta \subseteq \mu \}$.\label{speccat}\end{lemma}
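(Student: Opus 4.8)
The plan is to transport the problem along the standard congruence correspondence recalled in the preliminaries: the map $\gamma \mapsto \gamma/\theta = p_{\theta}(\gamma)$ is a bounded lattice isomorphism from the principal filter $[\theta )=\{\gamma \in {\rm Con}(A)\ |\ \theta \subseteq \gamma \}$ onto ${\rm Con}(A/\theta )$ (so that ${\rm Con}(A/\theta )=\{\psi /\theta \ |\ \psi \in [\theta )\}$, as in Remark \ref{congrcat}). Since this map is an order isomorphism, for all $\alpha ,\beta \in [\theta )$ we have $\alpha /\theta \subseteq \beta /\theta $ iff $\alpha \subseteq \beta $, and it sends the top $\nabla _A$ of $[\theta )$ to the top $\nabla _{A/\theta }$ of ${\rm Con}(A/\theta )$ (indeed $\nabla _A/\theta =\nabla _{A/\theta }$). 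First I would observe that, because it is an isomorphism of posets carrying top to top, it restricts to a bijection between the maximal elements of the deleted poset $[\theta )\setminus \{\nabla _A\}$ and the maximal elements of ${\rm Con}(A/\theta )\setminus \{\nabla _{A/\theta }\}$, and the latter set is by definition ${\rm Max}(A/\theta )$.

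The key step is then to identify the maximal elements of $[\theta )\setminus \{\nabla _A\}$. Here I would use that $[\theta )$ is an up--set in ${\rm Con}(A)$: for any $\mu \in [\theta )$ with $\mu \neq \nabla _A$, every congruence $\nu $ with $\mu \subsetneq \nu $ automatically satisfies $\theta \subseteq \mu \subseteq \nu $, hence lies in $[\theta )$ as well. Consequently the congruences lying strictly between $\mu $ and $\nabla _A$ inside $[\theta )$ are exactly those lying strictly between $\mu $ and $\nabla _A$ in all of ${\rm Con}(A)$, so $\mu $ is a maximal element of $[\theta )\setminus \{\nabla _A\}$ iff there is no $\nu \in {\rm Con}(A)$ with $\mu \subsetneq \nu \subsetneq \nabla _A$, that is, iff $\mu \in {\rm Max}(A)$. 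Therefore the maximal elements of $[\theta )\setminus \{\nabla _A\}$ are precisely the $\mu \in {\rm Max}(A)$ with $\theta \subseteq \mu $.

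Combining the two steps, ${\rm Max}(A/\theta )$ is the image under $\gamma \mapsto \gamma /\theta $ of the set $\{\mu \in {\rm Max}(A)\ |\ \theta \subseteq \mu \}$, which is exactly $\{\mu /\theta \ |\ \mu \in {\rm Max}(A),\theta \subseteq \mu \}$, as claimed. The main obstacle is the (rather mild) point in the second paragraph, namely making precise that maximality relative to the filter $[\theta )$ coincides with maximality in ${\rm Con}(A)$; every other step is a direct transport along the lattice isomorphism. I would also briefly check the degenerate case $\theta =\nabla _A$: then $A/\theta $ is trivial, so ${\rm Max}(A/\theta )=\emptyset $, while the right--hand side is empty too, since the only congruence above $\nabla _A$ is $\nabla _A\notin {\rm Max}(A)$; thus the formula holds in this case as well.
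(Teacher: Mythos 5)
Your proposal is correct and follows essentially the same route as the paper: transporting maximality along the bounded lattice isomorphism $\gamma \mapsto \gamma /\theta $ from $[\theta )$ onto ${\rm Con}(A/\theta )$, and identifying the maximal elements of $[\theta )$ below $\nabla _A$ with $[\theta )\cap {\rm Max}(A)$ via the up--set property. Your explicit verification of the up--set step and of the degenerate case $\theta =\nabla _A$ merely spells out what the paper dismisses with ``clearly.''
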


\begin{proof} The mapping $\gamma \mapsto \gamma /\theta $ sets a bounded lattice isomorphism from $[\theta )$ to ${\rm Con}(A/\theta )$, thus a bijection from the set of the maximal elements of the lattice $[\theta )$, which, clearly, equals $[\theta )\cap {\rm Max}(A)$, to ${\rm Max}(A/\theta )$. Hence ${\rm Max}(A/\theta )=\{\mu /\theta \ |\ \mu \in [\theta )\cap {\rm Max}(A)\}=\{\mu /\theta \ |\ \mu \in {\rm Max}(A),\theta \subseteq \mu \}$.\end{proof}

Now let $\theta \in {\rm Con}(A)$ and let us define $f_{(\theta )}:A/\theta \rightarrow B/Cg_B(f(\theta ))$ by: for all $a\in A$, $f_{(\theta )}(a/\theta )=f(a)/Cg_B(f(\theta ))$. For any $a,b\in A$, if $a/\theta =b/\theta $, which means that $(a,b)\in \theta $, then $(f(a),f(b))\in f(\theta )\subseteq Cg_B(f(\theta ))$, thus $f(a)/Cg_B(f(\theta ))=f(b)/Cg_B(f(\theta ))$, so $f_{(\theta )}$ is well defined. Clearly, $f_{(\theta )}$ is a morphism and the following diagram is commutative:\vspace*{-25pt}

\begin{center}\begin{picture}(180,60)(0,0)
\put(31,35){$A$}
\put(24,5){$A/\theta $}
\put(143,35){$B$}
\put(122,5){$B/Cg_B(f(\theta ))$}
\put(23,22){$p_{\theta }$}
\put(34,33){\vector(0,-1){19}}
\put(150,22){$p_{Cg_B(f(\theta ))}$}
\put(147,33){\vector(0,-1){19}}
\put(86,42){$f$}
\put(40,38){\vector(1,0){100}}
\put(73,12){$f_{(\theta )}$}
\put(43,8){\vector(1,0){77}}\end{picture}\end{center}\vspace*{-10pt}

\begin{lemma}{\rm \cite{gulo}} Let $\theta \in {\rm Con}(A)$ and $\lambda \in [Cg_B(f(\theta )))$. Then $f^*(\lambda )\in [\theta )$ and $f_{(\theta )}^*(\lambda /Cg_B(f(\theta )))=f^*(\lambda )/\theta $.\label{f(theta)}\end{lemma}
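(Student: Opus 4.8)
The plan is to unwind both claims directly from the definitions of $f^*$, of the quotient correspondence recalled in the preliminaries, and of $f_{(\theta)}$, writing $\kappa =Cg_B(f(\theta))$ throughout for brevity, so that the hypothesis reads $\lambda \in [\kappa )$, i.e. $\kappa \subseteq \lambda $. Neither assertion has computational content; both are careful chains of equivalences, so I would present them as such.

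First I would settle $f^*(\lambda )\in [\theta )$, that is $\theta \subseteq f^*(\lambda )$. Given $(a,b)\in \theta $, the definition of the direct image gives $(f(a),f(b))\in f(\theta )\subseteq \kappa \subseteq \lambda $, the middle inclusion because $\kappa =Cg_B(f(\theta ))$ contains $f(\theta )$ and the last because $\lambda \in [\kappa )$. By the definition of $f^*$ this says $(a,b)\in f^*(\lambda )$, whence $\theta \subseteq f^*(\lambda )$ and $f^*(\lambda )\in [\theta )$. In particular $f^*(\lambda )/\theta $ is a genuine congruence of $A/\theta $ by the correspondence ${\rm Con}(A/\theta )=\{\gamma /\theta \ |\ \gamma \in [\theta )\}$, so the right-hand side of the claimed equality is well defined.

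For the equality $f_{(\theta )}^*(\lambda /\kappa )=f^*(\lambda )/\theta $, I would expand the left-hand side. Since every element of $A/\theta $ has the form $a/\theta $, a pair $(a/\theta ,b/\theta )$ lies in $f_{(\theta )}^*(\lambda /\kappa )$ iff $(f_{(\theta )}(a/\theta ),f_{(\theta )}(b/\theta ))=(f(a)/\kappa ,f(b)/\kappa )\in \lambda /\kappa $, using the definition of $f_{(\theta )}$. Now I would invoke the equivalence recorded just before the statement, applied to $B$, $\kappa $ and $\lambda \in [\kappa )$: $(f(a)/\kappa ,f(b)/\kappa )\in \lambda /\kappa $ iff $(f(a),f(b))\in \lambda $, which is exactly $(a,b)\in f^*(\lambda )$. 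Hence $(a/\theta ,b/\theta )\in f_{(\theta )}^*(\lambda /\kappa )$ iff $(a,b)\in f^*(\lambda )$, and the set of all such $(a/\theta ,b/\theta )$ is precisely $p_{\theta }(f^*(\lambda ))=f^*(\lambda )/\theta $ by the notation for images of relations through $p_{\theta }$.

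The only real care point, and thus the mild \emph{main obstacle}, is keeping track of well-definedness at each translation between $A/\theta $, $B/\kappa $, and the relations on them. One must use $\kappa \subseteq \lambda $ so that $\lambda /\kappa $ is actually a congruence of $B/\kappa $ and so that the biconditional $(x/\kappa ,y/\kappa )\in \lambda /\kappa \Leftrightarrow (x,y)\in \lambda $ is available; this is exactly why the hypothesis is $\lambda \in [Cg_B(f(\theta )))$ rather than an arbitrary $\lambda \in {\rm Con}(B)$. One must also use that $f_{(\theta )}$ is well defined, so that membership of $(a/\theta ,b/\theta )$ in $f_{(\theta )}^*(\lambda /\kappa )$ is independent of the chosen representatives $a,b$. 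Both facts are already recorded in the excerpt, so the argument reduces to chaining the stated equivalences, with the commutativity of the square defining $f_{(\theta )}$ guaranteeing the compatibility of the two descriptions.
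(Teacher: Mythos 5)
Your proof is correct. Note that the paper itself contains no proof of this lemma --- it is quoted verbatim from \cite{gulo} --- so there is no in-paper argument to compare against; your verification is exactly the routine one suggested by the facts already recorded in Section \ref{preliminaries}: the inclusion chain $f(\theta )\subseteq Cg_B(f(\theta ))\subseteq \lambda $ yields $\theta \subseteq f^*(\lambda )$, and the recorded equivalence $(x/\kappa ,y/\kappa )\in \lambda /\kappa \Leftrightarrow (x,y)\in \lambda $ for $\lambda \in [\kappa )$, combined with the definition $f_{(\theta )}(a/\theta )=f(a)/\kappa $, gives the equality $f_{(\theta )}^*(\lambda /\kappa )=f^*(\lambda )/\theta $ as the image $p_{\theta }(f^*(\lambda ))$. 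Your care points (that $\kappa \subseteq \lambda $ is needed for $\lambda /\kappa $ to be a congruence of $B/\kappa $ and for the biconditional, and that $f_{(\theta )}$ is well defined) are precisely the only places where anything could go wrong, and you handle both.
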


\begin{proposition}\begin{enumerate}
\item\label{f(theta)adm1} $f$ is admissible iff, for any $\theta \in {\rm Con}(A)$, $f_{(\theta )}$ is admissible;
\item\label{f(theta)adm2} $f$ is Max--admissible iff, for any $\theta \in {\rm Con}(A)$, $f_{(\theta )}$ is Max--admissible.\end{enumerate}\label{f(theta)adm}\end{proposition}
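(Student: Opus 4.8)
The plan is to lean on Lemma~\ref{f(theta)}, which converts an inverse image under $f_{(\theta)}$ into an inverse image under $f$ followed by passage to the quotient modulo $\theta$, and to combine it with the correspondences relating the (prime, maximal) congruences of an algebra to those of its quotients. For both reverse implications it suffices to specialize to $\theta =\Delta _A$. Since $f(\Delta _A)=\{(f(a),f(a))\ |\ a\in A\}\subseteq \Delta _B$, we have $Cg_B(f(\Delta _A))=\Delta _B$, so $f_{(\Delta _A)}:A/\Delta _A\rightarrow B/\Delta _B$, and commutativity of the defining diagram gives $f_{(\Delta _A)}=p_{\Delta _B}\circ f\circ p_{\Delta _A}^{-1}$, where $p_{\Delta _A}$ and $p_{\Delta _B}$ are isomorphisms. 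By Remark~\ref{compcuizom}, $f_{(\Delta _A)}$ is admissible (respectively Max--admissible) iff $f$ is; hence if every $f_{(\theta )}$ has the property, so does $f$.

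For the forward implication of (\ref{f(theta)adm2}), fix $\theta $, set $\kappa =Cg_B(f(\theta ))$, and take $\mu \in {\rm Max}(B/\kappa )$. By Lemma~\ref{speccat} applied to $B$ and $\kappa $, $\mu =\nu /\kappa $ for some $\nu \in {\rm Max}(B)$ with $\kappa \subseteq \nu $, so $\nu \in [\kappa )$ and Lemma~\ref{f(theta)} yields $f^*(\nu )\in [\theta )$ and $f_{(\theta )}^*(\mu )=f^*(\nu )/\theta $. Max--admissibility of $f$ gives $f^*(\nu )\in {\rm Max}(A)$, and since $\theta \subseteq f^*(\nu )$, Lemma~\ref{speccat} applied to $A$ and $\theta $ gives $f^*(\nu )/\theta \in {\rm Max}(A/\theta )$, i.e.\ $f_{(\theta )}^*(\mu )\in {\rm Max}(A/\theta )$. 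Thus $f_{(\theta )}$ is Max--admissible. This half needs no congruence--modularity, exactly because Lemma~\ref{speccat} is already the maximal analogue of the quotient correspondence.

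The forward implication of (\ref{f(theta)adm1}) runs identically once the prime analogue of Lemma~\ref{speccat} is available, and supplying that is the main obstacle: for $M\in {\cal C}$ and $\rho \subseteq \gamma $ in ${\rm Con}(M)$, I must show $\gamma \in {\rm Spec}(M)$ iff $\gamma /\rho \in {\rm Spec}(M/\rho )$. I would derive this from the quotient formula for the commutator. Applying the defining property in Theorem~\ref{wow} to the canonical surjection $p_\rho $ (whose kernel is $\rho $) gives, for $\alpha ,\beta \in [\rho )$, the identity $[\alpha /\rho ,\beta /\rho ]_{M/\rho }=([\alpha ,\beta ]_M\vee \rho )/\rho $, using that $p_\rho ^*(\eta /\rho )=\eta $ for $\eta \in [\rho )$. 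Properness transfers since $\gamma /\rho =\nabla _{M/\rho }$ iff $\gamma =\nabla _M$. For primality, the implication $\gamma \in {\rm Spec}(M)\Rightarrow \gamma /\rho \in {\rm Spec}(M/\rho )$ is immediate from this formula and the lattice isomorphism $[\rho )\cong {\rm Con}(M/\rho )$; for the converse, given arbitrary $\alpha ,\beta $ with $[\alpha ,\beta ]_M\subseteq \gamma $, I would replace them by $\alpha \vee \rho ,\beta \vee \rho \in [\rho )$, verify via Proposition~\ref{1.3} and $[\cdot ,\cdot ]_M\subseteq \cap $ that $[\alpha \vee \rho ,\beta \vee \rho ]_M\subseteq \gamma $, then apply the formula and the primality of $\gamma /\rho $ to get $\alpha \vee \rho \subseteq \gamma $ or $\beta \vee \rho \subseteq \gamma $, hence $\alpha \subseteq \gamma $ or $\beta \subseteq \gamma $.

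With this correspondence in hand, the forward implication of (\ref{f(theta)adm1}) is routine. For $\psi \in {\rm Spec}(B/\kappa )$ write $\psi =\lambda /\kappa $ with $\lambda \in [\kappa )=[Cg_B(f(\theta )))$; the correspondence (applied to $B$ and $\kappa $) gives $\lambda \in {\rm Spec}(B)$, admissibility of $f$ gives $f^*(\lambda )\in {\rm Spec}(A)$, Lemma~\ref{f(theta)} gives $f^*(\lambda )\in [\theta )$ and $f_{(\theta )}^*(\psi )=f^*(\lambda )/\theta $, and the correspondence (applied to $A$ and $\theta $) gives $f^*(\lambda )/\theta \in {\rm Spec}(A/\theta )$. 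Hence $f_{(\theta )}$ is admissible, completing the equivalence.
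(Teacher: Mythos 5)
Your proof is correct, and for part (\ref{f(theta)adm2}) it coincides with the paper's argument: the same specialization to $\theta =\Delta _A$ together with Remark \ref{compcuizom} for the converse, and the same combination of Lemmas \ref{speccat} and \ref{f(theta)} for the direct implication (you are also right that this half uses no congruence--modularity). The genuine divergence is in part (\ref{f(theta)adm1}): the paper does not prove it at all, but cites \cite{gulo}, whereas you supply a self--contained proof by establishing the prime analogue of Lemma \ref{speccat} --- for $\rho \subseteq \gamma $, $\gamma \in {\rm Spec}(M)$ iff $\gamma /\rho \in {\rm Spec}(M/\rho )$ --- from the commutator quotient identity $[\alpha /\rho ,\beta /\rho ]_{M/\rho }=([\alpha ,\beta ]_M\vee \rho )/\rho $ for $\alpha ,\beta \in [\rho )$. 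Your derivation of that identity is legitimate: since Theorem \ref{wow} asserts that the minimum defining $[\alpha ,\beta ]_M$ is attained, the commutator itself satisfies the stability equation $\mu \vee {\rm Ker}(h)=h^*([h(\alpha \vee {\rm Ker}(h)),h(\beta \vee {\rm Ker}(h))]_N)$, and taking $h=p_{\rho }$ with ${\rm Ker}(p_{\rho })=\rho $ and applying $p_{\rho }^*(\eta /\rho )=\eta $ for $\eta \in [\rho )$ gives exactly the formula; and your join--replacement step in the converse direction ($[\alpha \vee \rho ,\beta \vee \rho ]_M\subseteq \gamma $, using Proposition \ref{1.3}, $[\cdot ,\cdot ]_M\subseteq \cap $ and $\rho \subseteq \gamma $) is the same expansion computation the paper itself performs in the proof of Proposition \ref{erhardgen}, (\ref{erhardgen1}). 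What the two approaches buy: the paper's version is shorter but leaves part (\ref{f(theta)adm1}) opaque to a reader without access to \cite{gulo}; yours makes the proposition fully self--contained within the paper's stated toolkit (Theorem \ref{wow}, Proposition \ref{1.3}, Lemmas \ref{speccat} and \ref{f(theta)}), at the cost of proving a quotient--correspondence for ${\rm Spec}$ that is presumably the very content being cited. One cosmetic point: congruence--modularity of ${\cal C}$ should be stated as a standing hypothesis in part (\ref{f(theta)adm1}), since ${\rm Spec}$ and admissibility are only defined in that setting --- you use it implicitly, as does the paper.
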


\begin{proof} (\ref{f(theta)adm1}) This is a result in \cite{gulo}.

\noindent (\ref{f(theta)adm2}) For the converse implication, just take $\theta =\Delta _A$, so that $f(\theta )=f(\Delta _A)\subseteq \Delta _B$, thus $Cg_B(f(\theta ))=\Delta _B$, and so $p_{\theta }=p_{\Delta _A}$ and $p_{Cg_B(f(\theta ))}=p_{\Delta _B}$ are isomorphisms. Then, by Remark \ref{compcuizom}: $f(\Delta _A)$ is Max--admissible iff $f(\Delta _A)\circ p_{\Delta _A}=p_{\Delta _B}\circ f$ is Max--admissible iff $f$ is Max--admissible.

Now assume that $f$ is admissible, and let $\theta \in {\rm Con}(A)$ and $\mu \in {\rm Max}(B/Cg_B(f(\theta )))$, so that $\mu =\psi /Cg_B(f(\theta ))$ for some $\psi \in {\rm Max}(B)\cap [Cg_B(f(\theta )))$ by Lemma \ref{speccat}. Then, by Lemmas \ref{f(theta)} and \ref{speccat}, $f_{(\theta )}^*(\mu )=f_{(\theta )}^*(\psi /Cg_B(f(\theta )))=f^*(\psi )/\theta \in {\rm Max}(A/\theta )$ since $f^*(\psi )\in [\theta )\cap {\rm Max}(A)$, because $\theta \subseteq f^*(\psi )$ and $f$ is Max--admissible. Thus $f_{(\theta )}$ is Max--admissible.\end{proof}

\section{A Few Simple Applications to Subdirectly Irreducible Algebras}
\label{subdirirred}

In this section, we present a small set of applications of the above, that includes known properties, but which here are immediately derived from the previous results. Throughout this section, we shall assume that every algebra $M$ is non--trivial, so that $\Delta _M$ is a proper congruence of $M$.

\begin{theorem} {\rm \cite[Corollary 2, p. 140]{birkhoff}, \cite[Theorem 3, p. 13]{bal}, \cite[Theorem 8.27, p. 139]{blyth}} The algebra $A$ is subdirectly irreducible iff ${\rm Con}(A)\setminus \{\Delta _A\}$ has a minimum.\label{sdirired}\end{theorem}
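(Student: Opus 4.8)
The plan is to use the standard Birkhoff-style dictionary between subdirect representations of $A$ and families of congruences whose intersection is $\Delta_A$. Recall that $A$ is subdirectly irreducible iff, whenever $A$ embeds into a direct product $\prod_{i\in I}A_i$ with $I\neq\emptyset$ in such a way that each composition of the embedding with the canonical projection onto $A_i$ is surjective, at least one of these compositions is an isomorphism. Since ${\rm Con}(A)$ is a complete lattice whose meet is intersection, the central object will be $\mu=\bigcap\{\theta\in{\rm Con}(A)\mid\theta\supsetneq\Delta_A\}\in{\rm Con}(A)$; this intersection is over a non--empty set because, $A$ being non--trivial, $\Delta_A\neq\nabla_A$, so $\nabla_A$ already witnesses that ${\rm Con}(A)\setminus\{\Delta_A\}$ is non--empty.

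For the implication from the existence of a minimum, I would set $\mu=\min({\rm Con}(A)\setminus\{\Delta_A\})$ and take an arbitrary subdirect representation, i.e.\ an embedding $e:A\to\prod_{i\in I}A_i$ with $\pi_i\circ e$ surjective for each projection $\pi_i$. Put $\theta_i={\rm Ker}(\pi_i\circ e)\in{\rm Con}(A)$. Since $e=(\pi_i\circ e)_{i\in I}$, it is immediate from the definition of the kernel that $\bigcap_{i\in I}\theta_i={\rm Ker}(e)=\Delta_A$, the last equality because $e$ is injective. If every $\theta_i$ were distinct from $\Delta_A$, then each would satisfy $\theta_i\supseteq\mu$, whence $\Delta_A=\bigcap_{i\in I}\theta_i\supseteq\mu\supsetneq\Delta_A$, a contradiction. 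Hence some $\theta_i=\Delta_A$, so $\pi_i\circ e$ is an injective surjective morphism, therefore an isomorphism, and $A$ is subdirectly irreducible.

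For the converse I would argue contrapositively: assuming ${\rm Con}(A)\setminus\{\Delta_A\}$ has no minimum, I first show $\mu=\Delta_A$. Indeed $\mu$ is a lower bound of ${\rm Con}(A)\setminus\{\Delta_A\}$, so were $\mu\neq\Delta_A$ it would belong to that set and be its minimum, contradicting the hypothesis. Thus the family $(\theta_\lambda)_{\lambda\in\Lambda}$ of all congruences strictly above $\Delta_A$ satisfies $\bigcap_{\lambda\in\Lambda}\theta_\lambda=\Delta_A$ with $\Lambda\neq\emptyset$. The tupling morphism $A\to\prod_{\lambda\in\Lambda}A/\theta_\lambda$, $a\mapsto(p_{\theta_\lambda}(a))_{\lambda\in\Lambda}$, has kernel $\bigcap_{\lambda\in\Lambda}\theta_\lambda=\Delta_A$, hence is an embedding, and each $p_{\theta_\lambda}$ is surjective, so this exhibits $A$ as a subdirect product. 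Since every $\theta_\lambda\neq\Delta_A={\rm Ker}(p_{\theta_\lambda})$, no projection $p_{\theta_\lambda}$ is injective, so none is an isomorphism, and $A$ is not subdirectly irreducible.

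The main obstacle is not a calculation but making the two translations cleanly: that a subdirect embedding is the same datum as a family of congruences meeting to $\Delta_A$ (via the elementary identity that the kernel of a tupling map is the intersection of the component kernels), and that ``no minimum'' forces the meet of all non--trivial congruences down to $\Delta_A$. Both steps are short once the completeness of ${\rm Con}(A)$ is invoked, so the proof reduces to recording these observations carefully.
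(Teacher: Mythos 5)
Your proof is correct and is precisely the classical argument: the paper does not prove this theorem itself but cites it from Birkhoff, Balbes--Dwinger and Blyth, and your proposal reproduces the standard proof from those sources, via the dictionary between subdirect embeddings and families of congruences intersecting to $\Delta _A$, together with the observation that the monolith $\bigcap ({\rm Con}(A)\setminus \{\Delta _A\})$ either is the minimum or collapses to $\Delta _A$. Note that the non--emptiness of ${\rm Con}(A)\setminus \{\Delta _A\}$, which you correctly flag, is guaranteed here by the paper's standing assumption in this section that all algebras are non--trivial.
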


For example, ${\cal L}_2$, ${\cal D}$, ${\cal P}$, the lattice $E$ in Example \ref{fiectip} and the lattice $N$ in Example \ref{latn} are subdirectly irreducible.

\begin{corollary}\begin{enumerate}
\item\label{deltasdired1} $A$ is subdirectly irreducible iff $\Delta _A$ is strictly meet--irreducible in the lattice ${\rm Con}(A)$.
\item\label{deltasdired0} If ${\cal C}$ is congruence--modular, the commutator in $A$ equals the intersection and $A$ is subdirectly irreducible, then $\Delta _A\in {\rm Spec}(A)$.
\item\label{deltasdired2} If ${\cal C}$ is congruence--distributive and $A$ is subdirectly irreducible, then $\Delta _A\in {\rm Spec}(A)$. 
\item\label{deltasdired4} If ${\cal C}$ is congruence--modular, the commutator in $A$ equals the intersection and ${\rm Con}(A)$ is finite, then: $A$ is subdirectly irreducible iff $\Delta _A\in {\rm Spec}(A)$ iff $\Delta _A$ is meet--irreducible.
\item\label{deltasdired3} If ${\cal C}$ is congruence--distributive and ${\rm Con}(A)$ is finite, then: $A$ is subdirectly irreducible iff $\Delta _A\in {\rm Spec}(A)$ iff $\Delta _A$ is meet--irreducible.\end{enumerate}\label{deltasdired}\end{corollary}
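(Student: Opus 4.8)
The plan is to derive all five items from Theorem \ref{sdirired}, the description of strict meet--irreducibility in Remark \ref{strictmeetirred}, and Corollary \ref{mypartic}. The single observation that makes everything fall into place is that $\Delta _A$ is the least element of the lattice ${\rm Con}(A)$, so that $\{\gamma \in {\rm Con}(A)\ |\ \Delta _A<\gamma \}={\rm Con}(A)\setminus \{\Delta _A\}$.

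First I would settle (\ref{deltasdired1}). By Remark \ref{strictmeetirred}, $\Delta _A$ is strictly meet--irreducible in ${\rm Con}(A)$ exactly when $\min \{\gamma \in {\rm Con}(A)\ |\ \Delta _A<\gamma \}$ exists; by the set equality above, this is precisely the assertion that ${\rm Con}(A)\setminus \{\Delta _A\}$ has a minimum, which by Theorem \ref{sdirired} is equivalent to the subdirect irreducibility of $A$.

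Items (\ref{deltasdired0}) and (\ref{deltasdired2}) then follow immediately: by (\ref{deltasdired1}) the subdirect irreducibility of $A$ means that $\Delta _A$ is strictly meet--irreducible, and Corollary \ref{mypartic}, (\ref{mypartic1}), yields $\Delta _A\in {\rm Spec}(A)$ under the hypotheses of either statement. Here (\ref{deltasdired2}) is just the congruence--distributive case of (\ref{deltasdired0}), since by Theorem \ref{distrib} congruence--distributivity forces the commutator to coincide with the intersection.

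Finally, for (\ref{deltasdired4}) and (\ref{deltasdired3}) I would combine (\ref{deltasdired1}) with Corollary \ref{mypartic}, (\ref{mypartic4}), applied to $\phi =\Delta _A$: when ${\rm Con}(A)$ is finite, $\Delta _A\in {\rm Spec}(A)$ iff $\Delta _A$ is strictly meet--irreducible iff $\Delta _A$ is proper and meet--irreducible. The only subtlety is the word ``proper'': under the standing assumption of this section that every algebra is non--trivial, we have $\Delta _A\neq \nabla _A$, so $\Delta _A$ is automatically proper and the last clause collapses to ``$\Delta _A$ is meet--irreducible''; chaining this with (\ref{deltasdired1}) gives the stated triple equivalence, and (\ref{deltasdired3}) is again the congruence--distributive specialization of (\ref{deltasdired4}) via Theorem \ref{distrib}. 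There is essentially no obstacle here: the whole content is the identification carried out in the first step, after which each item is a direct invocation of Corollary \ref{mypartic}.
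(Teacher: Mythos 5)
Your proof is correct and follows exactly the paper's own route: item (\ref{deltasdired1}) via Theorem \ref{sdirired} (using that $\Delta _A$ is the bottom of ${\rm Con}(A)$, so $\{\gamma \ |\ \Delta _A<\gamma \}={\rm Con}(A)\setminus \{\Delta _A\}$), items (\ref{deltasdired0}) and (\ref{deltasdired2}) via Corollary \ref{mypartic}, (\ref{mypartic1}), and items (\ref{deltasdired4}) and (\ref{deltasdired3}) via Corollary \ref{mypartic}, (\ref{mypartic4}). Your explicit handling of the word ``proper'' through the section's standing non--triviality assumption is a detail the paper leaves implicit, but it is the right justification and does not constitute a different approach.
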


\begin{proof} (\ref{deltasdired1}) By Theorem \ref{sdirired}.

\noindent (\ref{deltasdired0}), (\ref{deltasdired2}) By (\ref{deltasdired1}) and Corollary \ref{mypartic}, (\ref{mypartic1}).

\noindent (\ref{deltasdired4}), (\ref{deltasdired3}) By (\ref{deltasdired1}) and Corollary \ref{mypartic}, (\ref{mypartic4}).\end{proof}

\begin{remark} Assume that ${\cal C}$ is congruence--modular and the commutator in $A$ equals the intersection, or that ${\cal C}$ is congruence--distributive. Then, by Corollary \ref{deltasdired}, (\ref{deltasdired0}) and (\ref{deltasdired2}), if $A$ is subdirectly irreducible and $|A|\geq 3$, then $\Delta _A\in {\rm Spec}(A)$ and $|A/\Delta _A|=|A|\geq 3$, thus $\Delta _A\notin {\rm Con}_2(A)$, hence ${\rm Spec}(A)\nsubseteq {\rm Con}_2(A)$. By Corollary \ref{mysuper}, it follows that no bounded distributive lattice of cardinality at least $3$ can be subdirectly irreducible.\end{remark}

We recall that an equational class ${\cal V}$ is said to be {\em congruence--extensible} iff, for any member $M$ of ${\cal V}$ and any subalgebra $S$ of $M$, any congruence of $S$ extends to a congruence of $M$, that is, for any $\sigma \in {\rm Con}(S)$, there exists a $\mu \in {\rm Con}(M)$ such that $\mu \cap S^2=\sigma $. For instance, the class of lattices is congruence--extensible (\cite{bal}, \cite{birkhoff}, \cite{blyth}, \cite{cwdw}, \cite{gratzer}, \cite{schmidt}).

\begin{corollary} Assume that $A$ is subdirectly irreducible, and let $S$ be a subalgebra of $A$, such that the canonical embedding of $S$ into $A$ is admissible. Assume, furthermore, that ${\cal C}$ is congruence--distributive, or that ${\cal C}$ is congruence--modular and the commutator in $A$ and $S$ equals the intersection. Then:\begin{enumerate}
\item\label{embedsdired1} if ${\rm Con}(A)$ and ${\rm Con}(S)$ are finite, then $S$ is subdirectly irreducible;
\item\label{embedsdired2} if $A$ is finite, then $S$ is subdirectly irreducible;
\item\label{embedsdired0} if ${\rm Con}(A)$ is finite and any congruence of $S$ extends to a congruence of $A$, then $S$ is subdirectly irreducible.
\item\label{embedsdired3} if ${\rm Con}(A)$ is finite and ${\cal C}$ is congruence--extensible, then $S$ is subdirectly irreducible.\end{enumerate}\label{embedsdired}\end{corollary}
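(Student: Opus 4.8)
The plan is to reduce all four parts to the single fact that $\Delta_S\in{\rm Spec}(S)$, and then to supply, in each part, the finiteness of ${\rm Con}(S)$ that upgrades this to subdirect irreducibility of $S$. Write $i\colon S\rightarrow A$ for the canonical embedding, which is admissible by hypothesis. Since ${\cal C}$ is congruence--distributive or congruence--modular, it is in any case congruence--modular, so admissibility is meaningful; moreover, by Theorem \ref{distrib} in the distributive case and by assumption in the modular case, the commutator equals the intersection in both $A$ and $S$ (recall that $S\in{\cal C}$, as ${\cal C}$ is an equational class and hence closed under subalgebras). First I would observe that, since $A$ is subdirectly irreducible, Corollary \ref{deltasdired}, (\ref{deltasdired0}) and (\ref{deltasdired2}), give $\Delta_A\in{\rm Spec}(A)$. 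Because $i$ is the inclusion, ${\rm Ker}(i)=i^*(\Delta_A)=\Delta_S$, so admissibility of $i$ applied to $\Delta_A\in{\rm Spec}(A)$ yields $\Delta_S=i^*(\Delta_A)\in{\rm Spec}(S)$. This is the common core of all four parts.

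For part (\ref{embedsdired1}), ${\rm Con}(S)$ is finite, so I would apply Corollary \ref{deltasdired}, (\ref{deltasdired4}) and (\ref{deltasdired3}), to $S$ in place of $A$: under our commutator hypothesis and with ${\rm Con}(S)$ finite, $\Delta_S\in{\rm Spec}(S)$ is equivalent to $S$ being subdirectly irreducible, which finishes this part. (The hypothesis that ${\rm Con}(A)$ is finite is not actually used here; only finiteness of ${\rm Con}(S)$ is needed.)

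The remaining three parts I would deduce from part (\ref{embedsdired1}) by producing the finiteness of ${\rm Con}(S)$ in each case. For part (\ref{embedsdired2}), if $A$ is finite then $S\subseteq A$ is finite, hence both ${\rm Con}(A)$ and ${\rm Con}(S)$ are finite and part (\ref{embedsdired1}) applies. For part (\ref{embedsdired0}), the key observation is that for every $\mu\in{\rm Con}(A)$ one has $i^*(\mu)=\mu\cap S^2$; hence the hypothesis that every congruence of $S$ extends to $A$ says precisely that $i^*({\rm Con}(A))={\rm Con}(S)$, so ${\rm Con}(S)$ is a surjective image of the finite set ${\rm Con}(A)$ and is therefore finite, again reducing to part (\ref{embedsdired1}). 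Finally, part (\ref{embedsdired3}) is immediate from part (\ref{embedsdired0}): congruence--extensibility of ${\cal C}$ specializes, for the subalgebra $S$ of $A$, to the statement that every congruence of $S$ extends to a congruence of $A$.

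Since the argument is essentially a chain of previously established results, I expect no serious obstacle; the one point requiring care is that $\Delta_S\in{\rm Spec}(S)$ by itself only guarantees that $\Delta_S$ is meet--irreducible (Corollary \ref{mypartic}, (\ref{mypartic2})), whereas subdirect irreducibility of $S$ demands the \emph{strict} meet--irreducibility of $\Delta_S$ (Corollary \ref{deltasdired}, (\ref{deltasdired1})). Bridging this gap is exactly what forces a finiteness condition on ${\rm Con}(S)$, and the real content of the proof is recognizing that each of the four sets of hypotheses delivers that finiteness --- most subtly in part (\ref{embedsdired0}), where it comes from the surjectivity of $i^*$ onto ${\rm Con}(S)$ encoded in the extension condition.
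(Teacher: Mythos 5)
Your proposal is correct and follows essentially the same route as the paper's proof: both reduce everything to obtaining $\Delta_S=i^*(\Delta_A)\in{\rm Spec}(S)$ via admissibility of the embedding and then invoke the finite--case equivalences of Corollary \ref{deltasdired} for $S$, with parts (\ref{embedsdired2})--(\ref{embedsdired3}) reduced to part (\ref{embedsdired1}) by supplying finiteness of ${\rm Con}(S)$ exactly as the paper does (your surjectivity-of-$i^*$ argument is just the explicit form of the paper's ``clear fact'' in (\ref{embedsdired0})). Your side remark that finiteness of ${\rm Con}(A)$ is not actually needed in part (\ref{embedsdired1}), since $\Delta_A\in{\rm Spec}(A)$ already follows from Corollary \ref{deltasdired}, (\ref{deltasdired0}) and (\ref{deltasdired2}), is a valid minor sharpening of the stated hypotheses.
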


\begin{proof} Let $i:S\rightarrow A$ be the canonical embedding. The fact that $i$ is admissible means that $i^*(\phi )=\phi \cap S^2\in {\rm Spec}(S)$ for all $\phi \in {\rm Spec}(A)$.

\noindent (\ref{embedsdired1}) By Corollary \ref{deltasdired}, (\ref{deltasdired3}), and the fact that $i^*(\Delta _A)=\Delta _S$.

\noindent (\ref{embedsdired2}) By (\ref{embedsdired1}).

\noindent (\ref{embedsdired0}) By (\ref{embedsdired1}) and the clear fact that, in this case, ${\rm Con}(S)$ is finite, as well. 

\noindent (\ref{embedsdired3}) By (\ref{embedsdired0}).\end{proof}

We have seen in Lemmas \ref{folclor} and \ref{specdistrib} some situations in which ${\rm Max}(A)\subseteq {\rm Spec}(A)$. Let us take a quick look at the converse inclusion.

\begin{remark} Clearly, $\Delta _A\in {\rm Max}(A)$ iff ${\rm Con}(A)=\{\Delta _A,\nabla _A\}$.\label{deltamax}\end{remark}

\begin{corollary} Assume, that ${\cal C}$ is congruence--distributive, or ${\cal C}$ is congruence--modular and the commutator in $A$ equals the intersection. If $A$ is subdirectly irreducible and ${\rm Con}(A)\supsetneq \{\Delta _A,\nabla _A\}$, then $\Delta _A\in {\rm Spec}(A)\setminus {\rm Max}(A)$, so ${\rm Spec}(A)\nsubseteq {\rm Max}(A)$.\end{corollary}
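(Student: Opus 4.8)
The plan is to assemble the statement from two ingredients already established in this section. First I would invoke Corollary \ref{deltasdired}: under either of the two hypotheses on the commutator (namely ${\cal C}$ congruence--distributive, handled by part (\ref{deltasdired2}), or ${\cal C}$ congruence--modular with the commutator in $A$ equal to the intersection, handled by part (\ref{deltasdired0})), the subdirect irreducibility of $A$ already forces $\Delta _A\in {\rm Spec}(A)$. This is the substantive half of the claim, and it requires no new work here since both cases are precisely covered by the indicated parts of Corollary \ref{deltasdired}.

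Second I would rule out maximality of $\Delta _A$ using Remark \ref{deltamax}, which states that $\Delta _A\in {\rm Max}(A)$ holds if and only if ${\rm Con}(A)=\{\Delta _A,\nabla _A\}$. The hypothesis ${\rm Con}(A)\supsetneq \{\Delta _A,\nabla _A\}$ says exactly that this equality fails, so the contrapositive of Remark \ref{deltamax} yields $\Delta _A\notin {\rm Max}(A)$.

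Combining the two steps gives $\Delta _A\in {\rm Spec}(A)\setminus {\rm Max}(A)$, and the final assertion ${\rm Spec}(A)\nsubseteq {\rm Max}(A)$ is then immediate, since ${\rm Spec}(A)$ contains an element, namely $\Delta _A$, that lies outside ${\rm Max}(A)$. There is no genuine obstacle to overcome: the result is a direct corollary, and the only thing to be careful about is matching each of the two commutator hypotheses to the correct sub--case of Corollary \ref{deltasdired} (congruence--distributivity to (\ref{deltasdired2}), commutator equal to the intersection to (\ref{deltasdired0})), and reading the strict inclusion in the hypothesis as the negation of the equality appearing in Remark \ref{deltamax}.
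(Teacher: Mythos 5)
Your proof is correct and takes essentially the same route as the paper, whose entire proof is ``By Corollary \ref{deltasdired}, (\ref{deltasdired3}), and Remark \ref{deltamax}'' --- more precisely, it cites Corollary \ref{deltasdired}, (\ref{deltasdired2}), together with Remark \ref{deltamax}, exactly your two ingredients. If anything you are marginally more complete, since you explicitly match the congruence--modular hypothesis to part (\ref{deltasdired0}) of Corollary \ref{deltasdired}, a sub--case the paper's one--line proof leaves implicit.
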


\begin{proof} By Corollary \ref{deltasdired}, (\ref{deltasdired2}), and Remark \ref{deltamax}.\end{proof}

\begin{example} An example in the class of bounded lattices for the situation in the previous corollary is the pentagon, which is subdirectly irreducible, but has ${\rm Con}({\cal P})\supsetneq \{\Delta _{\cal P},\nabla _{\cal P}\}$, and, as we have seen in Example \ref{fiectip}, it has ${\rm Spec}({\cal P})=\{\Delta _{\cal P}\}\cup {\rm Max}({\cal P})\nsubseteq {\rm Max}({\cal P})$.\label{expentagon}\end{example}

\begin{theorem}{\rm \cite[Theorem 3.3.1, p. 65]{schmidt}, \cite[Theorem 17, p. 81]{gratzer}} Any finite distributive lattice is isomorphic to the congruence lattice of some finite lattice.\label{conlatfin}\end{theorem}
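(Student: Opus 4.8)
The plan is to reduce the statement, via Birkhoff duality, to a purely combinatorial problem about join--irreducible congruences, and then to solve that problem by an explicit gluing construction built out of pentagons.

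First, recall Birkhoff's representation theorem: every finite distributive lattice $D$ is isomorphic to the lattice $\mathcal{O}(J(D))$ of down--sets of the poset $J(D)$ of its join--irreducible elements, ordered by inclusion. On the other hand, since lattices are congruence--distributive, for every finite lattice $L$ the congruence lattice ${\rm Con}(L)$ is itself a finite distributive lattice, so ${\rm Con}(L)\cong \mathcal{O}(J({\rm Con}(L)))$. Consequently it suffices to construct, for the given finite poset $P:=J(D)$, a finite lattice $L$ for which there is an order isomorphism $J({\rm Con}(L))\cong P$; composing the three isomorphisms then yields ${\rm Con}(L)\cong D$. I would then translate the requirement $J({\rm Con}(L))\cong P$ into the language of prime intervals: in a finite lattice every join--irreducible congruence is principal, of the form $Cg_L(a,b)$ for some covering pair $a\prec b$, and the order $Cg_L(a,b)\subseteq Cg_L(c,d)$ is the reflexive--transitive closure of the prime--projectivity relation between covering pairs. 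So the goal becomes to build $L$ whose covering pairs, identified when they generate the same congruence, form a poset order--isomorphic to $P$ under this forcing order.

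The construction itself exploits the distinction between two sorts of operations. The two operations already analysed in this paper only enlarge the join--irreducible poset by disjoint union: by Proposition \ref{ordsum}, (\ref{ordsum1}), together with Remark \ref{distribprod}, both finite ordinal sums and finite direct products send ${\rm Con}$ to the product of the congruence lattices, hence send $J({\rm Con})$ to the disjoint union of the $J({\rm Con})$'s, creating no new comparabilities. The device that does create a strict comparability is the pentagon: the computation in Example \ref{fiectip} shows that $J({\rm Con}(\mathcal{P}))$ consists of three congruences with one of them, $\gamma $, strictly below the other two, so a pentagon forces one join--irreducible congruence beneath another while introducing no further collapses. I would therefore assign to each $p\in P$ a generating covering pair carrying a join--irreducible congruence $\theta _p$, and to each covering relation $q\lessdot p$ of $P$ a pentagon--shaped gadget forcing $\theta _q\subseteq \theta _p$; these gadgets are amalgamated over a common skeleton so that the only prime--projectivities created are those prescribed by the covering relations of $P$, whence the induced order on the $\theta _p$ is the transitive closure, i.e. the order of $P$.

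The main obstacle is the verification that the resulting $L$ has exactly the intended congruence lattice --- that the $\theta _p$ are its only join--irreducible congruences, that they are pairwise distinct, and, above all, that the forcing order they generate is precisely the order of $P$ with no accidental extra collapses. This is a delicate local--to--global analysis of how principal congruences propagate across the glued pentagons, and it is where the genuine combinatorial content of Dilworth's theorem resides.
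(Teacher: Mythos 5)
The paper does not prove Theorem \ref{conlatfin} at all --- it is quoted from the literature (Schmidt; Gr\"atzer), so there is no internal proof to compare against; your proposal must therefore stand on its own as a proof of the Dilworth representation theorem, and as such it has a genuine gap. Your reductions are correct and standard: by Birkhoff duality and the Funayama--Nakayama distributivity of ${\rm Con}(L)$ it suffices to realize the finite poset $P=J(D)$ as the poset of join--irreducible congruences of a finite lattice, and in a finite lattice these are exactly the congruences $Cg_L(a,b)$ generated by covering pairs, ordered by (weak) prime projectivity. Your reading of Example \ref{fiectip} is also accurate: $J({\rm Con}({\cal P}))=\{\gamma ,\alpha ,\beta \}$ with $\gamma <\alpha $ and $\gamma <\beta $, so the pentagon is indeed a forcing gadget. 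But the proof proper begins where your text stops. ``Amalgamated over a common skeleton so that the only prime--projectivities created are those prescribed'' is a specification, not a construction: you never define the skeleton, the gluing, or why the glued object is a lattice, and Hall--Dilworth-style pastings of pentagons routinely create unintended weak projectivities between prime intervals of different gadgets. You yourself label the verification that the $\theta _p$ are distinct, exhaust $J({\rm Con}(L))$, and carry no accidental comparabilities as ``the main obstacle'' and then leave it unaddressed --- yet that verification \emph{is} the theorem; everything preceding it is routine.

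There is also a concrete structural flaw in the gadget bookkeeping as proposed. Each pentagon contributes \emph{three} join--irreducible congruences, with $\gamma $ below \emph{two} incomparable ones ($\alpha $ and $\beta $), not below a single one. So a naive ``one pentagon per covering $q\lessdot p$'' scheme manufactures, for every covering relation of $P$, a spurious extra join--irreducible that must somehow be identified with an existing generator or killed; arranging these identifications without collapsing $\theta _q$ into $\theta _p$ or merging distinct $\theta _p$'s is precisely the delicate local--to--global analysis that the known proofs (Gr\"atzer--Schmidt's original construction, or the chopped--lattice approach in Gr\"atzer's book, where congruences of the chopped lattice are first related to congruences of its ideal lattice by a separate lemma) spend most of their length on. In short: your plan is the right plan --- it is essentially the classical strategy behind the cited results --- but as submitted it is an outline of a proof, not a proof, because the amalgamation and the congruence computation for the amalgam are missing.
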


\begin{example} Let $K$ be the following finite distributive lattice:\vspace*{-14pt}

\begin{center}
\begin{picture}(40,80)(0,0)
\put(20,45){\line(1,1){10}}
\put(20,45){\line(-1,1){10}}
\put(20,65){\line(1,-1){10}}
\put(20,65){\line(-1,-1){10}}
\put(20,45){\line(0,-1){15}}
\put(18,68){$1$}
\put(20,30){\circle*{3}}
\put(14,40){$c$}
\put(3,53){$z$}
\put(33,52){$t$}
\put(20,45){\circle*{3}}
\put(10,55){\circle*{3}}
\put(30,55){\circle*{3}}
\put(20,65){\circle*{3}}
\put(20,10){\line(1,1){10}}
\put(20,10){\line(-1,1){10}}
\put(20,30){\line(1,-1){10}}
\put(20,30){\line(-1,-1){10}}
\put(20,10){\line(0,-1){15}}
\put(20,-5){\circle*{3}}
\put(18,-15){$0$}
\put(23,30){$b$}
\put(23,5){$a$}
\put(2,18){$x$}
\put(33,18){$y$}
\put(20,10){\circle*{3}}
\put(10,20){\circle*{3}}
\put(30,20){\circle*{3}}
\put(20,30){\circle*{3}}
\end{picture}\end{center}\vspace*{4pt}

By Theorem \ref{conlatfin}, it follows that there exists a finite lattice $L$ and a bounded lattice isomorphism $h:K\rightarrow {\rm Con}(L)$. Then, obviously, ${\rm Max}(L)=\{h(z),h(t)\}$, while ${\rm Spec}(L)=\{h(0)=\Delta _L,h(b),h(x),h(y),h(z),h(t)\}$, which is easily seen from Corollary \ref{mypartic}, (\ref{mypartic3}), thus $\Delta _L,h(b),h(x),h(y)\in {\rm Spec}(L)\setminus {\rm Max}(L)$. Note that, as shown by Corollary \ref{deltasdired}, (\ref{deltasdired3}), and Remark \ref{auconbool}, the lattice $L$ is subdirectly irreducible and can not be obtained through direct products and/or ordinal sums from modular lattices and relatively complemented lattices.\label{exlatcong}\end{example}

Let us generalize what we have observed in Examples \ref{expentagon} and \ref{exlatcong}:

\begin{remark}\begin{enumerate}
\item If $K$ is a finite distributive lattice, then, by Theorem \ref{conlatfin}, there exists a finite lattice $L$ such that ${\rm Con}(L)$ is isomorphic to ${\cal L}_2\oplus K$, thus $\Delta _L\in {\rm Spec}(L)$ and $L$ is subdirectly irreducible by Corollary \ref{deltasdired}, (\ref{deltasdired3}).
\item If $K$ and $M$ are finite distributive lattices and $c$ is the common element of $K$ and ${\cal L}_2$ in the ordinal sum $K\oplus {\cal L}_2\oplus M$, then, by Theorem \ref{conlatfin}, there exists a finite lattice $L$ and a bounded lattice isomorphism $h:K\oplus {\cal L}_2\oplus M\rightarrow {\rm Con}(L)$, thus, by Corollary \ref{deltasdired}, (\ref{deltasdired2}), $h(c)\in {\rm Spec}(L)$ and, clearly, if $M$ is non--trivial, then $h(c)\notin {\rm Max}(L)$, so ${\rm Spec}(L)\nsubseteq {\rm Max}(L)$.\end{enumerate}\end{remark}

\begin{corollary}\begin{itemize}
\item\label{latsdirired1} There are infinitely many subdirectly irreducible finite lattices.
\item\label{latsdirired2} There are infinitely many finite lattices $L$ with $\Delta _L\in {\rm Spec}(L)$.
\item\label{latsdirired3} There are infinitely many finite lattices $L$ with ${\rm Spec}(L)\nsubseteq {\rm Max}(L)$.\end{itemize}\label{latsdirired}\end{corollary}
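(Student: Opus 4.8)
The plan is to obtain all three assertions directly from the two parts of the Remark immediately preceding this corollary, the sole additional task being to manufacture, in each case, not a single witness but an \emph{infinite} supply of pairwise non--isomorphic finite lattices. Since for a finite lattice $L$ the congruence lattice ${\rm Con}(L)$ is itself finite, and since isomorphic lattices have isomorphic, hence equicardinal, congruence lattices, it will suffice to produce, for each desired property, a family of finite lattices $(L_n)_{n\in \N ^*}$ whose congruence lattices have strictly increasing cardinalities; such a family automatically meets infinitely many isomorphism types.

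First I would dispatch the first two items simultaneously. For each $n\in \N ^*$ I apply part (1) of the preceding Remark to the finite chain ${\cal L}_n$: Theorem \ref{conlatfin} furnishes a finite lattice $L_n$ with ${\rm Con}(L_n)\cong {\cal L}_2\oplus {\cal L}_n$, a chain of cardinality $n+1$, and Corollary \ref{deltasdired}, (\ref{deltasdired3}), yields at once that $\Delta _{L_n}\in {\rm Spec}(L_n)$ and that $L_n$ is subdirectly irreducible. Because $|{\rm Con}(L_n)|=n+1$ is strictly increasing in $n$, the $L_n$ are pairwise non--isomorphic, so this one family witnesses both the existence of infinitely many subdirectly irreducible finite lattices and the existence of infinitely many finite lattices $L$ with $\Delta _L\in {\rm Spec}(L)$.

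For the third item I would instead invoke part (2) of the Remark with the non--trivial choice $M={\cal L}_2$ and $K={\cal L}_n$: Theorem \ref{conlatfin} gives a finite lattice $L_n$ together with a bounded lattice isomorphism $h:{\cal L}_n\oplus {\cal L}_2\oplus {\cal L}_2\rightarrow {\rm Con}(L_n)$, and, writing $c$ for the common element of ${\cal L}_n$ and the middle copy of ${\cal L}_2$ in this ordinal sum, part (2) gives $h(c)\in {\rm Spec}(L_n)\setminus {\rm Max}(L_n)$, whence ${\rm Spec}(L_n)\nsubseteq {\rm Max}(L_n)$. Here $|{\rm Con}(L_n)|=n+2$ is again strictly increasing, so the $L_n$ are pairwise non--isomorphic and we obtain infinitely many such lattices.

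I do not expect any serious obstacle: once the preceding Remark is granted, the whole content of the corollary is the passage from one example to infinitely many, and that reduces to the elementary cardinality observation recorded in the first paragraph. The only point demanding the slightest care is the choice of infinite families of finite distributive lattices of distinct cardinalities, chains being the most economical, so that Theorem \ref{conlatfin} applies uniformly across the family.
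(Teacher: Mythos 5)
Your proposal is correct and follows essentially the same route as the paper, which derives the corollary (without spelling out a proof) from the preceding Remark together with Theorem \ref{conlatfin}; the only content you add is the explicit instantiation with chains ${\cal L}_n$ and the observation that strictly increasing $|{\rm Con}(L_n)|$ forces the witnesses to be pairwise non--isomorphic, a detail the paper leaves implicit but which is exactly the intended argument.
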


\section*{Acknowledgements}

The author wishes to thank Professor Erhard Aichinger for his contribution to Proposition \ref{erhardgen}.

\end{document}